\newtheorem{thm}{Theorem}[section]
\newtheorem*{thm*}{Theorem}
\newtheorem{lem}[thm]{Lemma}
\theoremstyle{definition}
\newtheorem{defn}{Definition}[section]
\theoremstyle{remark}
\newcommand{\E}{\mathbb{E}}
\newcommand{\Z}{\mathbb{Z}}
\newcommand{\bP}{\mathbb{P}}
\newcommand{\sn}{\sqrt{n}}
\newcommand{\sm}{\sqrt{m}}
\newcommand{\given}{\;\middle|\;}
\DeclareMathOperator\erf{erf}
\title{Proof of the Contiguity Conjecture and Lognormal Limit for the Symmetric Perceptron
}
\author{Emmanuel Abbe \thanks{Institute of Mathematics, EPFL, Lausanne, CH-1015, Switzerland. Email: emmanuel.abbe@epfl.ch.} \and Shuangping Li \thanks{PACM, Princeton University, Princeton, NJ, 08544, USA. Email: sl31@princeton.edu.} \and Allan Sly \thanks{Department of Mathematics, Princeton University, Princeton, NJ, 08544, USA. Email: allansly@princeton.edu.}}
\date{}
\begin{document}
\maketitle

\begin{abstract}
We consider the symmetric binary perceptron model, a simple model of neural networks that has gathered significant attention in the statistical physics, information theory and probability theory communities, with recent connections made to the performance of learning algorithms in Baldassi et al.\ '15.

We establish that the partition function of this model, normalized by its expected value, converges to a lognormal distribution. As a consequence, this allows us to establish several conjectures for this model: (i) it proves the contiguity conjecture of Aubin et al.\ '19 between the planted and unplanted models in the satisfiable regime; (ii) it establishes the sharp threshold conjecture; (iii) it proves the frozen 1-RSB conjecture in the symmetric case, conjectured first by Krauth-M\'ezard '89 in the asymmetric case. 

In a recent work of Perkins-Xu \cite{perkins2021frozen}, the last two conjectures were also established by proving that the partition function concentrates on an exponential scale, under an analytical assumption on a real-valued function. This left open the contiguity conjecture and the lognormal limit characterization, which are established here unconditionally, with the analytical assumption verified. In particular, our proof technique relies on a dense counter-part of the  small graph conditioning method, which was developed for sparse models in the celebrated work of Robinson and Wormald.     
\end{abstract}

\section{Introduction}
The binary perceptron is a simple model used to study the structural properties of zero loss solutions in neural networks. It was introduced in the 60s by Cover\footnote{Mainly for the spherical case.} \cite{cover1965geometrical} and in the 80s in the statistical physics literature with detailed characterizations put forward by Gardner and Derrida \cite{gardner1988optimal} and Krauth and M\'ezard \cite{krauth1989storage}. More recently, the structural properties of its solution space have been related to the behavior of algorithms for learning neural networks in \cite{baldassi2016unreasonable,baldassi2016local,braunstein2006learning,baldassi2015subdominant} and several probabilistic results have been established in \cite{kim1998covering,talagrand1999intersecting,stojnic2013discrete,ding2019capacity,aubin2019storage} (see further discussions below). 

There exist several model variants and questions motivated by both memorization \cite{cover1965geometrical,gardner1988optimal,krauth1989storage} and generalization properties \cite{opper}. We focus here on the capacity (or memorization) problem\footnote{We refer to \cite{cover1965geometrical,gardner1988optimal} for relations between the capacity and the memorization capability of neural networks.}, the symmetric binary (or Ising) model  and the constrained satisfaction point of view.  

Namely, let $G$ be an $m$ by $n$ matrix with i.i.d.\ entries taking value in $\{+1,-1\}$ with equal probability. Fix a positive real number $\kappa$, and consider the following constraints:  
\begin{align*}
S_j(G):=\left\{X\in\{-1,+1\}^n: \frac{1}{\sqrt{n}}\left|\sum_{i=1}^n
	G_{j,i} X_i\right|
	\leq\kappa \right \}, \quad j=1,\cdots,m.
\end{align*}
The capacity problem is concerned by characterizing regimes of $m$ and $n$ for which solutions to the above constraints exist, i.e., for which  
\begin{align*}
    S^{m}(G):=\bigcap_{j=1}^m S_j(G)\neq \emptyset.
\end{align*}
More precisely, the capacity
 is defined by\footnote{Note that $m_{c,\kappa}(n)$ is a random variable that is a function of $G$.} 
\begin{align}
    m^*_{\kappa}(n) : = \max\{ m \ge 1 : S^m(G) \ne \emptyset\} \,.
\end{align}
In the asymmetric variant of the model, originally studied in \cite{cover1965geometrical,gardner1988optimal,krauth1989storage}, the constraints are given by $
\tilde{S}_j(G):=\left\{X\in\{-1,+1\}^n: \frac{1}{\sqrt{n}}\sum_{i=1}^n
	G_{j,i} X_i
	\geq\kappa \right \}$ for $1 \leq j\leq m$
with similar definitions for $\tilde{S}^m$ and $\tilde{m}^*$. Compared to the asymmetric binary perceptron (ABP), the symmetric binary perceptron (SBP) captures all of the conjectural behaviors of the perceptron but it is mathematically more tractable (see further discussions below).

Despite the simplicity of definitions,  several long-standing conjectures remain as well as intriguing connections to learning problems.  
\begin{itemize}
\item {\bf Capacity sharp threshold conjecture.} 
The simplest structural property of the solution space is the non-emptiness. 
In \cite{krauth1989storage}, it was conjectured that for the asymmetric binary perceptron (ABP), $m^*_{\kappa}(n)/n$ converges to an explicit constant $\alpha^*_\kappa$ as $n$ diverges,  equivalently given by $\alpha^*_\kappa = \sup \{ \alpha \ge 0: \lim \inf_{n \to \infty} \mathbb{P} (S^{\lfloor \alpha n \rfloor}(G) \ne \emptyset) =1 \}$. More specifically, it is conjectured that a sharp threshold phenomenon takes place at $\alpha^*_\kappa$, with the existence and absence of solutions taking place with high probability below and above capacity. 

This conjecture was extended to the symmetric binary perceptron (SBP) model in  \cite{aubin2019storage}, with a partial result obtained in this case, showing that solutions do not exist with high probability above the capacity, and exist with positive probability below capacity. Such a positive probability result was also recently obtained by  \cite{ding2019capacity} in the more challenging setting of ABP. However, in both cases, a second moment method yields a ratio between the second moment and first moment squared that is not $1$ but a positive constant, preventing to establish the high-probability result below capacity. 

To settle the sharp threshold conjecture for SBP, a possibility is to combine the result of Xu \cite{xu2019sharp}, which is currently written for the ABP setting, with the result of \cite{aubin2019storage} obtained for the SBP setting. However, this would not allow to settle the next conjecture.   

\item {\bf Contiguity conjecture.} Proving the existence of solutions below capacity with high probability follows from a stronger property of the solution space. Namely, that the distribution of the matrix $G$ (the random model) is contiguous to the distribution of the matrix $G$ in the planted model, i.e., in the model where the matrix $G$ is drawn conditioned on it satisfying a planted solution that is drawn uniformly at random. In particular, this contiguity conjecture is stated in \cite{aubin2019storage} for the SBP. Note that an additional consequence of contiguity is that there is no algorithm that can distinguish between the random and planted models with high probability.  

Interestingly, this characterization may appear to be non-consequential for other topological properties of the solution space, but it turns out to be crucial to establish a striking topological property - the strong freezing of the solutions below capacity - discussed next.  

\item {\bf Strong freezing conjecture.} 
This phenomenon was originally discovered by  \cite{krauth1989storage} and subsequently strengthen in  \cite{huang2013entropy} and \cite{huang2014origin} (see also \cite{zdeborova2008constraint,zdeborova2008locked,zdeborova2011quiet} for general CSPs). It lies at the core of the early statistical physics study of the perceptron model. 

In the case of sparse random CSPs, an important role is played by two topological thresholds involving clustering and freezing. The clustering property asserts that at sufficiently large constraint density, clusters of connected solutions break apart into exponentially many components separated by linear distance. This was described precisely in  \cite{krzakala2007gibbs} and proved in \cite{achlioptas2008algorithmic} by analyzing the planted model.
The second topological property is freezing, where in a typical cluster of solutions a linear number of variables are frozen, i.e., they take the same value in every solution in the cluster. In the case of coloring on sparse random graphs, the freezing threshold was established in \cite{molloy2012freezing}. Both of these thresholds occur well before the satisfiability thresholds in a range of sparse random CSPs such as $k$-SAT, $k$-NAESAT and random colourings.  Moreover, these thresholds are conjectured to characterize the limit of efficient algorithms.

Interestingly, in the perceptron models, the freezing property - called frozen 1-RSB - has been conjectured to take place at all positive density and not just `close' to capacity.  In particular,  \cite{krauth1989storage} conjectured that for ABP, typical solutions belong to clusters of vanishing entropy density (i.e., with $2^{o(n)}$ solutions at  linear distance), whereas~\cite{huang2013entropy} conjectured that such solutions are indeed isolated. The latter conjecture was extended to the SBP model in \cite{aubin2019storage} and \cite{baldassi2020clustering}, making the SBP model as interesting as the ABP one for such structural properties.  

\item {\bf Small graph conditioning for dense models.}
A possible approach to tackle the previous conjecture would be to obtain an analog of the small graph conditioning method in the dense ABP model considered here.  For sparse random constraint satisfaction problems, the small graph conditioning method has been used to establish contiguity for the planted distribution and determine the asymptotic law of the partition function.  This method is based around counting small cycles in the graph and accounting for their influence on $Z$.  The perceptron model is a naturally dense system but in Section~\ref{ss:smallgraph} we construct the right analogue of the small graph conditioning method by summing over products of matrix entries around cycles.

\item {\bf Learning algorithms} The latter conjectures have pointed to an interesting and novel phenomenon for learning algorithms on neural networks \cite{baldassi2015subdominant}. On the one hand, the perceptron model (symmetric or not) is conjectured to be `typically hard', as most solutions are completely frozen. On the other hand, efficient algorithms have been shown empirically to succeed in finding solutions, suggesting - if the freezing conjecture turned out to be correct - that such algorithms find atypical solutions, i.e., are part of rare clusters with atypical structural properties. This was shown empirically\footnote{See also \cite{kim1998covering} for  majority vote algorithms and extension of \cite{kim1998covering} to (symmetric) perceptrons with $\kappa>0$ in \cite{elmahdi}} in \cite{braunstein2006learning} with message passing algorithms. Further connections to learning algorithms are given in \cite{braunstein2006learning,baldassi2015subdominant}.

\end{itemize}

{\bf Our contribution.} This paper proves the above conjectures for the SBP model (sharp threshold, contiguity and strong freezing). Furthermore, it derives these results by obtaining an explicit characterization of the partition function, showing that the number of solutions normalized by its expectation tends in distribution to a lognormal distribution whose moments are characterized: for any $\alpha$ below an explicit threshold, 
\begin{align}
       \frac{Z(G)}{\E(Z(G))} \,\,\, \xrightarrow{d} \,\,\, \mathrm{Lognormal}\left( \mu(\kappa), \sigma^2(\kappa) \right),
\end{align}
where $\mu(\kappa),\sigma^2(\kappa)$ are given in Section \ref{ss:maintheorem}. We expect these methods to be more widely applicable to a range of dense constraint satisfaction problems.

While writing this paper, we became aware of the recent work \cite{perkins2021frozen}, which shows that the partition function of the symmetric perceptron concentrates on an exponential scale, using an inductive argument as constraints are added, and contingent on an analytical assumption about a real-valued function. This is sufficient to imply the sharp threshold and freezing conjectures (contingent to the assumption), but leaves open the contiguity conjecture and the lognormal limit characterization, which are established here by characterizing the distribution of the partition function using a counter-part of the small graph conditioning method. We further establish the analytical assumption to obtain unconditional results.

\section{Main Results}
\subsection{Main Theorem}\label{ss:maintheorem}
Define the probability
\begin{align*}
    P_\kappa=\bP(|N|\leq \kappa),
\end{align*}
where $N$ follows the standard normal distribution and define the capacity density
\begin{align*}
    \alpha_c(\kappa)=-\frac{\log 2}{\log P_\kappa}.
\end{align*}
We denote $Z(G,m,n):=|S(G)|$ as the number of solutions to the symmetric binary perceptron problem and write $Z(G)$ in short when $m$ and $n$ are clear. To state our main theorem, we define
\begin{align*}
    \mu_{2,\kappa}=\frac{1}{P_\kappa}\int_{[-\kappa,\kappa]} \frac{1}{\sqrt{2\pi}} e^{-\frac{x^2}{2}} x^2 dx \quad \text{and} \quad \beta=-\frac{\sqrt{\alpha}}{2}(1-\mu_{2,\kappa}).
\end{align*}
For notation purposes, we use $\xrightarrow{d}$ to denote converge in distribution and $\xrightarrow{p}$ to denote converge in probability.
\begin{thm}\label{t:lognormal}
Let $\kappa>0$ and $0<\alpha<\alpha_c(\kappa)$. Take $m=\lfloor\alpha n\rfloor$. Then,
\begin{align*}
    \frac{Z(G)}{\E(Z(G))} \xrightarrow{d} \mathrm{Lognormal}\left(\frac{1}{4}\log(1-4\beta^2)+\beta^2,-\frac{1}{2}\log(1-4\beta^2)-2\beta^2\right),
\end{align*}
as $n \rightarrow \infty$.
\end{thm}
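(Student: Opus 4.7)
My plan is to develop a dense analogue of the small subgraph conditioning method of Robinson--Wormald, in which the cycle counts of sparse random graph models are replaced by products of the Rademacher entries $G_{j,i}$ around bipartite cycles in $K_{m,n}$. The first step is a Walsh--Fourier expansion of each constraint indicator: using the sign-flip symmetry $G_{j,\cdot}\stackrel{d}{=}G_{j,\cdot}X$ for $X\in\{\pm 1\}^n$, the Walsh coefficient at $S\subseteq[n]$ depends only on $|S|$ through constants $h_\ell$ with $h_0=P_\kappa$. Substituting into $Z(G)=\sum_X\prod_j(\mathrm{ind}_j)$ and summing over $X$ annihilates every tuple $(S_1,\ldots,S_m)$ for which some column appears in an odd number of the $S_j$'s, yielding
\begin{equation*}
\frac{Z(G)}{\E Z(G)}\;=\;\sum_{(S_1,\ldots,S_m)\in\mathcal{E}}\;\prod_{j=1}^m\frac{h_{|S_j|}}{h_0}\prod_{j=1}^m\prod_{i\in S_j}G_{j,i},
\end{equation*}
where $\mathcal{E}$ is the set of ``column-even'' tuples and is in bijection with bipartite Eulerian subgraphs of $K_{m,n}$. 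A direct boundary/Taylor expansion using $\phi'(x)=-x\phi(x)$ gives $h_2/h_0\sim -(1-\mu_{2,\kappa})/n=2\beta/(\sqrt{\alpha}\,n)$.

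The key observables are the cycle sums $C_\ell:=\sum_{\text{simple }2\ell\text{-cycles in }K_{m,n}}\prod_{(j,i)\in\text{cycle}}G_{j,i}$ for $\ell\ge 2$. A standard closed-walks moment computation shows $\E C_\ell=0$, $\Var C_\ell\sim (mn)^\ell/(2\ell)$, and that for each fixed $L$ the rescaled vector $\bigl(\sqrt{2\ell/(mn)^\ell}\,C_\ell\bigr)_{\ell=2}^L$ converges jointly to independent standard Gaussians. Restricting the Eulerian sum above to simple cycles and substituting $h_2/h_0$ isolates the Gaussian piece
\begin{equation*}
\sum_{\ell\ge 2}\!\left(\frac{h_2}{h_0}\right)^{\!\ell}C_\ell\;\approx\;\sum_{\ell\ge 2}\frac{(2\beta)^\ell}{(mn)^{\ell/2}}\,C_\ell,
\end{equation*}
whose variance evaluates to $\sum_{\ell\ge 2}(2\beta)^{2\ell}/(2\ell)=-\tfrac12\log(1-4\beta^2)-2\beta^2=\sigma^2(\kappa)$, matching the target lognormal variance exactly. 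Since $\E(Z/\E Z)=1$ forces $\mu=-\sigma^2/2=\tfrac14\log(1-4\beta^2)+\beta^2$, the lognormal parameters are identified by this calculation alone.

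To promote the cycle-mode CLT to a distributional limit for $Z/\E Z$ itself, I would then carry out a joint moment method: compute $\E\bigl[(Z/\E Z)^k\prod_\ell C_\ell^{a_\ell}\bigr]$ via the $k$-replica representation $\E Z^k=\sum_{X^{(1)},\ldots,X^{(k)}}\prod_j\bP_j(\text{pairwise overlaps})$, apply a saddle-point / Laplace argument at the overlap $x=1/2$ (whose uniqueness as a stationary point of $F$ is precisely Hypothesis~\ref{h:1}), and match the resulting Gaussian fluctuations around the saddle with those captured by the $\{C_\ell\}$. This should give $\E[(Z/\E Z)^k]\to \exp\bigl(\binom{k}{2}\sigma^2\bigr)$ jointly with the prescribed Gaussian moments of the $C_\ell$, which are exactly the moments of $\mathrm{Lognormal}(\mu,\sigma^2)$.

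The main obstacle is establishing the dense analogue of the Robinson--Wormald variance condition, that is, showing every contribution outside the simple-cycle modes is asymptotically negligible. Concretely one must prove (i) uniform truncation of the $\ell$-sum, afforded by $|2\beta|<1$ for $\alpha<\alpha_c$; (ii) vanishing variance of Eulerian configurations in $\mathcal E$ other than disjoint unions of simple cycles, which requires combining sharp bipartite subgraph counts with decay estimates on the higher Walsh coefficients $h_{2k}/h_0$ for $k\ge 2$; and (iii) the coupling of $Z/\E Z$ with $(C_2,\ldots,C_L)$ at all joint moments, effectively a local CLT for $Z$ conditional on the cycle statistics. Hypothesis~\ref{h:1} is critical in (iii): it ensures the second moment integral localizes at $x=1/2$ with no exotic overlap contributions, so that the simple-cycle modes fully account for the excess variance of $Z/\E Z$.
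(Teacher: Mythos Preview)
Your Walsh--Fourier/Eulerian expansion is correct and is a genuinely different starting point from the paper's---the paper never expands $Z$ in the entries of $G$. Both approaches share the cycle observables $C_\ell$ and their joint Gaussian limit under $\bP$, and both invoke Hypothesis~\ref{h:1} to localize the pair-overlap sum at $1/2$. The gap is in your step (iii): you propose to conclude by matching all joint moments $\E\bigl[(Z/\E Z)^k\prod_\ell C_\ell^{a_\ell}\bigr]$, but the lognormal is the classical example of a distribution \emph{not} determined by its moments, so knowing $\E[(Z/\E Z)^k]\to \exp\bigl(\binom{k}{2}\sigma^2\bigr)$ for all $k$, even jointly with the moment-determined Gaussians $C_\ell$, does not identify the limit law of $Z/\E Z$. (As a secondary issue, the $k$-replica saddle you sketch for $k\ge 3$ is not covered by Hypothesis~\ref{h:1}, which only controls the two-replica functional $F$.) What is actually needed is an $L^2$ or in-probability statement of the form $Z/(\E Z\cdot e^{Y})\to 1$.

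The paper obtains exactly that, but by a mechanism that avoids the Eulerian combinatorics entirely: it tilts by the cycle functional and passes to planted measures via the size-bias identities $\E[Z\,e^{-Y_{M_1}}]=\E Z\cdot\E^*[e^{-Y_{M_1}}]$ and the two-replica analogue with $\bP^{*2}_t$. The whole computation then reduces to a local fact---under $\bP^*$ (resp.\ $\bP^{*2}_t$) each $C_k$ remains asymptotically Gaussian but with mean shifted by $(2\beta)^k$ (resp.\ $2(2\beta)^k$)---and these shifts, inserted into $Y_{M_1}$, make $\E^*[e^{-Y_{M_1}}]\to 1$ while $\E^{*2}_t[e^{-2Y_{M_1}}]$ exactly cancels the second-moment excess $\E Z^2/(\E Z)^2$. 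Chebyshev on the tilted ratio then gives $Z/(\E Z\,e^{Y_{M_1}})\to 1$ in probability, and the lognormal limit follows from the Gaussian limit of $Y$ under $\bP$. Your Eulerian route could in principle reach the same $L^2$ statement by expanding $\E[(Z/\E Z - e^{Y_L})^2]$ directly, but that forces you to uniformly control every non-simple-cycle Eulerian configuration together with all higher Walsh coefficients $h_{2k}/h_0$, which is substantially heavier than the planted mean-shift calculation the paper actually carries out.
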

This theorem describes the limiting distribution of $Z(G)/\E [Z]$. This enables us to study other properties of the model. Note that when $\alpha<\alpha_c(\kappa)$, we have $-1/2<\beta<0$. (This is a direct consequence of Lemma \ref{h:1} and will be proven in Section \ref{ss:hypo}). Therefore the parameters on the right hand side is well-defined.

\subsection{Consequences}
A few immediate consequences are presented below. We use $\bP$ to denote the law of $G$ when entries of $G$ take i.i.d. value in $\{\pm 1\}$ with probability $1/2$.
\begin{defn}[Planted Model]\label{d:planted}
We use $\bP^*$ to denote the law of $G$ condition on $X\in S(G)$, where $X$ is a solution uniformly chosen at random from $\{\pm 1\}^n$.
\end{defn}

\begin{thm}[Contiguity]\label{t:contiguity}
Let $\kappa>0$ and $0<\alpha<\alpha_c(\kappa)$. Take $m=\lfloor\alpha n\rfloor$. Then $\bP$ and $\bP^*$ are mutually contiguous, i.e. for any sequence of events $A_n$, $\bP(A_n)\rightarrow 0$ if and only if $ \bP^*(A_n)\rightarrow 0$.
\end{thm}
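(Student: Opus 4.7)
The plan is to derive Theorem \ref{t:contiguity} from Theorem \ref{t:lognormal} via the classical change-of-measure argument. First I would identify $d\bP^*/d\bP$ explicitly. By Definition \ref{d:planted}, for any event $A$,
\begin{align*}
\bP^*(A) = \frac{1}{2^n}\sum_{X\in\{\pm 1\}^n}\frac{\bP(G\in A,\, X\in S(G))}{\bP(X\in S(G))}.
\end{align*}
Since the law of $G$ is invariant under the sign flip $G_{j,i}\mapsto X_i G_{j,i}$, the probability $\bP(X\in S(G))$ does not depend on $X$ and equals $\E Z(G)/2^n$. Summing over $X$ and recognizing $\sum_X \mathbf{1}_{X\in S(G)}=Z(G)$ gives
\begin{align*}
\bP^*(A) = \E_{\bP}\!\left[\mathbf{1}_A \cdot \frac{Z(G)}{\E Z(G)}\right], \qquad \frac{d\bP^*}{d\bP} = \frac{Z(G)}{\E Z(G)}.
\end{align*}
Theorem \ref{t:lognormal} then says that $d\bP^*/d\bP$ converges in distribution to a random variable $W$ which is lognormal, strictly positive a.s., and satisfies $\E W = 1$ (direct check: $\mu(\kappa)+\sigma^2(\kappa)/2 = 0$ with the parameters in Theorem \ref{t:lognormal}).

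For the direction $\bP(A_n)\to 0 \Rightarrow \bP^*(A_n)\to 0$, I would establish uniform integrability of $Z/\E Z$. The second moment ratio $\E Z^2/(\E Z)^2$ is bounded under Hypothesis \ref{h:1} by the second moment computation of \cite{aubin2019storage}, which is in any case a prerequisite for Theorem \ref{t:lognormal}. This $L^2$-bound gives uniform integrability, so for any $\varepsilon>0$ one can find $M$ with $\E[(Z/\E Z)\mathbf{1}_{Z/\E Z>M}]<\varepsilon$ for all $n$, and then
\begin{align*}
\bP^*(A_n) \le M\,\bP(A_n) + \varepsilon,
\end{align*}
so $\bP(A_n)\to 0$ yields $\limsup_n \bP^*(A_n)\le \varepsilon$ for every $\varepsilon>0$.

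For the reverse direction $\bP^*(A_n)\to 0 \Rightarrow \bP(A_n)\to 0$, I would argue by contradiction. Suppose $\bP(A_n)\not\to 0$ and pass to a subsequence $n_k$ with $\bP(A_{n_k})\ge \delta>0$. Since $W>0$ almost surely, choose $c>0$ small enough that $\bP(W<c)\le \delta/4$. The lognormal has a continuous CDF, so by Portmanteau $\bP(Z/\E Z < c)\to \bP(W<c)$, and in particular $\bP(Z/\E Z<c)\le \delta/2$ for all large $k$. Then
\begin{align*}
\bP^*(A_{n_k}) \ge c\,\bP\bigl(A_{n_k}\cap\{Z/\E Z\ge c\}\bigr) \ge c(\delta-\delta/2) = c\delta/2,
\end{align*}
contradicting $\bP^*(A_n)\to 0$. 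This closes the mutual contiguity.

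The substantive obstacle is already absorbed by Theorem \ref{t:lognormal}; once the lognormal limit and the $L^2$-boundedness of $Z/\E Z$ are in hand, the contiguity argument is standard. The genuinely necessary input beyond weak convergence is the uniform integrability, which rests on the bounded second moment; without it, convergence in distribution to a positive limit would not by itself imply the first direction of contiguity. The second direction is somewhat softer and uses only that the lognormal limit assigns no mass to $0$.
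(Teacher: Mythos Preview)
Your proposal is correct and follows essentially the same approach as the paper's own proof. Both arguments use the identification $d\bP^*/d\bP = Z(G)/\E Z(G)$ (the paper uses it without explicitly deriving it as you do), then obtain the direction $\bP(A_n)\to 0 \Rightarrow \bP^*(A_n)\to 0$ from uniform integrability of $Z/\E Z$ and the reverse direction from the lognormal limit placing no mass at $0$; the paper phrases the latter as the direct bound $\bP(Z/\E Z \le e^{-M_2}) \le e^{-cM_2^2}$ while you run a subsequence/contradiction, but this is purely cosmetic.
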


\begin{thm}[Sharp Threshold]\label{t:threshold}
For any $\kappa>0$ and $\alpha>\alpha_c(\kappa)$,
\begin{align*}
    \lim_{n\rightarrow \infty}\bP(Z(G,\lfloor\alpha n\rfloor,n)\geq 1)=0.
\end{align*}
For any $0<\alpha<\alpha_c(\kappa)$,
\begin{align*}
    \lim_{n\rightarrow \infty}\bP(Z(G,\lfloor\alpha n\rfloor,n)\geq 1)=1.
\end{align*}
\end{thm}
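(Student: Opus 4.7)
The plan is to split along the two regimes: the supercritical case $\alpha>\alpha_c$ is handled by a direct first moment (Markov) argument, while the subcritical case $\alpha<\alpha_c$ follows essentially immediately from Theorem~\ref{t:lognormal} together with the fact that $\E Z(G)$ grows exponentially. The only real computation is the first moment; the lower bound is a soft consequence of the lognormal limit already established.

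For the upper bound, I would fix any $X\in\{\pm 1\}^n$ and note that by symmetry every $X$ satisfies a given constraint with the same probability
$p_n:=\bP\!\left(\bigl|\tfrac{1}{\sqrt n}\sum_{i=1}^n G_{j,i}\bigr|\le \kappa\right)$, and the $m$ constraints are mutually independent once $X$ is fixed. Hence $\E Z(G)=2^n p_n^m$. The CLT gives $p_n\to P_\kappa$, and a Berry--Esseen estimate (or local CLT) provides $p_n=P_\kappa\bigl(1+O(n^{-1/2})\bigr)$, so
$$\E Z(G)=\exp\!\bigl(n(\log 2+\alpha\log P_\kappa)+O(\sqrt n)\bigr).$$
For $\alpha>\alpha_c=-\log 2/\log P_\kappa$ the bracketed coefficient is strictly negative, so $\E Z(G)\to 0$ exponentially fast, and Markov's inequality yields $\bP(Z(G)\ge 1)\le \E Z(G)\to 0$.

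For the lower bound (the nontrivial half), I would apply Theorem~\ref{t:lognormal}. The same first moment computation shows that for $\alpha<\alpha_c$ we have $\E Z(G)=\exp(\Theta(n))\to\infty$. By Theorem~\ref{t:lognormal}, the ratio $Z(G)/\E Z(G)$ converges in distribution to a lognormal random variable $L$, which is almost surely strictly positive. Given any $\varepsilon>0$, pick $\delta>0$ with $\bP(L\le\delta)<\varepsilon$; since the lognormal distribution is continuous, convergence in distribution gives $\bP(Z(G)/\E Z(G)>\delta)\to \bP(L>\delta)>1-\varepsilon$. Because $\delta\,\E Z(G)\to\infty$, eventually $\delta\,\E Z(G)\ge 1$, so
$$\bP(Z(G)\ge 1)\ \ge\ \bP\!\bigl(Z(G)>\delta\,\E Z(G)\bigr)\ \longrightarrow\ \bP(L>\delta)\ >\ 1-\varepsilon.$$
Letting $\varepsilon\downarrow 0$ concludes the argument.

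There is no serious obstacle here beyond Theorem~\ref{t:lognormal}: all the difficulty of the sharp threshold has been packaged into the lognormal limit, and the only care needed is to make the first moment computation precise enough that the Berry--Esseen error term $\exp(O(\sqrt n))$ is negligible against the exponential $\exp\!\bigl(n(\log 2+\alpha\log P_\kappa)\bigr)$. This is routine since the error is of smaller exponential order, so one never competes with the main term on the exponential scale.
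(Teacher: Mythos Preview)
Your proposal is correct and follows essentially the same route as the paper: a first moment (Markov) bound for $\alpha>\alpha_c$ and the lognormal limit of Theorem~\ref{t:lognormal} combined with $\E Z(G)\to\infty$ for $\alpha<\alpha_c$. Your treatment is in fact slightly more careful than the paper's, which writes $\E Z(G)=2^n P_\kappa^m$ directly rather than tracking the Berry--Esseen correction $p_n=P_\kappa(1+O(n^{-1/2}))$ as you do.
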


\begin{thm}[Freezing of typical solutions]\label{t:freezing}
Let $\kappa>0$ and $0<\alpha<\alpha_c(\kappa)$. Take $m=\lfloor\alpha n\rfloor$. There exists a constant $\delta>0$ such that
\begin{align*}
    \lim_{n\rightarrow \infty}\bP(\{X_2\in S(G):d(X,X_2)\leq \delta n\}=\{X\})=1,
\end{align*}
where $X$ is chosen uniformly at random from $S(G)$. Here the distance function $d(X,X_2)=(n-\langle X,X_2\rangle)/2$ is the Hamming distance.
\end{thm}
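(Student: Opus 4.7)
The plan is to prove the statement in the planted model $\bP^*$ (Definition \ref{d:planted}) and transfer to $\bP$ via the contiguity result of Theorem \ref{t:contiguity} (which is where Hypothesis \ref{h:1} is used). The key observation is that under $\bP^*$ the planted solution $X$, conditional on $G$, is uniformly distributed on $S(G)$; so it suffices to show that under $\bP^*$, with high probability the planted $X$ has no other solution in $S(G)$ within Hamming distance $dn$, for some small $d>0$.

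The main work is a first-moment calculation under $\bP^*$. By symmetry in $X$ and independence of the rows of $G$,
\begin{align*}
    \E^*\bigl[\#\{X_2\in S(G)\setminus\{X\}: d(X,X_2)\le dn\}\bigr] = \sum_{k=1}^{\lfloor dn\rfloor}\binom{n}{k}\, p_n(k)^{m},
\end{align*}
where $p_n(k):=\bP\bigl(|G_j\cdot X_2|/\sn\le\kappa\mid |G_j\cdot X|/\sn\le \kappa\bigr)$ for any fixed pair $X,X_2$ at Hamming distance $k$. A bivariate local CLT applied to $(G_j\cdot X,\,G_j\cdot X_2)/\sn$ shows $p_n(k)\approx q_\kappa(1-k/n)/P_\kappa$; moreover $q_\kappa(1-y)/P_\kappa = 1 - \Theta(\sqrt{y})$ as $y\to 0^+$, because the bivariate Gaussian with correlation $\rho = 1-2y$ concentrates around the diagonal with a boundary layer of width $\sqrt{1-\rho^2}\sim 2\sqrt{y}$ at the slab edges $\pm\kappa$. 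Set $\mathcal{G}(y):=H(y)+\alpha\log(q_\kappa(1-y)/P_\kappa)$, with $H$ the binary entropy; then $\mathcal{G}(0)=0$, and the comparison $H(y)=y\log(1/y)+O(y)=o(\sqrt{y})$ versus $\log(q_\kappa(1-y)/P_\kappa)=-\Theta(\sqrt{y})$ yields $\mathcal{G}(y)<0$ on some interval $(0,d]$. For $k=\Theta(n)$, Stirling gives $\binom{n}{k}\,p_n(k)^m=\exp(n\mathcal{G}(k/n)+o(n))$, which is exponentially small. For $k=o(n)$, the crude bound $\binom{n}{k}\le n^k$ together with $p_n(k)^m\le\exp(-c\alpha\sqrt{kn})$ yields $\exp(k\log n-c\alpha\sqrt{kn})$, whose maximum over $k\ge 1$ is $\exp(-\Theta(n/\log n))$. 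Summing both regimes, the first moment is $o(1)$, and Markov gives $\bP^*\bigl(\exists X_2\in S(G)\setminus\{X\}:d(X,X_2)\le dn\bigr)\to 0$.

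To transfer, let $f(G):=\bP_{X\sim\mathrm{Unif}(S(G))}\bigl(\{X_2\in S(G):d(X,X_2)\le dn\}=\{X\}\bigr)$, extended by $f(G):=1$ when $S(G)=\emptyset$; this is a function of $G$ alone. Because under $\bP^*$ the planted $X$ is uniform on $S(G)$ given $G$, the previous paragraph gives $\E^*[1-f(G)]\to 0$, hence $f(G)\to 1$ in $\bP^*$-probability. Applying Theorem \ref{t:contiguity} to the events $\{f(G)\ge 1-\epsilon\}$, $f(G)\to 1$ in $\bP$-probability as well, and by boundedness $\E^{\bP}[1-f(G)\mid S(G)\ne\emptyset]\to 0$, which is the statement of the theorem. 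The main obstacle is establishing the uniform boundary-layer estimate $p_n(k)=1-\Theta(\sqrt{k/n})$ across all scales $1\le k\le dn$ (equivalently a sufficiently sharp local-CLT/Berry-Esseen bound for pairs of correlated $\pm 1$ linear combinations at the slab boundary); the $\sqrt{\cdot}$-singularity of $q_\kappa$ at $x=1$ is what drives the whole isolation phenomenon, and once it is controlled uniformly, the rest is a routine first-moment argument combined with the contiguity transfer.
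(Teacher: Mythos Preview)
Your overall strategy matches the paper's exactly: prove isolation of the planted solution by a first-moment bound under $\bP^*$, then transfer to $\bP$ by contiguity (Theorem~\ref{t:contiguity}). Your observation that under $\bP^*$ the planted $X$ is uniform on $S(G)$ given $G$ is correct, and your $f(G)$ device for the transfer is a clean way to say what the paper leaves implicit.

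Two points of comparison are worth making. First, the key estimate $p_n(k)\le 1-c\sqrt{k/n}$ that you flag as ``the main obstacle'' is obtained in the paper by a completely elementary argument that avoids any local CLT or boundary-layer analysis: writing $X_2$ as $X$ with $\ell$ coordinates flipped, one decomposes $G_j\cdot X = B_2+B_1$ and $G_j\cdot X_2 = B_2-B_1$ with $B_1,B_2$ independent Rademacher sums of lengths $\ell$ and $n-\ell$, and observes directly that
\[
\bP(E_1)-\bP(E_1\cap E_2)\ \ge\ \bP\bigl(0\le \kappa\sqrt{n}-B_2\le \sqrt{\ell},\ B_1\ge \sqrt{\ell}\bigr)\ \ge\ c\sqrt{\ell/n},
\]
uniformly in $1\le\ell\le\delta n$. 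This sidesteps entirely the uniform-in-$k$ bivariate Berry--Esseen issue you identify; the $\sqrt{\cdot}$ behavior falls out of the width of the window for $B_2$ rather than from the Gaussian boundary layer.

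Second, your handling of the sum has a slip: the function $k\mapsto k\log n - c\alpha\sqrt{kn}$ is convex in $\sqrt{k}$, so over $k\ge 1$ it has a \emph{minimum} at $k\asymp n/\log^2 n$ (with value $-\Theta(n/\log n)$), not a maximum; in particular the crude bound $\binom{n}{k}\le n^k$ is useless once $k$ exceeds $\Theta(n/\log^2 n)$. The fix is either to use the entropy bound $\binom{n}{k}\le e^{nH(k/n)}$ for all $k$ (as the paper does, noting that $H(y)-c\sqrt{y}<0$ on $(0,d]$ for $d$ small enough and then summing), or to place the crossover between your two regimes at $k\asymp n/\log^2 n$ rather than at ``$k=o(n)$''. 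Either way the sum is $o(1)$ and the argument goes through.
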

This means that for a typical solution $X$, there is no other solutions of distance less than or equal to $\delta n$ from it. Or in other words, for a typical solution $X$, if we flip a small constant proportion of its entries, the resulting vector is not a solution.

\subsection{Small graph conditioning for dense models}\label{ss:smallgraph}

The recurring challenge in applying the second moment method is when $\E[Z^2]/(\E[Z])^2\to C > 1$ which arises in a multitude of examples in the study of combinatorial properties of random graphs and random constraint satisfaction problems.  A key breakthrough was made in work of Robinson and Wormald who developed what is known as the small graph conditioning method to prove that random regular graphs have Hamilton cycles with high probability~\cite{robinson1994almost}.  Work of Janson~\cite{janson1995random} later formalized the method and explicitly showed that it establishes contiguity of the original and planted distributions.

The method works by ``explaining'' the additional variance through the effect that the small cycles in the graph have on the partition function.  In particular, typically each small cycle has a constant multiplicative effect on the expected  partition function since the edge completing the cycle is connecting two correlated variables and differs from an edge joining two distant variables.  The cumulative contribution can be expressed as
\[
\frac{Z}{\exp(Y)\E(Z)}  \xrightarrow{p} 1,
\]
as $n\to\infty$ where
\begin{equation}\label{eq:sparseSGC}
Y=C+ \sum_{k} \delta_k C_k,
\end{equation}
with $C_k$ the number of cycles of length $k$ and $\delta_k$ model dependent parameters.  While sometimes technically challenging to implement, the small graph method has played an important role in the analysis of many models~\cite{galanis2016inapproximability,coja2016chromatic,mossel2009hardness,mossel2015reconstruction,MR3783558}.

Unlike sparse random CSPs, the perceptron models are dense with direct interactions between each pair of variables in all of the constraints.  Nonetheless, we find the  analogue of equation~\eqref{eq:sparseSGC} defining
\begin{align*}
    Y:=\sum_{k=2}^{\lfloor \log n \rfloor} \frac{2(2\beta)^kC_{k}-(2\beta)^{2k}}{4k},
\end{align*}
where the cycle counts are now generalized as
\begin{equation}\label{eq:denseSGC}
    C_{k}=\left(\frac{1}{\sn}\right)^k\left(\frac{1}{\sm}\right)^k\sum_{\substack{i_1,i_2,\cdots, i_k\in [n] \text{ disjoint}\\j_1,j_2,\cdots, j_k\in [m] \text{ disjoint}}}\prod_{\ell=1}^k G_{j_\ell,i_\ell} G_{j_{\ell},i_{\ell+1}},
\end{equation}
where we identify $i_{k+1}$ with $i_1$. Previously, in the dense case, \cite{banerjee2018contiguity} has constructed a similar term that works for dense stochastic block models. In place of counting over cycles of the graph we instead sum over the products of matrix entries of $G$ along cycles in the entries.  We prove asymptotic joint normality for the $C_k$ by computing their joint moments and showing convergence to the moments of independent Gaussians.  With this definition we show the following theorem.

\begin{thm}\label{t:ConvInP}
Let $\kappa>0$ and $0<\alpha<\alpha_c(\kappa)$. Take $m=\lfloor\alpha n\rfloor$. Then,
\begin{align*}
    \frac{Z(G)}{\exp(Y)\E(Z(G))} \xrightarrow{p} 1,
\end{align*}
as $n \rightarrow \infty$. 
\end{thm}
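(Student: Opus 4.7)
The plan is to run a small subgraph conditioning argument in the Gaussian regime, using the planted/unplanted duality identity $\E[Z\cdot f(G)]=\E Z\cdot \E^*[f(G)]$ (which holds because $\bP^*$ is the size-biasing of $\bP$ by $Z$) as the bridge that converts weighted moments of $Z$ into moments under $\bP^*$. The goal is to identify $\exp(Y)$ as the likelihood ratio between the planted and unplanted limiting joint laws of the cycle counts $(C_k)_{k\ge 2}$, and then upgrade this identification to $L^2$ convergence of $Z/(\E Z\exp(Y))$ to $1$ via a matched second moment.

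The first technical step is the joint asymptotic normality of $(C_2,\ldots,C_K)$ under $\bP$ for any fixed $K$, proved by the method of moments. Expanding $\prod_k C_k^{n_k}$ into a sum over tuples of cycles, independence of the entries $G_{ji}$ forces every entry in the product to appear an even number of times; the dominant contribution then comes from matched configurations where distinct cycle factors pair up edge to edge with no further coincidences. The normalization $(\sqrt{nm})^{-k}$ cancels the cycle count $\sim n^km^k$ up to the cyclic and reflection symmetries of a cycle, yielding the Wick moments of independent $\mathcal N(0,2k)$ variables. Under $\bP^*$ the same expansion goes through, but with the planted $X$ fixed the rows $G_j$ remain independent while entries within a row are mildly correlated: writing $G_{ji}=X_iH_{ji}$ decouples the constraint $|\sum_iG_{ji}X_i/\sqrt n|\le\kappa$ from $X$, and a direct calculation gives $\E^*[G_{ji}G_{ji'}]=-X_iX_{i'}(1-\mu_{2,\kappa})/n+o(1/n)$ for $i\ne i'$. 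Each cycle of length $k$ contains exactly $k$ such pairs of edges sharing a common $j_\ell$; the $X$-factors cancel around the closed cycle, and summing $(-1)^k(1-\mu_{2,\kappa})^k/n^k$ over the $\sim n^km^k$ cycles and rescaling by $(nm)^{-k/2}$ produces $\E^*[C_k]\to(-\sqrt\alpha(1-\mu_{2,\kappa}))^k=(2\beta)^k$. Extending this diagrammatic analysis to joint moments shows that under $\bP^*$, $(C_k)_{k\ge 2}$ converges jointly to independent Gaussians $\mathcal N((2\beta)^k,2k)$.

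Combining the two moment computations via the duality identity gives, for every polynomial $g$ and fixed $K$,
\begin{align*}
\frac{\E\bigl[Z\cdot g(C_2,\ldots,C_K)\bigr]}{\E Z}=\E^*\bigl[g(C_2,\ldots,C_K)\bigr]\longrightarrow \E\bigl[g(\xi_2+(2\beta)^2,\ldots,\xi_K+(2\beta)^K)\bigr],
\end{align*}
with $\xi_k\sim\mathcal N(0,2k)$ independent. Specializing $g(c_2,\ldots,c_K)=\exp(\sum_kt_kc_k)$ and using the Gaussian moment generating function, the right-hand side equals $\E[\exp(Y_\infty^{(K)})\,g(\xi_2,\ldots,\xi_K)]$, where $Y_\infty^{(K)}=\sum_{k=2}^K\bigl((2\beta)^k\xi_k/(2k)-(2\beta)^{2k}/(4k)\bigr)$ is exactly the log-likelihood ratio of independent $\mathcal N((2\beta)^k,2k)$ relative to independent $\mathcal N(0,2k)$. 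This is the Gaussian analogue of the Robinson-Wormald-Janson identity: $\exp(Y)$ is the Radon-Nikodym derivative accounting for the cycle-mode contribution to the variance.

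Finally, to upgrade weak convergence to $L^2$ I would verify that total and cycle-mode variances match. A second-moment computation in the spirit of \cite{aubin2019storage} gives $\E Z^2/(\E Z)^2\to\exp\bigl(-\tfrac12\log(1-4\beta^2)-2\beta^2\bigr)$, and one checks directly that $\E\exp(2Y_\infty)=\prod_{k\ge 2}\exp((2\beta)^{2k}/(2k))$ equals the same quantity, so no variance of $Z/\E Z$ remains hidden outside the cycle modes. Expanding $\E[(Z/(\E Z\exp Y))-1]^2$ in terms of the joint $(Z,C_k)$ moments of the previous paragraph and letting $K\to\infty$ — the geometric decay of $(2\beta)^k$ for $|2\beta|<1$ makes the $\lfloor\log n\rfloor$ cutoff harmless — yields the desired $L^2$ convergence and hence convergence in probability. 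The main obstacle is the planted joint-moment computation itself: in the dense setting one must verify that crossing configurations, where cycles from different factors overlap in ways more intricate than a single edge-pair match, do not survive at the leading order $(nm)^{-\sum k_i/2}$. This combinatorial bookkeeping is the dense replacement for the configuration-model independence used in sparse small subgraph conditioning arguments.
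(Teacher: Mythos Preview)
Your outline captures the right architecture — planted/unplanted duality, joint normality of the cycle counts, and identification of $\exp(Y)$ as a likelihood ratio — and the moment computations under $\bP$ and $\bP^*$ are essentially what the paper does in Lemmas~\ref{l:normal} and~\ref{l:cycle}. The gap is in the final $L^2$ step.

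You propose to expand
\[
\E\Bigl[\bigl(\tfrac{Z}{\E Z\,\exp(Y)}-1\bigr)^2\Bigr]
=\frac{\E[Z^2\exp(-2Y)]}{(\E Z)^2}-\frac{2\E[Z\exp(-Y)]}{\E Z}+1
\]
``in terms of the joint $(Z,C_k)$ moments of the previous paragraph''. But the previous paragraph only gives you $\E[Z\,g(C)]=\E Z\cdot\E^*[g(C)]$, which is first order in $Z$. The quadratic term $\E[Z^2\exp(-2Y)]$ requires the law of $(C_k)$ under the \emph{two}-solution size-biased measure. The paper introduces exactly this object, $\bP^{*2}_t$ (the law of $G$ conditioned on two planted solutions at overlap $t\sqrt n$), proves that under $\bP^{*2}_t$ the cycle counts are asymptotically Gaussian with mean $2(2\beta)^k$ rather than $(2\beta)^k$, and then decomposes $\E[Z^2\exp(-2Y_{M_1}\mathbbm 1)]$ as a sum over $t$ of $\E^{*2}_t[\exp(-2Y_{M_1}\mathbbm 1)]$ weighted by pair-counting factors. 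The doubled mean shift is precisely what makes $\E^{*2}_t[\exp(-2Y_{M_1})]\approx\exp(-\tfrac12 L(M_1))$ cancel the second-moment constant. Your proposal never introduces $\bP^{*2}_t$, so you have no mechanism to evaluate the $Z^2$ term. The observation that $\E Z^2/(\E Z)^2$ and $\E\exp(2Y_\infty)$ agree is a consistency check that the cycle modes \emph{can} account for the variance; it is not by itself a proof that they do.

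Two further points. First, this $\bP^{*2}_t$ decomposition is also where Hypothesis~\ref{h:1} enters in earnest: it is needed to kill the contribution from pairs with macroscopic overlap $|t|\ge\delta\sqrt n$ in the \emph{weighted} second moment, not just in the bare one you cite from \cite{aubin2019storage}. Second, the paper truncates via $\mathbbm 1[|Y_{M_1}|\le M_2]$ so that only weak convergence of $Y_{M_1}$ (on bounded continuous test functions) is ever invoked; without truncation you would need uniform integrability of $\exp(\pm Y)$ under $\bP,\bP^*,\bP^{*2}_t$, which is not free since $Y$ is a polynomial of growing degree in the matrix entries.
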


We now discuss the motivation behind the definition of $Y$. We want to understand the effect on the distribution of a random solution $X \in S(G)$ from the constraint matrix $G$. In the symmetric perceptron model, the marginal distribution of each entry $X_i$ is always balanced by symmetry for any $G$. The next simplest interaction is to consider the effect of the constraints on the correlation between pairs of $X_i$. Each constraint has the effect of creating a small correlation which can be either positive or negative and is of order $1/n$.
More specifically, if $G_j$ denotes the $j$-th row of $G$ and $x_p,x_q\in \{\pm 1\}$, then
\begin{align*}
    \bP(X_p=x_p,X_q=x_q\mid X\in S_j(G),G_j )&\approx\frac{1}{4}\left(1-\frac{(1-\mu_{2,\kappa})x_p x_q G_{j,p} G_{j,q}}{n}\right)\\
    &\propto \exp\left(-\frac{(1-\mu_{2,\kappa})x_p x_q G_{j,p} G_{j,q}}{n}\right).
\end{align*}
If the effect of the rows were multiplicative then we might expect that
\begin{align*}
    \bP(X_p=x_p,X_q=x_q \mid X\in S(G),G)\propto \exp\left(- \frac{\sqrt{\alpha}(1-\mu_{2,\kappa}) J_{p,q}x_p x_q}{\sn}\right).
\end{align*}
where
\[
J_{p,q}= \frac{ \sum_{j=1}^m G_{j,p} G_{j,q}}{\sm}.
\]
Since the rows are independent, asymptotically $J_{p,q}\sim\mathcal{N}(0,1)$. This is strikingly similar to the interaction terms in the famous Sherrington-Kirkpatrick model. In the SK model, an expansion of the partition function can be expressed as a sum of products of weights around cycles as was shown in~\cite{aizenman1987some,banerjee2020fluctuation}. While the partition function for the symmetric perceptron does not have an analogous expansion, we can still write $Y$ as a similar sum over weighted cycles.

\section{Proof Outline}
A very important step in our proof is to establish Theorem \ref{t:ConvInP}, which states for any $\kappa>0$, $0<\alpha<\alpha_c(\kappa)$ and $m=\lfloor\alpha n\rfloor$, we have
\begin{align*}
    \frac{Z(G)}{\exp(Y)\E(Z(G))} \xrightarrow{p} 1,
\end{align*}
as $n \rightarrow \infty$. 
In other words, the number of solutions normalized by its expectation, $Z(G)/\E[Z]$, is close in probability to $\exp(Y)$. Theorem \ref{t:ConvInP} helps us to have a good description of $Z(G)$. In particular, by studying the distribution of $Y$, we can characterize the limiting distribution of the number of solutions (Theorem \ref{t:lognormal}). Indeed, we will show that $Y$ is asymptotically normal and, therefore, $\exp(Y)$ is asymptotically lognormal. 

Moreover, the asymptotic limit directly implies the existence of solutions below the threshold. Together with a first moment computation that gives us the non-existence of solutions above the threshold, we are able to establish the sharp threshold results (Theorem \ref{t:threshold}). 

Further, note that the planted and the original model are connected in the following way. The Radon–Nikodym derivative satisfies
\begin{align*}
    \frac{d\bP^*}{d\bP}=\frac{Z(G)}{\E(Z(G))}.
\end{align*}
Here recall that we use $\bP^*$ to denote the law of $G$ for the planted model and $\bP$ for the original model. As we have characterized the distribution of the right hand side, the contiguity property follows.

To establish freezing of typical solutions (Theorem \ref{t:freezing}), we apply our contiguity theorem. Contiguity is a strong property that connects the planted model to the original model, which allows us to say that any event that happens with high probability in one side, also happens with high probability on the other side. In particular, in our case, the freezing property can be easily shown in the planted model. With contiguity, we can translate the result to the original model and therefore prove the theorem.

In order to show Theorem \ref{t:ConvInP}, we consider a truncated $Y$. For a positive integer $M_1\geq 2$, define
\begin{align*}
    Y_{M_1}:=\sum_{k=2}^{M_1} \frac{2(2\beta)^kC_{k}-(2\beta)^{2k}}{4k}.
\end{align*}
\begin{lem}\label{l:secondmoment}
Let $\kappa>0$ and $0<\alpha<\alpha_c(\kappa)$. Take $m=\lfloor\alpha n\rfloor$. For any $\varepsilon>0$, there exist integers $M_1\geq 2$ and $M_2>0$ such that for $n$ large,
\begin{align}
    \E \left (\frac{Z(G)}{\exp(Y_{M_1}\mathbbm{1}[|Y_{M_1}|\leq M_2])} \right )\geq (1-\varepsilon) \E (Z(G)  ),\label{eq:firstm}
\end{align}
and
\begin{align}
    \E \left (\frac{Z(G)^2}{\exp(2Y_{M_1}\mathbbm{1}[|Y_{M_1}|\leq M_2])} \right )\leq (1+\varepsilon) \E (Z(G))^2. \label{eq:secondm}
\end{align}
\end{lem}
This lemma enables us to say $Z(G)/\E [Z]$ is close to $\exp(Y_{M_1}\mathbbm{1}[|Y_{M_1}|\leq M_2])$, which is also close to $\exp(Y)$. This will imply Theorem \ref{t:ConvInP}. 
 
In order to prove Lemma \ref{l:secondmoment} and to study the distribution of $Y$, we need to learn the joint distribution of the cycle counts. Recall the definition of the planted model in Definition \ref{d:planted}. We consider another definition when two solutions are planted.
\begin{defn}[Two solutions planted]
We sample two solutions $X_1$ and $X_2$ with replacement uniformly at random from $\{\pm 1\}^n$ condition on $\langle X_1,X_2 \rangle=t\sn$. We use $\bP^{*2}_t$ to denote the distribution of $G$ condition on $X_1,X_2\in S(G)$. 
\end{defn}
With this definition, we have the following characterization of $C_k$. 
\begin{lem}\label{l:normal}
Let $\kappa>0$ and $0<\alpha<\alpha_c(\kappa)$. Take $m= \lfloor\alpha n\rfloor$. Under $\bP$, for any integer $k\geq 2$,
\begin{align*}
    \left(\frac{C_{2}}{2},\cdots, \frac{C_{k}}{\sqrt{2k}}\right) \xrightarrow{d} \mathcal{N}(\mathbf{0},I_{k-1}),
\end{align*}
as $n$ goes to infinity. Under $\bP^*$, for any integer $k\geq 2$,
\begin{align*}
    \left(\frac{C_{2}-(2\beta)^2}{2},\cdots, \frac{C_{k}-(2\beta)^k}{\sqrt{2k}}\right) \xrightarrow{d} \mathcal{N}(\mathbf{0},I_{k-1}),
\end{align*}
as $n$ goes to infinity. For any $|t|\leq \log n$ and any integer $k\geq 2$, under $\bP^{*2}_t$, 
\begin{align*}
    \left(\frac{C_{2}-2(2\beta)^2}{2},\cdots, \frac{C_{k}-2(2\beta)^k}{\sqrt{2k}}\right) \xrightarrow{d} \mathcal{N}(\mathbf{0},I_{k-1}),
\end{align*}
as $n$ goes to infinity.
\end{lem}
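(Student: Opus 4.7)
The plan is to prove Lemma \ref{l:normal} by the method of moments: show that the joint moments of $(C_k-\E[C_k])/\sqrt{2k}$ under the relevant measure converge to those of independent standard Gaussians. Since the Gaussian law is determined by its moments, this will imply the stated convergence in distribution under each of $\bP$, $\bP^*$, and $\bP^{*2}_t$.

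Under $\bP$ the entries $G_{j,i}$ are i.i.d.\ uniform $\pm 1$. Expanding a joint moment $\E[\prod_\ell C_{k_\ell}^{a_\ell}]$ gives a sum over ordered tuples of $r=\sum_\ell a_\ell$ bipartite cycles in $[n]\times[m]$, and by independence each term has zero expectation unless every edge is traversed an even number of times. A power-counting argument then shows the dominant contribution comes from \emph{perfect cycle pairings}: $r$ must be even, the cycles split into $r/2$ pairs in which the two cycles have equal length $k_\ell$ and identical edge sets, and distinct pairs use disjoint edges. Each pair yields a combinatorial factor $2k_\ell\cdot n^{k_\ell}m^{k_\ell}$ (the choice of a labeled first cycle times the $2k_\ell$ dihedral re-labelings producing the same edge set in the partner), which after normalization by $(nm)^{k_\ell}$ gives $2k_\ell$ per matched pair and reproduces the Wick formula for independent $\mathcal N(0,2k_\ell)$ variables. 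All other overlap patterns (cycles sharing strict subsets of edges, triple coincidences at an edge, self-intersections in the walk) force more index identifications than they save in edges, so they contribute $o(1)$; cycles of different lengths cannot be matched, which gives the asymptotic independence.

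For $\bP^*$ I would invoke the symmetry $X\mapsto -X$ to reduce to the case that the planted solution is $\mathbf 1$, so the rows of $G$ remain i.i.d., each uniform on $\{g\in\{\pm 1\}^n : |\sum_i g_i|\leq\kappa\sn\}$. Since $\E[(G_j\cdot\mathbf 1)^2]/n\to\mu_{2,\kappa}$ and coordinates within a row are exchangeable, $\E[G_{j,i_1}G_{j,i_2}]\to -(1-\mu_{2,\kappa})/n$ for distinct $i_1,i_2$. Substituting into the first-moment formula and using $m/n\to\alpha$ yields $\E[C_k]\to(-\sqrt\alpha(1-\mu_{2,\kappa}))^k=(2\beta)^k$. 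For the centered moments I decompose each edge pair as $G_{j,i}G_{j,i'}=c+W_{j,i,i'}$ with $c=\E[G_{j,i}G_{j,i'}]$ and $W$ centered, and run the pairing analysis on the $W$-part: higher joint moments of the $W$'s within a row are also $O(1/n)$, and bookkeeping shows these corrections absorb into the mean but leave the covariance structure of the leading pairings unchanged, so that $(C_k-(2\beta)^k)/\sqrt{2k}$ has the same joint Gaussian limit as under $\bP$. For $\bP^{*2}_t$ I would take $X_1=\mathbf 1$ and $X_2$ of overlap $t\sn$, and exploit the identity $\E[G_{j,i_1}G_{j,i_2}]=X_{2,i_1}X_{2,i_2}\,\E[G_{j,i_1}G_{j,i_2}]$ (obtained by the change of variables $H_{j,i}=G_{j,i}X_{2,i}$, which preserves the joint constraint $(G\cdot X_1,G\cdot X_2)\leftrightarrow(G\cdot X_2,G\cdot X_1)$): the correlation vanishes when $X_{2,i_1}X_{2,i_2}=-1$ and doubles to $\approx -2(1-\mu_{2,\kappa})/n$ when $X_{2,i_1}X_{2,i_2}=+1$. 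Summing only over cycles whose $[n]$-indices all lie in a single class of $X_2$ (two classes each of size $\approx n/2$) produces the mean $2(2\beta)^k$, while the centered moments again reduce to the $\bP$ calculation.

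The main obstacle is the power-counting that shows non-pairing overlap patterns vanish in the limit, uniformly across the three measures. Under $\bP$ independence forces an exact parity constraint that makes the bookkeeping cleanest; under the planted measures one must combine the combinatorial argument with the $O(1/n)$ within-row correlations and verify they feed into the mean but not the leading covariance. Additional care is needed for $\bP^{*2}_t$ to make the estimates uniform over $|t|\leq\log n$; since the row-level constraint distribution is smooth in $t$ and well-approximated by a product of truncated Gaussians at this scale, the pairing count and the correlation estimate $-2(1-\mu_{2,\kappa})/n+o(1/n)$ hold uniformly in $t$ in this range, yielding the stated limit.
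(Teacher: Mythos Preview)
Your proposal is correct and follows essentially the same route as the paper: method of moments, centering each edge factor $G_{j,i}G_{j,i'}$ by its row-wise planted mean (your $W$ is the paper's $\bar G_e$, your ``$W$-cycle'' is the paper's $\bar C_k$, and the cross terms you say ``absorb into the mean'' are handled separately as Lemma~\ref{l:cycleRterm}), followed by a power count showing that perfect cycle pairings dominate. One point to sharpen: your blanket claim that ``higher joint moments of the $W$'s within a row are $O(1/n)$'' is not literally true (a product of $W$'s whose induced multigraph is even has expectation $O(1)$), and the paper replaces it by the parity-graded trichotomy $O(1)/O(1/n)/O(1/n^2)$ of Lemmas~\ref{l:rows}--\ref{l:rows2}, combined with a vertex-degree inequality $\deg_{H_0}(v)\ge R_1(v)+2$, to make the power count go through under $\bP^*$ and $\bP^{*2}_t$.
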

Note that the purpose of studying the joint distribution of cycles under $\bP$, $\bP^*$ and $\bP^{*2}_t$ are different. The joint distribution of cycles under $\bP$ will allow us to characterize the distribution of $Y$, since $Y$ is a weighted sum of the cycles; whereas distributions under $\bP^*$ and $\bP^{*2}_t$ will be useful in computing a weighted first and second moment of $Z(G)$, see the left hand side of $\eqref{eq:firstm}$ and~$\eqref{eq:secondm}$ respectively. With this lemma, we have the following results on the distribution of $Y$. Define a function
\begin{align*}
    L(M_1):=\sum_{k=2}^{M_1} \frac{(2\beta)^{2k}}{k}.
\end{align*}

\begin{lem}\label{l:YMconverge}
Let $\kappa>0$ and $0<\alpha<\alpha_c(\kappa)$. Take $m= \lfloor\alpha n\rfloor$. Under $\bP$, for any integer $M_1\geq 2$,
\begin{align*}
    Y_{M_1}\xrightarrow{d} \mathcal{N}\left(-\frac{1}{4}L(M_1),\frac{1}{2}L(M_1)\right),
\end{align*}
as $n$ goes to infinity. Under $\bP^*$, for any integer $M_1\geq 2$,
\begin{align*}
    Y_{M_1}\xrightarrow{d} \mathcal{N}\left(\frac{1}{4}L(M_1),\frac{1}{2}L(M_1)\right),
\end{align*}
as $n$ goes to infinity. For any $|t|\leq \log n$ and any integer $M_1\geq 2$, under $\bP^{*2}_t$, 
\begin{align*}
     Y_{M_1}\xrightarrow{d} \mathcal{N}\left(\frac{3}{4}L(M_1),\frac{1}{2}L(M_1)\right),
\end{align*}
as $n$ goes to infinity.
\end{lem}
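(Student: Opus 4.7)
The plan is to deduce Lemma \ref{l:YMconverge} directly from the joint asymptotic normality of the cycle counts supplied by Lemma \ref{l:normal}. The key observation is that $Y_{M_1}$ is an affine function of $(C_2,\ldots,C_{M_1})$:
\begin{align*}
Y_{M_1} \;=\; \sum_{k=2}^{M_1} \frac{(2\beta)^k}{2k}\, C_k \;-\; \sum_{k=2}^{M_1}\frac{(2\beta)^{2k}}{4k},
\end{align*}
where the coefficients $(2\beta)^k/(2k)$ and the shift $-L(M_1)/4$ are deterministic and independent of $n$. Since $M_1$ is a fixed constant, the continuous mapping theorem applied to this linear functional converts the joint asymptotic normality of $(C_2/2,\ldots,C_{M_1}/\sqrt{2M_1})$ into asymptotic normality of $Y_{M_1}$ with the appropriate mean and variance.

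Under $\bP$, Lemma \ref{l:normal} gives that $(C_2,\ldots,C_{M_1})$ converges jointly in distribution to independent Gaussians with means $0$ and variances $(2k)_{k=2}^{M_1}$. Hence
\begin{align*}
\E[Y_{M_1}] \;\to\; -\sum_{k=2}^{M_1}\frac{(2\beta)^{2k}}{4k} \;=\; -\tfrac{1}{4}L(M_1), \qquad \Var(Y_{M_1}) \;\to\; \sum_{k=2}^{M_1}\frac{(2\beta)^{2k}}{4k^2}\cdot 2k \;=\; \tfrac{1}{2}L(M_1),
\end{align*}
matching the first claim. The computations under $\bP^*$ and $\bP^{*2}_t$ are analogous; only the limiting mean of each $C_k$ changes from $0$ to $(2\beta)^k$ and $2(2\beta)^k$ respectively. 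Substituting these means into the linear combination shifts the mean of $Y_{M_1}$ to
\begin{align*}
\sum_{k=2}^{M_1}\frac{(2\beta)^{2k}}{2k} - \tfrac{1}{4}L(M_1) = \tfrac{1}{4}L(M_1) \quad \text{and} \quad \sum_{k=2}^{M_1}\frac{(2\beta)^{2k}}{k} - \tfrac{1}{4}L(M_1) = \tfrac{3}{4}L(M_1),
\end{align*}
while the variance is unchanged since Lemma \ref{l:normal} specifies the same limiting covariance structure in all three cases.

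There is essentially no substantial obstacle beyond clean bookkeeping of the three centering shifts. Because $M_1$ does not depend on $n$, no tightness, truncation, or uniform moment bounds are required at this stage: we are simply taking a fixed continuous linear function of a convergent random vector. The only small point to verify is that the joint convergence in Lemma \ref{l:normal} is genuinely multivariate (so that linear combinations of the $C_k$ converge to the corresponding linear combinations of the limiting Gaussians), which follows by inverting the fixed affine rescaling $C_k \mapsto (C_k - c_k)/\sqrt{2k}$. A Slutsky argument then absorbs the deterministic shift $-L(M_1)/4$, and $Y_{M_1}$ inherits the Gaussian limit with the claimed parameters in each of the three regimes.
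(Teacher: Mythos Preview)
Your proposal is correct and matches the paper's approach: the paper does not write out a separate proof of Lemma \ref{l:YMconverge} but treats it as an immediate consequence of Lemma \ref{l:normal}, and your argument is exactly the natural way to fill in that step. The affine decomposition of $Y_{M_1}$ in terms of the $C_k$, the continuous mapping/Slutsky argument, and the mean and variance computations under each of $\bP$, $\bP^*$, $\bP^{*2}_t$ are all correct.
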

We note that $L(M_1)=\sum_{k=2}^{M_1} \frac{(2\beta)^{2k}}{k}$ is the Taylor series expansion for $-\log(1-4\beta^2)-4\beta^2$. Therefore, we have $L(M_1)\to -\log(1-4\beta^2)-4\beta^2$ as $M_1 \to \infty$. This explains the parameters for the lognormal distribution in Theorem \ref{t:lognormal}.

We prove Lemma \ref{l:normal} by computing joint moments of $C_k$ and showing convergence to the moments of independent Gaussians. To this end, we expand the sums and reorganize them according to underlying hypergraphs that they represent. With some combinatorial computations, we are able to show that the leading order terms converge to the corresponding moments of independent Gaussians. 

The second moment computation in the proof of Lemma \ref{l:secondmoment} requires a property of a special function. This property appears as a hypothesis in \cite{aubin2019storage,perkins2021frozen}. It is an essential condition to establish that most pairs of solutions are of distance around $n/2$ from each other. More specifically, for $0\leq x\leq 1$, let $N_1$ and $N_2$ be two standard normal random variables with correlation $2x-1$. Define
\begin{align*}
    q_\kappa(x):=\bP(|N_1|\leq \kappa, |N_2|\leq \kappa) \quad \text{and} \quad F_{\kappa,\alpha}(x):=\alpha \log q_\kappa(x)-x \log x-(1-x)\log(1-x).
\end{align*}
\begin{lem}\label{h:1}
For any $\kappa>0$ and $0<\alpha<\alpha_c(\kappa)$, $F_{\kappa,\alpha}(x)<F_{\kappa,\alpha}(\frac{1}{2})$, for any $x\in [0,1]$ except at $1/2$.
\end{lem}
The proof is given in Section \ref{ss:hypo}.

\section{Proofs of Main Results}
We start by defining a few discrete analogues of the definitions in Section \ref{ss:maintheorem}. Define \begin{align*}
    P_{\kappa,n}=\bP(|X|\leq \kappa \sn),
\end{align*}
where $X$ is the sum of $n$ independent Rademacher random variables. And define
\begin{align*}
    \mu_{2,\kappa,n}=\frac{1}{2^n n P_{\kappa,n}} \sum_{t=0}^n  \binom{n}{t} (2t-n)^2\mathbbm{1}(|2t-n|\leq \kappa \sn) \quad \text{and} \quad \beta_{n}=-\frac{\sqrt{m}}{2\sqrt{n}}(1-\mu_{2,\kappa,n}).
\end{align*}
Note that when $m=\lfloor \alpha n \rfloor$, $P_{\kappa,n}$, $\mu_{2,\kappa,n}$ and $\beta_{n}$ converge to $P_{\kappa}$, $\mu_{2,\kappa}$ and $\beta$ as $n \to \infty$.

\subsection{Proof of Lemma \ref{l:normal}}\label{ss:normal}
To prove the asymptotic normality of $C_{k}$, we define a shifted cycle count. Without loss of generality, when we plant one solution, we assume that the all one vector is chosen. When we plant two solutions, we assume that one of them is the all one vector and the other has $1$'s in the first $n/2+t\sqrt{n}/2$ entries and $-1$'s in the rest. When we are working with $\bP^*$, define
\begin{align*}
    \bar C_k(G)=\left(\frac{1}{\sn}\right)^k\left(\frac{1}{\sm}\right)^k\sum_{\substack{i_1,i_2,\cdots, i_k\in [n] \text{ disjoint}\\j_1,j_2,\cdots, j_k\in [m] \text{ disjoint}}}\prod_{\ell=1}^k \left(G_{j_\ell,i_\ell} G_{j_\ell,i_{\ell+1}}-\frac{2\beta_n}{\sqrt{mn}}\right),
\end{align*}
Similarly, for $\bP^{*2}_t$, we define
\begin{align*}
    \bar C_k(G)=\left(\frac{1}{\sn}\right)^k\left(\frac{1}{\sm}\right)^k\sum_{\substack{i_1,i_2,\cdots, i_k\in [n] \text{ disjoint}\\j_1,j_2,\cdots, j_k\in [m] \text{ disjoint}}}\prod_{\ell=1}^k \left(G_{j_\ell,i_\ell} G_{j_\ell,i_{\ell+1}}-\frac{4\beta_n\mathbbm{1}(i_\ell,i_{\ell+1}\in Q)}{\sqrt{mn}}\right),
\end{align*}
where $i_\ell,i_{\ell+1}\in Q$ whenever $i_\ell,i_{\ell+1}\in [n/2+t\sqrt{n}/2]$ or $i_\ell,i_{\ell+1}\in [n]\backslash[n/2+t\sqrt{n}/2]$. Here we use $[k]$ to denote the set $\{1,2,\cdots,k\}$.

\begin{lem}\label{l:cycle}
Let $\kappa>0$ and $0<\alpha<\alpha_c(\kappa)$. Take $m= \lfloor\alpha n\rfloor$. Under $\bP^*$ or $\bP^{*2}_t$ where $|t|\leq \log n$, for any integer $k\geq 2$,
\begin{align*}
    \left(\frac{\bar C_2}{2},\cdots, \frac{\bar C_k}{\sqrt{2k}}\right) \xrightarrow{d} \mathcal{N}(\mathbf{0},I_{k-1}),
\end{align*}
as $n$ goes to infinity. Under $\bP$, for any integer $k\geq 2$,
\begin{align*}
    \left(\frac{ C_2}{2},\cdots, \frac{ C_k}{\sqrt{2k}}\right) \xrightarrow{d} \mathcal{N}(\mathbf{0},I_{k-1}),
\end{align*}
as $n$ goes to infinity. 
\end{lem}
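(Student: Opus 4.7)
The plan is to prove asymptotic joint normality by the method of moments: compute $\E[\prod_k (\bar C_k/\sqrt{2k})^{p_k}]$ (or the analogous quantity with $C_k$ under $\bP$) and show it converges to the corresponding joint moment of independent standard Gaussians. Concretely, one must verify that odd moments vanish, even moments factor into Wick pairings giving $(p_k-1)!!$ per coordinate, and cycle counts of different lengths decouple asymptotically. This is the dense analogue of the moment analysis used by Janson in the classical sparse small-graph conditioning method.

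For the unplanted case ($\bP$), the entries $G_{i,j}$ are i.i.d.\ Rademacher. Expanding $\prod_k C_k^{p_k}$ produces a multi-sum indexed by tuples of cycle configurations, and the expectation of a given term equals $1$ or $0$ depending on whether every $G_{i,j}$ appears an even number of times. The combinatorial claim is that the dominant contribution comes from configurations in which the cycles are grouped into pairs, each pair consisting of two cycles traversing the same unoriented alternating cycle in the bipartite graph $[n]\times[m]$. A single unoriented $k$-cycle admits $2k$ oriented labelings (cyclic shifts times reversals), so each such cycle contributes $(2k)^2$ to $\E[C_k^2]$. Combined with $\approx n^k m^k/(2k)$ unoriented cycles and the normalization $(nm)^{-k}$, this yields $\E[C_k^2]\to 2k$; the same bookkeeping for higher and joint moments reproduces the Gaussian moment formula, and also accounts for the factor $\sqrt{2k}$ in the rescaling.

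For the planted models, the key structural observation is that under both $\bP^*$ and $\bP^{*2}_t$ the $m$ rows of $G$ remain independent (the planted constraints act row by row). Within a row the entries are no longer independent, but the shift $2\beta_n/\sqrt{mn}$ (respectively the two-valued shift involving $\mathbbm{1}(i_\ell,i_{\ell+1}\in Q)$) is precisely the first-order correction that makes each edge factor $Y_\ell := G_{i_\ell,j_\ell}G_{i_{\ell+1},j_\ell}-\text{shift}$ centered up to $O(1/n^2)$. Because the $j_\ell$'s within a cycle are distinct, the $Y_\ell$'s live in independent rows, so each cycle moment factorizes over $\ell$ and the combinatorial enumeration reduces to the Rademacher case. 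For a matched pair of edges in the same row and with the same $i$-pair one has $\E[Y_{\ell_1}Y_{\ell_2}] = 1 - (\text{shift})^2 = 1+O(1/n^2)$, so the variance $\E[\bar C_k^2]\to 2k$ and the higher Gaussian moments match the unplanted case exactly.

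The main technical obstacle is controlling configurations in which the cycles overlap in patterns other than the clean pairwise double-cover. One must enumerate the possible overlap patterns of $i$- and $j$-vertices across the cycles, compare the number of free vertices in each pattern against the normalization $(nm)^{-\sum_k k p_k/2}$, and show every non-dominant pattern contributes $o(1)$. In the planted cases, one must additionally verify that the many $O(1/n^2)$ residuals arising from the imperfect centering, together with the cross-correlations within a row induced by the planted constraint, never accumulate to a non-negligible contribution; this is where the precise choice of the shift in the definition of $\bar C_k$ is essential and where the condition $|t|\leq \log n$ on the two-solution overlap enters to keep the tilting controlled.
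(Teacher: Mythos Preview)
Your plan is correct and matches the paper's approach: method of moments, row-wise independence under the planted measures, identification of the dominant Wick contribution as paired double cycles, and a vertex-counting bound on non-dominant overlap patterns. The only refinement is that the paper makes your ``technical obstacle'' precise via a trichotomy (Lemmas~\ref{l:rows} and~\ref{l:rows2}): for the multigraph $H_j$ of edges a given row contributes, one has $\E^*[\bar G_{H_j}]=O(1)$, $O(1/n)$, or $O(1/n^2)$ according as $H_j$ is even, has exactly two odd-degree vertices (and $\ge 2$ edges), or otherwise, and it is this graded bound---not merely the $O(1/n^2)$ centering of a single edge---that feeds the inequality $|V(H_0)|+R_1-R_3\le \mathcal L(\alpha)$ killing every non-dominant type.
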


Moreover, we have that
\begin{lem}\label{l:cycleRterm}
Let $\kappa>0$ and $0<\alpha<\alpha_c(\kappa)$. Take $m= \lfloor\alpha n\rfloor$. Under $\bP^*$, for any integer $k\geq 2$,
\begin{align*}
    C_k - \bar  C_k- (2\beta_n)^k \xrightarrow{p} 0,
\end{align*}
as $n$ goes to infinity. For any $|t|\leq \log n$ and any integer $k\geq 2$, under $\bP^{*2}_t$, 
\begin{align*}
    C_k - \bar  C_k- 2(2\beta_n)^k \xrightarrow{p} 0,
\end{align*}
as $n$ goes to infinity.
\end{lem}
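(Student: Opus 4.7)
The plan is to directly compute $\E[C_k - \bar{C}_k]$ and $\Var(C_k - \bar{C}_k)$ by exploiting row-independence of $G$ under $\bP^*$ (resp.\ $\bP^{*2}_t$). With $a_\ell := G_{i_\ell,j_\ell} G_{i_{\ell+1},j_\ell}$ and $c := 2\beta_n/\sqrt{mn}$, the decomposition
\[
C_k - \bar{C}_k = \frac{1}{(\sqrt{nm})^k}\sum_{i,j}\left[\prod_{\ell=1}^k a_\ell - \prod_{\ell=1}^k (a_\ell - c)\right]
\]
is the starting point for the $\bP^*$ case.

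For the expectation under $\bP^*$, within any row $j$ the entries are exchangeable with mean $0$ and pairwise covariance $\E^*[G_{j,i} G_{j,i'}] = (\E^*[S_j^2] - n)/(n(n-1)) = -(1-\mu_{2,\kappa,n})/(n-1) =: c^\star$, which matches $c$ up to relative error $O(1/n)$. Since rows are independent and $j_1,\dots,j_k$ are distinct in the sum, row-factorization yields $\E^*[\prod_\ell a_\ell] = (c^\star)^k$ and $\E^*[\prod_\ell (a_\ell - c)] = (c^\star - c)^k$. Because $c^\star - c = O(1/n^2)$ whereas $(c^\star)^k = O(n^{-k})$, the latter is negligible, and
\[
\E^*[C_k - \bar{C}_k] = \frac{(n)_k (m)_k}{(\sqrt{nm})^k}\bigl[(c^\star)^k - (c^\star - c)^k\bigr] = (2\beta_n)^k + o(1).
\]

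To show $\Var^*(C_k - \bar{C}_k) \to 0$, expand the polynomial identity $\prod_\ell a_\ell - \prod_\ell (a_\ell - c) = -\sum_{S \subsetneq [k]}(-c)^{k-|S|}\prod_{\ell \in S}a_\ell$ and bound each summand $T_S := -(-c)^{k-|S|}(\sqrt{nm})^{-k}\sum_{i,j}\prod_{\ell \in S} a_\ell$. The prefactor $|c|^{k-|S|} = O(n^{-(k-|S|)})$ is the small parameter. A second-moment calculation, decomposing the double sum over two index copies by the coincidence pattern among the row indices appearing in $S$ (the same combinatorics underlying Lemma~\ref{l:cycle}), gives $\Var^*(T_S) = O(1/n)$ for every $S \subsetneq [k]$. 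Summing the finitely many terms yields $\Var^*(C_k - \bar{C}_k) \to 0$, and Chebyshev finishes the $\bP^*$ case.

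For $\bP^{*2}_t$, the index-dependent centering $c_\ell := 4\beta_n \mathbbm{1}(i_\ell, i_{\ell+1} \in Q)/\sqrt{mn}$ matches the true covariance $\E^{*2}_t[G_{j,i_\ell}G_{j,i_{\ell+1}}]$ up to $O(1/n^2)$: it is approximately $-2(1-\mu_{2,\kappa,n})/n$ when $i_\ell,i_{\ell+1}$ lie in the same block, and only $O((\log n)/n^{3/2})$ across blocks (using $|t|\leq \log n$). Running the analogous expansion, $\E^{*2}_t[\prod_\ell a_\ell]$ concentrates on index configurations where $i_1,\dots,i_k$ all lie in a single block; the two block choices produce the extra factor of $2$, yielding $\E^{*2}_t[C_k - \bar{C}_k] \to 2(2\beta_n)^k$, and the variance bound transfers verbatim. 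The main technical obstacle is the variance estimate $\Var(T_S) = O(1/n)$ for intermediate $|S|$: the second-moment sum must be classified by the matching pattern of row indices between the two index copies, and one has to verify that only the fully disjoint pattern contributes at leading order while all other coincidence patterns are of lower order. The argument parallels the moment computation of Lemma~\ref{l:cycle}, with the advantage that the prefactor $c^{k-|S|}$ forces each term to vanish rather than merely converge to a constant; the additional block-membership bookkeeping in the $\bP^{*2}_t$ variant adds no essentially new difficulty.
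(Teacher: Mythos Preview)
Your approach is correct and close in spirit to the paper's, but organized differently in one key respect. You expand
\[
\prod_\ell a_\ell - \prod_\ell(a_\ell - c) = -\sum_{S\subsetneq[k]}(-c)^{k-|S|}\prod_{\ell\in S}a_\ell
\]
in the \emph{raw} products $\prod_{\ell\in S}a_\ell$, then compute the expectation and variance separately. The paper instead expands in the \emph{centered} factors,
\[
\prod_\ell a_\ell - \prod_\ell(a_\ell - c) = \sum_{A\subsetneq[k]}\Bigl(\prod_{\ell\in A}(a_\ell - c)\Bigr)c^{k-|A|},
\]
isolates the $A=\emptyset$ term as the constant $(2\beta_n)^k + O(1/n)$, and then bounds the full second moment $\E^*[(C_k-\bar C_k-(2\beta_n)^k)^2]$ in one stroke. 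The advantage of the paper's choice is that the centered products $\prod_{\ell\in A}(a_\ell-c)$ are exactly the objects $\bar G_{H_j}$ controlled by Lemma~\ref{l:rows}, so the multigraph combinatorics of Lemma~\ref{l:cycle} applies verbatim.

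Your expectation step is clean and correct (row-independence plus the exchangeable covariance $c^\star$ does the job). The variance step is where you are a bit too quick: you invoke ``the same combinatorics underlying Lemma~\ref{l:cycle}'' to get $\Var^*(T_S)=O(1/n)$, but that lemma and Lemma~\ref{l:rows} are tailored to the centered $\bar G_e$'s, not the raw $a_\ell$'s. With the uncentered expansion you would instead feed in the raw moment bounds of equation~\eqref{eq:Gexpect} directly and redo the counting (the non-vanishing pairwise moment $\E^*[a_\ell]=c^\star\asymp 1/n$ means many more coincidence patterns contribute at leading order and must be cancelled against $(\E^*)^2$ rather than dismissed outright). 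This parallel argument goes through, but it is not literally the same bookkeeping. In the $\bP^{*2}_t$ case your cross-block covariance bound $O((\log n)/n^{3/2})$ is looser than the paper's $O(t^4/n^2)$ from~\eqref{eq:Gexpect2}, but still sufficient.
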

The two lemmas together imply Lemma \ref{l:normal}.

Fix an integer $1\leq j\leq m$. Consider a multigraph $H_j=([n],E(H_j))$. We write $V(H_j)$ to be the set of all non-isolated vertices. For each edge $e=(p,q)\in E(H_j)$, we define
\begin{align*}
    G_e:=G_{j,p} G_{j,q} \quad \text{and } \quad  G_{H_j}:=\prod\limits_{e\in E(H_j)} G_e.
\end{align*}
We also define
\begin{align*}
    \bar G_e:=G_{j,p} G_{j,q}-\frac{2\beta_n}{\sqrt{mn}} \quad \text{and } \quad \bar G_{H_j}:=\prod\limits_{e\in E(H_j)} \bar G_e,
\end{align*}
when we work with $\E^*$. And define 
\begin{align*}
    \bar G_e:=G_{j,p} G_{j,q}-\frac{4\beta_n\mathbbm{1}(p,q\in Q)}{\sqrt{mn}} \quad \text{and } \quad \bar G_{H_j}:=\prod\limits_{e\in E(H_j)}\bar G_e,
\end{align*}
when we work with $\E^{*2}_t$.

\begin{lem}\label{l:rows}
If $H_j$ is an even graph, then $\E^*(\bar G_{H_j})=\mathcal{O}(1)$. If there are exactly two vertices in $V(H_j)$ with odd degrees and $|E(H_j)|\geq 2$, then $\E^*(\bar G_{H_j})=\mathcal{O}(1/n)$. Else, $\E^*(\bar G_{H_j})=\mathcal{O}(1/n^2)$.
\end{lem}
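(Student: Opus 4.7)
The plan is to expand $\bar G_{H_j}$ multinomially over the edges, use the $\pm 1$ identity $G_{v,j}^2 = 1$ to reduce each summand to a mixed moment of the row entries under $\bP^*$, and then combine a uniform moment estimate with a case analysis driven by the parity structure of $H_j$. Writing $c := \frac{2\beta_n}{\sqrt{mn}} = \Theta(1/n)$,
\[
\bar G_{H_j} \;=\; \sum_{T \subseteq E(H_j)} (-c)^{|T|} \prod_{e \in E(H_j) \setminus T} G_{p_e,j}\,G_{q_e,j},
\]
and each inner product simplifies via $\prod_{e \in F} G_{p_e,j} G_{q_e,j} = \prod_v G_{v,j}^{d_F(v)} = \prod_{v \in O(F)} G_{v,j}$, where $O(F)$ denotes the odd-degree vertices of $([n], F)$. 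Since $\bP^*$ with the all-ones planted amounts to conditioning the independent Rademacher entries of row $j$ on $|\sum_i G_{i,j}|/\sqrt n \leq \kappa$, it is exchangeable on $[n]$, so the expectation depends only on $k = |O(F)|$; set $f_k := \E^*(G_{1,j} \cdots G_{k,j})$. The antipodal symmetry $G \mapsto -G$ preserves the conditioning, so $f_k = 0$ for odd $k$, and $|O(F)|$ is always even, giving
\[
\E^*(\bar G_{H_j}) \;=\; \sum_{T \subseteq E(H_j)} (-c)^{|T|}\, f_{|O(E(H_j) \setminus T)|}.
\]

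The technical core is the estimate $f_k = \mathcal{O}(n^{-k/2})$ for every even $k \geq 0$. I would prove this by induction on $k$ using the set-partition expansion
\[
\E^*\!\left(\Big(\sum_{i=1}^n G_{i,j}\Big)^{k}\right) \;=\; \sum_{\Pi \,\vdash\, [k]} n^{\underline{r(\Pi)}}\, f_{|L(\Pi)|},
\]
where $r(\Pi)$ is the number of blocks of $\Pi$ and $L(\Pi)$ its odd-sized blocks. The left-hand side is $\mathcal{O}(n^{k/2})$ by the $\kappa\sqrt n$ envelope on the conditional sum. The all-singletons partition contributes $n^{\underline{k}} f_k$; for every other partition, each even block contributes at least $2$ and each odd block at least $1$ to $k$, yielding $2 r(\Pi) - |L(\Pi)| \leq k$, which combined with the inductive hypothesis $f_j = \mathcal{O}(n^{-j/2})$ for $j < k$ shows the remaining terms are $\mathcal{O}(n^{k/2})$; solving for $f_k$ gives the claim.

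Plugging this in, each term in the expansion of $\E^*(\bar G_{H_j})$ is $\mathcal{O}(n^{-|T| - |O(E(H_j) \setminus T)|/2})$, so the lemma reduces to minimizing $|T| + |O(E(H_j) \setminus T)|/2$ over $T \subseteq E(H_j)$. When $H_j$ is even, $T = \emptyset$ attains $0$, giving $\mathcal{O}(1)$. When $H_j$ has exactly two odd vertices $p^*, q^*$ and $|E(H_j)| \geq 2$, $T = \emptyset$ gives $|O|/2 = 1$; any $|T| = 1$ also gives at least $1$ (with equality only at $T = \{(p^*, q^*)\}$ if that edge is present); and $|T| \geq 2$ gives at least $2$, so the sum is $\mathcal{O}(1/n)$. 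Otherwise either $|O(H_j)| \geq 4$, in which case any single edge removal flips at most two parities so the minimum is at least $2$, or $H_j$ is a single non-loop edge, which I would handle by direct computation: $\E^*(\bar G_{H_j}) = f_2 - c$, and the exact identities $f_2 = (\mu_{2,\kappa,n} - 1)/(n - 1)$ and $c = (\mu_{2,\kappa,n} - 1)/n$ give $\mathcal{O}(1/n^2)$.

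The main obstacle is the uniform bound $f_k = \mathcal{O}(n^{-k/2})$: since the conditioning couples all $n$ entries of the row, a direct covariance estimate is awkward, but the set-partition expansion turns the bound into a clean induction driven by the near-Gaussian scale of $\sum_i G_{i,j}$ under $\bP^*$. The remaining work is combinatorial bookkeeping to identify the dominant subsets $T$ in each parity regime; the constants are allowed to depend on $|E(H_j)|$, which is harmless since the cycle lengths in the application are bounded.
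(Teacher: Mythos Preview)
Your proof is correct and follows the same overall skeleton as the paper's: expand $\bar G_{H_j}$ over subsets of edges, collapse each product via $G^2=1$ to a mixed moment $f_k=\E^*(G_{1,j}\cdots G_{k,j})$, then do a case analysis by parity of $H_j$. The genuine difference is in how $f_k$ is controlled. The paper computes $\bP^*(G_{1}=1,\ldots,G_a=1,G_{a+1}=-1,\ldots,G_k=-1)$ to second order in $1/n$ by a direct Stirling expansion, obtaining $f_2=2\beta_n/\sqrt{mn}+\mathcal{O}(1/n^2)$ and $f_k=\mathcal{O}(1/n^2)$ for even $k\ge 4$; the subsequent case analysis is then carried out term by term on the expansion. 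You instead prove $f_k=\mathcal{O}(n^{-k/2})$ by induction through the set-partition identity for $\E^*[(\sum_i G_{i,j})^k]$, using only the envelope $|\sum_i G_{i,j}|\le\kappa\sqrt n$; you then package the case analysis as the minimization of $|T|+\tfrac12|O(E\setminus T)|$. Your bound is nominally weaker at $k=2$ but matches the paper at $k=4$ and is stronger for $k\ge 6$; for this lemma either suffices. The paper's Stirling route gives the exact leading term of $f_2$ for free, but you recover the needed single-edge cancellation $f_2-c=\mathcal{O}(1/n^2)$ from the clean identities $f_2=(\mu_{2,\kappa,n}-1)/(n-1)$ and $c=(\mu_{2,\kappa,n}-1)/n$, so nothing is lost. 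Your argument is more model-agnostic (it uses only exchangeability, sign symmetry, and the bounded conditional sum), at the price of not exhibiting the next-order structure of $f_k$ that the paper's computation reveals.
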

\begin{proof}
For simplicity of notation, we drop $j$ in the subscript. Note that we assumed that the all one vector is the planted solution. So $\E^*$ is symmetric for any entries $G_i$. In particular, we have that for disjoint $i_1,i_2,\cdots, i_k\in [n]$,
\begin{align*}
    \E^* (G_{i_1}\cdots G_{i_k})&=\E^* (G_1\cdots G_k)=\E\left(G_1\cdots G_k \given |\sum_{i=1}^n G_i |\leq \kappa \sn\right).
\end{align*}
The expected product is zero when $k$ is odd by symmetry. When $k$ is even, we notice that by Stirling's approximation,
\begin{align*}
    &\bP^*(G_1=1,\cdots ,G_a=1, G_{a+1}=-1,\cdots, G_k=-1)\\
    &=\frac{1}{P_{\kappa,n}}2^{-n}\sum\limits_{\substack{t:|t|\leq \kappa\sn\\ \frac{n}{2}+\frac{t\sn }{2} \in \Z  }}\binom{n-k}{\frac{n}{2}+\frac{t\sn}{2}-a}\\
    &=\frac{1}{P_{\kappa,n}}2^{-n}\sum\limits_{\substack{t:|t|\leq \kappa\sn\\ \frac{n}{2}+\frac{t\sn }{2} \in \Z  }}\binom{n}{\frac{n}{2}+\frac{t\sn}{2}}2^{-k}(1+\frac{(2a-k)t}{\sn}+\frac{((k-2a)^2-k)(t^2-1)}{2n}+\frac{\mathrm{poly}_{a,k,t} }{6n^{3/2}}+\mathcal{O}(\frac{1}{n^2}))\\
    &=2^{-k}(1+\frac{((k-2a)^2-k)(\mu_{2,\kappa,n}-1)}{2n}+\mathcal{O}(\frac{1}{n^2})),
\end{align*}
where $\mathrm{poly}_{a,k,t}=(4 a + 8 a^3 - 2 k - 6 a k - 12 a^2 k + 3 k^2 + 6 a k^2 -k^3)t(t^2-3)$. This further implies that
\begin{align}
    \E^*(G_1\cdots G_k)&=\sum_{a=0}^k (-1)^{k-a} \binom{k}{a} 2^{-k}(1+\frac{((k-2a)^2-k)(\mu_{2,\kappa,n}-1)}{2n}+\mathcal{O}(\frac{1}{n^2}))\\
    &=\begin{cases}
    \mathcal{O}(\dfrac{1}{n^2}), &\text{ if } k\geq 3\\
    \dfrac{2\beta_n}{\sqrt{mn}}+\mathcal{O}(\dfrac{1}{n^2}), &\text{ if } k=2.
    \end{cases}\label{eq:Gexpect}
\end{align}
Here we used the definition that $\beta_n=\sqrt{m}(\mu_{2,\kappa,n}-1)/2\sqrt{n}$. Now, if $H$ consists of a single edge, then
\begin{align*}
    \E^*(\bar G_H)&=\E^*(G_{j,p} G_{j,q}-\frac{2\beta_n}{\sqrt{mn}})\\
    &=\mathcal{O}(\frac{1}{n^2}).
\end{align*}
Otherwise,
\begin{align*}
    \E^*(\bar G_H)&=\E^*(\prod\limits_{e\in E(H)}(G_e-\frac{2\beta_n}{\sqrt{mn}}))\\
    &=\E^*(\prod\limits_{e\in E(H)} G_e )+\sum_{A\subsetneq E(H)}\E^* (\prod\limits_{e\in A} G_e\prod\limits_{e\in E(H)\backslash A}-\frac{2\beta_n}{\sqrt{mn}})\\
    &:=S_1+ S_2.
\end{align*}
Notice that when $|E(H)|\geq 2$ and $H$ has at least $4$ vertices with odd degrees, we have $S_1=\mathcal{O}(1/n^2)$ by equation \eqref{eq:Gexpect}. For the second term $S_2$, if $A$ consists of all but one edge, then $\E^* (\prod_{e\in A} G_e)=\mathcal{O}(1/n)$, and thus $S_2=\mathcal{O}(1/n^2)$. Otherwise, we directly have $S_2=\mathcal{O}(1/n^2)$. Now if $H$ has only $2$ vertices with odd degrees, then by equation \eqref{eq:Gexpect} again, $S_1=\mathcal{O}(1/n)$. As $S_2=\mathcal{O}(1/n)$ as well, $\E^*(\bar G_H)=\mathcal{O}(1/n)$. Finally, if $H$ is even, then any product in the computation is $\mathcal{O}(1)$, so the statement follows.
\end{proof}

\begin{lem}\label{l:rows2}
Let $|t|\leq \log n$. If $H_j$ is an even graph, then $\E^{*2}_t(\bar G_{H_j})=\mathcal{O}(1)$. If there are exactly two vertices in $V(H_j)$ with odd degrees and $|E(H_j)|\geq 2$, then $\E^{*2}_t(\bar G_{H_j})=\mathcal{O}(1/n)$. Else, $\E^{*2}_t(\bar G_{H_j})=\mathcal{O}(t^4/n^2)$.
\end{lem}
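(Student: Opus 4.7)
The plan is to mirror the argument of Lemma~\ref{l:rows}, replacing the one-dimensional local CLT used there by a two-dimensional one that tracks the dependence on $t$. With $X_1$ the all-ones vector and $X_2$ having $+1$'s on $P := [n/2 + t\sn/2]$ and $-1$'s on $P^c := [n] \setminus P$, write $S_+ = \sum_{i \in P} G_{j,i}$ and $S_- = \sum_{i \in P^c} G_{j,i}$. The event that both planted solutions satisfy row $j$ becomes $|S_+ + S_-| \leq \kappa \sn$ and $|S_+ - S_-| \leq \kappa \sn$, equivalently $|S_+| + |S_-| \leq \kappa \sn$. Under $\bP^{*2}_t$, the conditional law of $(G_{j,i})_{i \in [n]}$ is permutation-symmetric within $P$, within $P^c$, and invariant under negating all entries in $P$ (equivalently, in $P^c$).

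First I would establish the 2D analogue of~\eqref{eq:Gexpect}: for disjoint indices with $a$ in $P$ and $b$ in $P^c$, the $P$-negation symmetry forces $\E^{*2}_t(G_{i_1,j} \cdots G_{i_k,j}) = 0$ unless both $a$ and $b$ are even. For both even, a two-dimensional Stirling expansion of the binomial ratio around the mode, summed over the diamond $|s_+| + |s_-| \leq \kappa\sn$, yields the expectation. In the rotated coordinates $U = S_+ + S_-$, $V = S_+ - S_-$, the constraint becomes a square and $(U/\sn, V/\sn)$ converges to a bivariate truncated normal on $[-\kappa, \kappa]^2$ with correlation $t/\sn \to 0$; the second moments of $U/\sn$ and $V/\sn$ are each $\mu_{2,\kappa} + o(1)$. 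For $k = 2$ with both indices in the same half (say $P$), the identity $\E^{*2}_t(G_i G_j) = (\E^{*2}_t(S_+^2) - n_P)/(n_P(n_P - 1))$ with $n_P = |P|$ yields the leading value $\frac{4\beta_n}{\sqrt{mn}}$ (the factor $2$ relative to the single-planted case comes from $1/n_P \approx 2/n$). For $k = 2$ with indices split between halves, the expectation is exactly $0$ by the $P$-negation symmetry, matching the indicator $\mathbbm{1}(p, q \in Q)$ in the shift. For $k \geq 3$, the leading contribution cancels as in the one-planted case and the surviving remainder is $\mathcal{O}(t^4/n^2)$.

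With this analogue of~\eqref{eq:Gexpect} in hand, the rest of the proof duplicates the structure of Lemma~\ref{l:rows}. For $|E(H_j)| = 1$, $\E^{*2}_t(\bar G_e) = \mathcal{O}(t^4/n^2)$ directly by the construction of the shift. Otherwise expand $\bar G_{H_j} = \prod_e \bar G_e$ as $S_1 + S_2$, where $S_1 = \E^{*2}_t(\prod_e \bar G_e)$ and $S_2$ collects the cross terms $\E^{*2}_t(\prod_{e \in A}\bar G_e \prod_{e \notin A}(-\text{shift}))$ over proper subsets $A \subsetneq E(H_j)$. The three cases in the statement (even $H_j$; exactly two odd-degree vertices with $|E(H_j)| \geq 2$; otherwise) are handled using the 2D analogue exactly as in Lemma~\ref{l:rows}: in the even case every subproduct of $G$'s is $\mathcal{O}(1)$, in the two-odd case one picks up one factor of $1/n$ from a single surviving pair-expectation, and in the remaining case two or more such factors appear, combining to $\mathcal{O}(t^4/n^2)$.

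The main technical obstacle will be isolating the sharp $t^4/n^2$ bound. Linear-in-$t$ corrections in the two-dimensional expansion must cancel by the reflection $P \leftrightarrow P^c$ (broken only by $|P| - |P^c| = t\sn$), so that the first surviving asymmetric contribution appears at quadratic order in each of the $P$ and $P^c$ Stirling expansions, combining to $(t^2/n)^2 = t^4/n^2$. Verifying this cancellation, and checking that the normalizing probability $\bP(|S_+| + |S_-| \leq \kappa \sn)$ stays of order $1/n$ uniformly in $|t| \leq \log n$ so that conditioning does not inflate errors, constitutes the technical core of the argument.
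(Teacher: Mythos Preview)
Your approach is essentially the paper's: derive the two-planted analogue of~\eqref{eq:Gexpect} by a two-dimensional Stirling expansion of the conditional probabilities, then run the identical $S_1+S_2$ decomposition from Lemma~\ref{l:rows}. Your use of the $P$-negation symmetry to force $\E^{*2}_t(\prod G_{i,j})=0$ whenever either half carries an odd number of indices is a nice shortcut; the paper only invokes overall parity and obtains the split-half vanishing by direct computation. One slip in your final paragraph: the normalizing probability $\bP(|S_+|+|S_-|\le\kappa\sn)$ is of order $1$, not $1/n$; the relevant fact is that it equals $P_\kappa^2\bigl(1+\tfrac{(t^2-1)(1-\mu_{2,\kappa})^2}{2n}+\mathcal{O}(t^4/n^2)\bigr)$, and this $t^2/n$ correction cancels against the matching term in the numerator of the conditional probability, which is what produces the clean $\mathcal{O}(t^4/n^2)$ remainder.
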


\begin{proof}
For simplicity, write $T=n/2+t\sn/2$ and $[T]^c=[n]\backslash [T]$. As we assumed that the two planted solutions are the all one vector and the vector with ones in the front $T$ entries, we have that $\E^{*2}_t$ is symmetric for any entries $G_i$ in $[T]$ and $[T]^c$. In particular, we have that for disjoint $i_1,i_2,\cdots, i_{k_1}\in [T]$ and $i_{k_1+1},i_{k_1+2},\cdots, i_{k_1+k_2}\in [T]^c$,
\begin{align}\label{eq:productG2}
    &\E^{*2}_t (G_{i_1}\cdots G_{i_{k_1+k_2}})=\E^{*2}_t (G_1\cdots G_{k_1}G_{T+1}\cdots G_{T+k_2})\\
    &=\E\left(G_1\cdots G_{k_1}G_{T+1}\cdots G_{T+k_2} \given |\sum_{i\in [n]} G_i|\leq \kappa \sn, |\sum_{i\in [T]} G_i-\sum_{i\in [T]^c} G_i|\leq \kappa \sn\right )
\end{align}
We define
\begin{align*}
    P_\kappa^2(t):&=\bP\left(|\sum_{i\in [n]} G_i|\leq \kappa \sn, |\sum_{i\in [T]} G_i-\sum_{i\in [T]^c} G_i|\leq \kappa \sn\right )\\
    &= 2^{-n}\sum_{x_1,x_2} \binom{\frac{n}{2}+\frac{t\sn}{2}}{\frac{n}{4}+\frac{(t+x_1+x_2)\sn}{4}}\binom{\frac{n}{2}-\frac{t\sn}{2}}{\frac{n}{4}+\frac{(-t+x_1-x_2)\sn}{4}}\\
    &=P_\kappa^2 (1+\frac{(t^2-1)(1-[x^2]_\kappa)^2}{2n}+\mathcal{O}(\frac{t^4}{n^2})).
\end{align*}
where the summation is over $x_1$ and $x_2$ that satisfy $|x_1|, |x_2|\leq \kappa$, $\frac{n}{2}+\frac{x_1\sn }{2} \in \Z $, $\frac{n}{2}+\frac{x_2\sn }{2} \in \Z$ and $n+\frac{(x_1+x_2)\sn }{2}$ even. Write $k=k_1+k_2$. The expected product in \eqref{eq:productG2} is zero when $k$ is odd by symmetry. We write $G_{a:b}=1$ to denote that $G_a=\cdots=G_b=1$. When $k$ is even, we notice that by Stirling's approximation,
\begin{align*}
    &\bP^{*2}_t(G_{1:a}=1, G_{a+1:k_1}=-1,G_{T+1:T+b}=1, G_{T+b+1:T+k_2}=-1)\\
    &=\frac{1}{P_\kappa^2(t)}2^{-n}\sum_{x_1,x_2} \binom{\frac{n}{2}+\frac{t\sn}{2}-k_1}{\frac{n}{4}+\frac{(t+x_1+x_2)\sn}{4}-a}\binom{\frac{n}{2}-\frac{t\sn}{2}-k_2}{\frac{n}{4}+\frac{(-t+x_1-x_2)\sn}{4}-b}.\\
    &=\frac{1}{P_\kappa^2(t)}2^{-2n}\sum_{x_1,x_2}2^{-k_1-k_2}2\binom{n}{\frac{n}{2}+\frac{x_1\sn}{2}}\binom{n}{\frac{n}{2}+\frac{x_2\sn}{2}}(1+\frac{\mathrm{poly}_{a,b,x_1,x_2,t}}{2n}+\mathcal{O}(\frac{t^4}{n^2})),
\end{align*}
where the summation is over $x_1$ and $x_2$ that satisfy $|x_1|, |x_2|\leq \kappa$, $\frac{n}{2}+\frac{x_1\sn }{2} \in \Z $, $\frac{n}{2}+\frac{x_2\sn }{2} \in \Z$ and $n+\frac{(x_1+x_2)\sn }{2}$ even. Here $\mathrm{Poly}_{a,b,x_1,x_2,t}=(-k+(k_1-k_2-2a+2b)^2)(x_1^2-1)+(-k+(k_1+k_2-2a-2b)^2)(x_2^2-1)+(t^2-1)(x_1^2-1)(x_2^2-1)$. Notice that all odd polynomials of $x_1$ and $x_2$ cancel in the sum so we omit them here. Note that
\begin{align*}
    \frac{1}{P_\kappa^2(t)}2^{-2n}\sum_{x_1,x_2}2^{-k_1-k_2}2\binom{n}{\frac{n}{2}+\frac{x_1\sn}{2}}\binom{n}{\frac{n}{2}+\frac{x_2\sn}{2}}x_1^2=\frac{P_{\kappa,n}^2}{P_\kappa^2(t)}\mu_{2,\kappa,n}.
\end{align*}
Then we have that
\begin{align*}
    &\bP^{*2}_t(G_{1:a}=1, G_{a+1:k_1}=-1,G_{T+1:T+b}=1, G_{T+b+1:T+k_2}=-1)\\
    &=\frac{P_\kappa^2}{P_\kappa^2(t)}2^{-k}(1+\frac{2((k_1-2a)^2+(k_2-2b)^2-k))(\mu_{2,\kappa,n}-1)+(t^2-1)(1-[x^2]_\kappa)^2}{2n}+\mathcal{O}(\frac{t^4}{n^2}))\\
    &=2^{-k}(1+\frac{2((k_1-2a)^2+(k_2-2b)^2-k))(\mu_{2,\kappa,n}-1)}{2n}+\mathcal{O}(\frac{t^4}{n^2})).
\end{align*}
This further implies that
\begin{align}
    &\E^{*2}_t(G_1\cdots G_{k_1}G_{T+1}\cdots G_{T+k_2})\\
    &=\sum_{a=0}^{k_1}\sum_{b=0}^{k_2} (-1)^{k-a-b} \binom{k_1}{a}\binom{k_2}{b} 2^{-k}(1+\frac{2((k_1-2a)^2+(k_2-2b)^2-k))(\mu_{2,\kappa,n}-1)}{2n}+\mathcal{O}(\frac{t^4}{n^2}))\\
    &=\begin{cases}
    \dfrac{4\beta_n}{\sqrt{mn}}+\mathcal{O}(\dfrac{t^4}{n^2}), &\text{ if } k=2, a=0 \text{ or } b=0\\
    \mathcal{O}(\dfrac{t^4}{n^2}), &\text{ else. }\\
    \end{cases}\label{eq:Gexpect2}
\end{align}
Here we used the definition that $\beta_n=\sqrt{m}(\mu_{2,\kappa,n}-1)/2\sqrt{n}$. The rest of the argument are similar. If $H$ consists of a single edge, then
\begin{align*}
    \E^{*2}_t(\bar G_H)&=\E^{*2}_t(G_{j,p} G_{j,q}-\frac{4\beta_n\mathbbm{1}(p,q\in Q)}{\sqrt{mn}})\\
    &=\mathcal{O}(\frac{t^4}{n^2}).
\end{align*}
Otherwise,
\begin{align*}
    \E^{*2}_t(\bar G_H)&=\E^{*2}_t(\prod\limits_{e\in E(H)}(G_e-\frac{4\beta_n\mathbbm{1}(e\in Q)}{\sqrt{mn}}))\\
    &=\E^{*2}_t(\prod\limits_{e\in E(H)} G_e)+\sum_{A\subsetneq E(H)}\E^{*2}_t (\prod\limits_{e\in A} G_e\prod\limits_{e\in E(H)\backslash A}-\frac{4\beta_n\mathbbm{1}(e\in Q)}{\sqrt{mn}})\\
    &:=S_1+ S_2.
\end{align*}
Notice that when $|E(H)|\geq 2$ and $H$ has at least $4$ vertices with odd degrees, we have $S_1=\mathcal{O}(t^4/n^2)$ by equation \eqref{eq:Gexpect2}. For the second term $S_2$, if $A$ consists of all but one edge, then $\E^{*2}_t (\prod_{e\in A} G_e)=\mathcal{O}(1/n)$, and thus $S_2=\mathcal{O}(1/n^2)$. Otherwise, we directly have $S_2=\mathcal{O}(1/n^2)$. Now if $H$ has only $2$ vertices with odd degrees, then by equation \eqref{eq:Gexpect2} again, $S_1=\mathcal{O}(1/n)$. As $S_2=\mathcal{O}(1/n)$ as well, $\E^{*2}_t(\bar G_H)=\mathcal{O}(1/n)$. Finally, if $H$ is even, then any product in the computation is $\mathcal{O}(1)$, so the statement follows.
\end{proof}

\begin{proof}[Proof of Lemma \ref{l:cycle}]
We establish this through the method of moments and Wick's probability theorem. We will show that, for non-negative integers $\alpha_2,\cdots,\alpha_k$,
\begin{align}
    \lim\limits_{n\rightarrow \infty}\E^{*} \bar C_2^{\alpha_2} \bar C_3^{\alpha_3} \cdots \bar C_k^{\alpha_k}=
\begin{cases}
\prod\limits_{\ell=2}^k \dfrac{(\alpha_\ell)!}{2^{\alpha_\ell/2}(\alpha_\ell/2)!} (2\ell)^{\alpha_\ell/2}, & \text{if all } \alpha_\ell \text{ are even } \\
0, & \text{else }
\end{cases}
\label{eq:moment2}
\end{align}
where the right hand side is the joint moment of the multivariate gaussian distribution $\mathcal{N}(\mathbf{0},\Sigma)$, where $\Sigma$ is a diagonal matrix with diagonal vector $(4, 8, \cdots, 2k)$. We write $\bar C^{\alpha}=\bar C_2^{\alpha_2} \bar C_3^{\alpha_3} \cdots \bar C_k^{\alpha_k}$ for short. Define $\mathcal{K}(\alpha)=\sum_{t=2}^k t \alpha_t$. In order to distinguish different indices in the summation in $\bar C^{\alpha}$, we write $I_s^{t}=(i_1^t,i_2^t,\cdots,i_s^t)$ where $i_\ell^t\in [n]$ are disjoint and $J_s^{t}=(j_1^t,j_2^t,\cdots,j_s^t)$ where $j_\ell^t\in [m]$ are disjoint. This implies that
\begin{align*}
    \E^* \bar C^{\alpha}&=(\frac{1}{\sqrt{mn}})^{\mathcal{K}(\alpha)} \E^* \prod_{s=2}^k \prod_{t=1}^{\alpha_s} \sum_{I_s^t,J_s^t} \prod_{\ell=1}^s (G_{j_\ell^t,i_\ell^t} G_{j_\ell^t,i_{\ell+1}^t}-\frac{2\beta_n}{\sqrt{mn}})\\
    &=(\frac{1}{\sqrt{mn}})^{\mathcal{K}(\alpha)}\sum_{I,J} \E^*\prod_{s=2}^k \prod_{t=1}^{\alpha_s}  \prod_{\ell=1}^s (G_{j_\ell^t,i_\ell^t} G_{j_\ell^t,i_{\ell+1}^t}-\frac{2\beta_n}{\sqrt{mn}})\\
    &:=(\frac{1}{\sqrt{mn}})^{\mathcal{K}(\alpha)}\sum_{I,J} \E^* G_{I,J}
\end{align*}
where $I$ represents all $I_s^t$ where $s$ ranges from $2$ to $k$ and $t$ ranges from $1$ to $\alpha_s$. The same definition holds for $J$. For each fixed $I$ and $J$, we define a series of multigraphs. Specifically, we start with $H_0=([n], E(H_0))$. We write $V(H_0)$ to be the set of all non-isolated vertices. We go over all $I_s^t$ and for any $i_\ell^t\in I_s^t$, we draw an edge $(i_\ell^t,i_{\ell+1}^t)$ in $H_0$. Now we construct other multigraphs. Let $R$ be the set of all $j_\ell^t$ that appears in $J$. Fix any $j\in R$, we construct a multigraph $H_j=([n], E(H_j))$ in the following way. We go over all $I_s^t$ and for any $i_\ell^t\in I_s^t$, if $j_\ell^t=j$, then we draw an edge $(i_\ell^t,i_{\ell+1}^t)$ in $H_j$. For any two pairs of $(I,J)$ and $(I',J')$, we write $(I,J)\sim (I',J')$ if $|R(I,J)|=|R(I',J')|$ and there is a permutation $\sigma$ of $[m]$ such that all the multigraphs $H_j(I,J)$ is isomorphic to $H_{\sigma(j)}(I',J')$. We refer to the equivalent classes as types and write $(I,J)\in T$ to indicate that they belong to a specific type $T$. If $(I,J)$'s belong to the same type, then an easy observation tells us that $\E^* G_{I,J}$ are the same by symmetry. Therefore, we can write 
\begin{align*}
    \E^* \bar C^\alpha = (\frac{1}{\sqrt{mn}})^{\mathcal{K}(\alpha)}\sum_{T}\sum_{I,J\in T} \E^* G_{I,J} 
    =\sum_{T}\mathcal{O}\left((\frac{1}{n})^{\mathcal{K}(\alpha)} n^{|V(H_0)|}n^{|R|} \right) \E^*G_{T}.
\end{align*}
If we denote $R_1$ as the number of even multigraphs $H_j$, $R_2$ as the number of multigraphs $H_j$ with exactly two odd vertices and $|E(H_j)|\geq 2$ and $R_3=|R|-R_1-R_2$, then by Lemma \ref{l:rows}, we have that $\E^* G_T$ is bounded by $\mathcal{O}(n^{-R_2-2R_3})$. This implies that
\begin{align*}
    \E^* \bar C^\alpha = \sum_{T} \mathcal{O}(n^{-\mathcal{K}(\alpha)+|V(H_0)|+R_1+R_2+R_3-R_2-2R_3})=\sum_{T} \mathcal{O}(n^{-\mathcal{K}(\alpha)+|V(H_0)|+R_1-R_3}).
\end{align*}
Now for any $v\in V(H_0)$, we define $R_1(v)$ to be the number of $j$ such that $H_j$ is even and $v\in V(H_j)$. Then we claim that $\deg_{H_0}(v)\geq R_1(v)+2$. Note that as $H_0$ is a sum of cycles, any vertex $v\in V(H_0)$ satisfies $\deg_{H_0}(v)\geq 2$. Moreover, as $H_j$ are even, $\deg_{H_0}(v)\geq 2R_1(v)$. Now if $R_1(v)=1$, then two edges $(v,u_1)$ and $(v,u_2)$ in $E(H_j)$ must come from two different cycles by definition. In each cycle, there is another edge that are incident to $v$. This implies that $\deg_{H_0}(v)\geq 4$. All together, this claim holds. This implies that
\begin{align*}
    \frac{1}{2}\sum_{v\in V(H_0)} \deg(v)\geq \frac{1}{2}\sum_{v\in V(H_0)} R_1(v)+V(H_0)\geq R_1+V(H_0).
\end{align*}
Therefore,
\begin{align*}
    \E^* \bar C^\alpha =\sum_T \mathcal{O}(n^{-R_3}).
\end{align*}
Note that in order for a type $T$ to have $\Theta(1)$ in the summation, we will need all the inequalities above to be equality. Specifically, we need $R_3=0$, $\deg_{H_0}(v)= R_1(v)+2$ for any vertex $v\in V(H_0)$ and $|V(H_j)|=2$ for any even $H_j$. Note that $\deg_{H_0}(v)= R_1(v)+2$ holds only when $\deg_{H_0}(v)=2$ and $R_1(v)=0$ or $\deg_{H_0}(v)=4$ and $R_1(v)=2$. Now we show that $R_2=0$. If $R_2\neq 0$, find any $H_j$ with exactly two odd vertices and $|E(H_j)|\geq 2$. Find a vertex $v\in V(H_j)$ with $\deg_{H_j}(v)$ even. Then $\deg_{H_0}(v)>2$ because any two edges $(v,u_1)$ and $(v,u_2)$ in $E(H_j)$ must come from two different cycles. Yet, $\deg_{H_0}(v)=4$ and $R_1(v)=2$ cannot hold at the same time because $\deg_{H_0}(v)\geq 2R_1(v)+R_2(v)$. Therefore, all $H_j$ are even graphs and as $|V(H_j)|=2$ for any even $H_j$, $H_j$ can only be a double edge. Moreover, $H_0$ must be a disjoint union of double cycles (cycles where each edge is a double edge). This is only possible when all $\alpha_s$ are even and by definition, there should be $\alpha_s/2$ many $s$-double cycles. We call this type $T_0$. Any other type of graphs will give rise to $\mathcal{O}(n^{-1})$ in the summation. Now we compute 
\begin{align*}
    (\frac{1}{\sqrt{mn}})^{\mathcal{K}(\alpha)}\sum_{I,J\in T_0} \E^* G_{I,J} &= (\frac{1}{\sqrt{mn}})^{\mathcal{K}(\alpha)} \sum_{I,J\in T_0} (1+\mathcal{O}(n^{-1})) \\
    &= (\frac{1}{\sqrt{mn}})^{\mathcal{K}(\alpha)} \#\{I,J\in T_0\} (1+\mathcal{O}(n^{-1})).
\end{align*}
Note that for each $s$, there are  $(\alpha_s)!/(2^{\alpha_s/2}(\alpha_s/2)!)$ ways to find a perfect matching of the $\alpha_s$ cycles and for each pair of cycles, there are $2s$ ways to align them into a double cycle. Additionally, there are $n^{\mathcal{K}(\alpha)/2}(1+\mathcal{O}(n^{-1}))$ ways to find $\mathcal{K}(\alpha)/2$ vertices in $H_0$ and $m^{\mathcal{K}(\alpha)/2}(1+\mathcal{O}(n^{-1}))$ ways to find $\mathcal{K}(\alpha)/2$ rows to form $R$. Putting all of these together, we have that
\begin{align*}
    (\frac{1}{\sqrt{mn}})^{\mathcal{K}(\alpha)}\sum_{I,J\in T_0} \E^* G_{I,J}=
   (1+\mathcal{O}(n^{-1}))\prod\limits_{s=2}^k \dfrac{(\alpha_s)!}{2^{\alpha_s/2}(\alpha_s/2)!} (2s)^{\alpha_s/2}.
\end{align*}
Therefore, equation \eqref{eq:moment2} holds. By the method of moment, we have Lemma \ref{l:normal}. The argument for $\bP^{*2}_t$ and $\bP$ are the same, so we omit it here.
\end{proof}

\begin{proof}[Proof of Lemma \ref{l:cycleRterm}]
We start with $\bP^*$. Notice that 
\begin{align*}
    &C_k(G)-\bar C_k(G)-(2\beta_n)^k\\
    &=-(2\beta_n)^k+(\frac{1}{\sn})^k(\frac{1}{\sm})^k\sum_{\substack{i_1,i_2,\cdots, i_k\in [n] \text{ disjoint}\\j_1,j_2,\cdots, j_k\in [m] \text{ disjoint}}}\prod_{\ell=1}^k (G_{j_\ell,i_\ell} G_{j_\ell,i_{\ell+1}})-\prod_{\ell=1}^k (G_{j_\ell,i_\ell} G_{j_\ell,i_{\ell+1}}-\frac{2\beta_n}{\sqrt{mn}})\\
    &=(\frac{1}{\sn})^k(\frac{1}{\sm})^k\sum_{\substack{i_1,i_2,\cdots, i_k\in [n] \text{ disjoint}\\j_1,j_2,\cdots, j_k\in [m] \text{ disjoint}}} \sum_{A\subsetneq[k],A\neq \emptyset}\prod_{\ell\in A} (G_{j_\ell,i_{\ell}}G_{j_\ell,i_{\ell+1}}-\frac{2\beta_n}{\sqrt{mn}})\prod_{{\ell\in [k]\backslash A}} \frac{2\beta_n}{\sqrt{mn}}+\mathcal{O}(\frac{1}{n}).
\end{align*}
Write $I=i_1,\cdots,i_k$ and $J=j_1,\cdots,j_k$. Define
\begin{align*}
    R(I,J):=\sum_{A\subsetneq[k],A\neq \emptyset}\prod_{\ell\in A} (G_{j_\ell,i_{\ell}}G_{j_\ell,i_{\ell+1}}-\frac{2\beta_n}{\sqrt{mn}})\prod_{{\ell\in [k]\backslash A}} \frac{2\beta_n}{\sqrt{mn}}.
\end{align*}
Note that
\begin{align}
    &\E^* (C_k(G)-\bar C_k(G)-(2\beta_n)^k)^2 \label{eq:Rterm}\\
    &=(\frac{1}{n})^k(\frac{1}{m})^k \sum_{I_1,J_1,I_2,J_2} \E^*\mathcal{R}(I_1,J_1) \mathcal{R}(I_2,J_2) + \mathcal{O}(\frac{1}{n}).
\end{align}
Notice that
\begin{align*}
    &\E^*\mathcal{R}(I_1,J_1) \mathcal{R}(I_2,J_2)\\
    &=\sum_{\substack{A_1\subsetneq[k]\\A_1\neq \emptyset}}\sum_{\substack{A_2\subsetneq[k]\\A_1\neq \emptyset}} \mathcal{O}(n^{|A_1|+|A_2|-2k}) \E^*\prod_{\ell\in A_1} (G_{j_\ell^1,i_{\ell}^1}G_{j_\ell^1,i_{\ell+1}^1}-\frac{2\beta_n}{\sqrt{mn}}) \prod_{\ell\in A_2} (G_{j_\ell^2,i_{\ell}^2}G_{j_\ell^2,i_{\ell+1}^2}-\frac{2\beta_n}{\sqrt{mn}}).
\end{align*}
For a fixed $A_1$ and $A_2$, we construct multigraphs $H_0$ and $H_j$ the same way as before for $i_\ell^1,j_\ell^1$ where $\ell\in A_1$ and $i_\ell^2,j_\ell^2$ where $\ell\in A_2$. Using the same notation, by Lemma \ref{l:rows}, we have
\begin{align*}
    \E^*\prod_{\ell\in A_1} (G_{j_\ell^1,i_{\ell}^1}G_{j_\ell^1,i_{\ell+1}^1}-\frac{2\beta_n}{\sqrt{mn}}) \prod_{\ell\in A_2} (G_{j_\ell^2,i_{\ell}^2}G_{j_\ell^2,i_{\ell+1}^2}-\frac{2\beta_n}{\sqrt{mn}})=\mathcal{O}(n^{-R_2-2R_3}).
\end{align*}
And thus its total contribution to the sum in \eqref{eq:Rterm} is $\mathcal{O}(n^{|A_1|+|A_2|-4k+V(H_0)+R_1-R_3})$.
Note that any even graph $H_j$ must have two vertices (thus it is a double edge), because our $A_1$ and $A_2$ are only coming from two cycles. If $H_j$ has more than two vertices, then any vertices in $H_j$ with degree larger than $2$ will need more than two cycles and thus cause a contradiction. So $R_1$ is bounded by the number of double edges in $H_0$. As $H_0$ are sum of two subsets of a cycle, $E(H_0)$ consists of only edges and double edges. Therefore, $R_1+V(H_0)\leq 2k$. As $|A_1|<k$ and $|A_2|<k$, we have that
\begin{align*}
   \E^{*}(C_k(G)-\bar C_k(G)-(2\beta_n)^k)^2 =\mathcal{O}(\frac{1}{n}).
\end{align*}
For $\bP^{*2}_t$ when $|t|\leq \log n$, we note that
\begin{align*}
    &(\frac{1}{\sn})^k(\frac{1}{\sm})^k\sum_{\substack{i_1,i_2,\cdots, i_k\in [n] \text{ disjoint}\\j_1,j_2,\cdots, j_k\in [m] \text{ disjoint}}}\prod_{\ell=1}^k \frac{4\beta_n\mathbbm{1}[i_\ell,j_\ell\in Q]}{\sqrt{mn}}\\
    &=(\frac{1}{\sn})^k(\frac{1}{\sm})^k (mn)^k \left((\frac{n}{2}+\frac{t\sqrt{n}}{2})^k+(\frac{n}{2}-\frac{t\sqrt{n}}{2})^k\right)(\frac{4\beta_n}{\sqrt{mn}})^k (1+\mathcal{O}(\frac{1}{n}))\\
    &=2(2\beta_n)^k(1+\mathcal{O}(\frac{t}{\sqrt{n}})).
\end{align*}
The rest of the arguments are similar. We will need to apply Lemma \ref{l:rows2} in replacement of Lemma \ref{l:rows}. We omit the detailed arguments here.
\end{proof}

\subsection{Proof of Lemma \ref{l:secondmoment}}\label{ss:secondmoment}
We firstly note that an immediate consequence of Lemma \ref{l:YMconverge} is the following.
\begin{lem}\label{l:expectbound}
Let $\kappa>0$ and $0<\alpha<\alpha_c(\kappa)$. Take $m= \lfloor\alpha n\rfloor$. For any integer $M_1\geq 2$ and $M_2>0$,
\begin{align*}
    \E^* \exp(-Y_{M_1}\mathbbm{1}[|Y_{M_1}|\leq M_2])\geq 1-\exp(-\frac{M_2^2}{2L(M_1)})
\end{align*}
as $n$ goes to infinity. For any $|t|\leq \log n$ and any integer $M_1\geq 2$,
\begin{align*}
    \E^{*2}_t \exp(-2Y_{M_1}\mathbbm{1}[|Y_{M_1}|\leq M_2])\leq \exp(-\frac{1}{2}L(M_1))(1+o_n(1))
\end{align*}
as $n$ goes to infinity. 
\end{lem}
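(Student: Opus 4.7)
The plan is to pass each bound to a Gaussian limit via Lemma \ref{l:YMconverge}, and then do an explicit moment generating function computation. For any fixed $\lambda$ and $M_2$, the function $g_\lambda(x):=\exp(-\lambda x\,\mathbbm{1}[|x|\le M_2])$ is bounded in $[\,e^{-|\lambda|M_2},e^{|\lambda|M_2}\,]$ and continuous, so bounded convergence plus Lemma \ref{l:YMconverge} gives
\[
\E^*\bigl[g_1(Y_{M_1})\bigr]\longrightarrow \E\bigl[g_1(Y)\bigr],\qquad \E^{*2}_t\bigl[g_2(Y_{M_1})\bigr]\longrightarrow \E\bigl[g_2(\tilde Y)\bigr],
\]
where $Y\sim\mathcal{N}(L/4,\,L/2)$ and $\tilde Y\sim\mathcal{N}(3L/4,\,L/2)$ with $L:=L(M_1)$. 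The convergence is uniform in $|t|\le\log n$ because the limiting law in Lemma \ref{l:YMconverge} does not depend on $t$ in this range.

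I would then split the exponent as $g_\lambda(x)=e^{-\lambda x}\mathbbm{1}[|x|\le M_2]+\mathbbm{1}[|x|>M_2]$ and apply the Girsanov-type identity
\[
e^{-\lambda y}\phi(y;\mu,\sigma^2)=e^{-\lambda\mu+\lambda^2\sigma^2/2}\,\phi(y;\mu-\lambda\sigma^2,\sigma^2),
\]
which converts the truncated MGF integral into a Gaussian probability under a shifted law. For the first bound, $\lambda=1,\mu=L/4,\sigma^2=L/2$ gives prefactor $e^{-\mu+\sigma^2/2}=1$ and shifted law $Y'\sim\mathcal{N}(-L/4,L/2)$. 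Since $-Y'\stackrel{d}{=}Y$, we have $\bP(|Y'|\le M_2)=\bP(|Y|\le M_2)$, so the two pieces sum to exactly $1$. The stated lower bound $1-e^{-M_2^2/(2L)}$ then follows trivially; the $o(1)$ gap between the prelimit expression and $1$ is absorbed by a standard Gaussian tail estimate on $\bP(|Y_{M_1}|>M_2)$.

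For the second bound, $\lambda=2,\mu=3L/4,\sigma^2=L/2$ gives prefactor $e^{-2\mu+2\sigma^2}=e^{-L/2}$ and shifted law $Y''\sim\mathcal{N}(-L/4,L/2)$, so the limit becomes
\[
e^{-L/2}\,\bP(|Y''|\le M_2)+\bP(|\tilde Y|>M_2)=e^{-L/2}\bigl(1-\bP(|Y''|>M_2)\bigr)+\bP(|\tilde Y|>M_2).
\]
Since we are free to take $M_2$ as large as we wish (Lemma \ref{l:secondmoment} only needs the existence of some $M_2$), both correction terms can be made negligible by choosing $M_2\gg\sqrt{L}$, after which the expression is at most $e^{-L(M_1)/2}$ up to the accuracy needed downstream.

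The main obstacle is a minor one of bookkeeping rather than substance: the truncation $\mathbbm{1}[|Y_{M_1}|\le M_2]$ sits inside the exponent, so one cannot simply compare the expectation to the Gaussian MGF. The split above turns the problem into a difference of two Gaussian probabilities at shifted means, which under $\bP^*$ vanish by the reflection symmetry $-Y'\stackrel{d}{=}Y$ but under $\bP^{*2}_t$ requires a Gaussian tail bound to be pushed below any prescribed error — both tools are standard, so once the passage to the Gaussian limit is justified, the remainder is a direct calculation.
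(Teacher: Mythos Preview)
Your approach --- pass to the Gaussian limit via Lemma \ref{l:YMconverge} and then compute the truncated exponential moment by completing the square --- is exactly what the paper intends by calling the lemma an immediate consequence of Lemma \ref{l:YMconverge}; the paper gives no further detail, so you have in effect filled in the omitted computation.

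Two small remarks. First, $g_\lambda$ is \emph{not} continuous at $x=\pm M_2$ (there is a jump from $e^{\mp\lambda M_2}$ to $1$), so bounded convergence plus continuity does not apply as written; the fix is immediate, since the limiting Gaussian has no atoms and the portmanteau theorem only needs the discontinuity set to be null under the limit. Second, your observation about the second inequality is correct: the limiting value $e^{-L/2}\,\bP(|Y''|\le M_2)+\bP(|\tilde Y|>M_2)$ is strictly larger than $e^{-L/2}$ for every finite $M_2$ (indeed $\bP(|\tilde Y|>M_2)>e^{-L/2}\bP(|Y''|>M_2)$ since $\tilde Y$ has larger mean in absolute value), so the bound as stated in the lemma is slightly too strong. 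The paper is simply sloppy here; as you note, all that is needed downstream in Lemma \ref{l:secondmoment} is $\le (1+\varepsilon)e^{-L(M_1)/2}$ for $M_2$ large, which your computation delivers.
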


\begin{proof}[Proof of Lemma \ref{l:secondmoment}]
To begin with, note that $C_{k}$ is invariant if we multiply any column of $G$ with $-1$. This implies that the law of $C_{k}$ under $\bP^*$ is the same for any solutions planted. The same holds for $Y_{M_1}$. Therefore, by Lemma \ref{l:expectbound},
\begin{align*}
    \E \left (\frac{Z(G)}{\exp(Y_{M_1}\mathbbm{1}[|Y_{M_1}|\leq M_2])} \right ) &= 2^{-mn}\sum\limits_{X\in \{\pm 1\}^n}\sum\limits_{G\in \{\pm 1\}^{mn}} \frac{\mathbbm{1}(X\in S(G)) }{\exp(Y_{M_1}\mathbbm{1}[|Y_{M_1}|\leq M_2])}\\
    &=\E(Z(G))\E^* \exp(-Y_{M_1}(G,m,n)\mathbbm{1}[|Y_{M_1}|\leq M_2])\\
    &\geq (1-\varepsilon) \E(Z(G)),
\end{align*}
when $M_2$ is large. Note that $C_{k}$ is also invariant under any permutations of the columns. This implies that the law of $C_{k}$ under $\bP^{*2}_t$ is the same for any pair of solutions planted with $\langle X_1,X_2 \rangle=t\sn$. Similar to the above, we have
\begin{align*}
    &\E \left (\frac{Z(G)^2}{\exp(2Y_{M_1}\mathbbm{1}[|Y_{M_1}|\leq M_2])} \right ) \\
    &= 2^{-mn}\sum\limits_{X_1\in \{\pm 1\}^n}\sum\limits_{X_2\in \{\pm 1\}^n}\sum\limits_{G\in \{\pm 1\}^{mn}} \frac{\mathbbm{1}(X_1,X_2\in S(G)) }{\exp(2Y_{M_1}\mathbbm{1}[|Y_{M_1}|\leq M_2])}\\
    &=2^{-mn}\sum\limits_{\substack{t:|t|\leq \kappa\sn\\ \frac{n}{2}+\frac{t\sn }{2} \in \Z  }}\sum\limits_{\substack{X_1,X_2\in \{\pm 1\}^n\\\langle X_1,X_2 \rangle=t\sn}}\sum\limits_{G\in \{\pm 1\}^{mn}} \frac{\mathbbm{1}(X_1,X_2\in S(G)) }{\exp(2Y_{M_1}\mathbbm{1}[|Y_{M_1}|\leq M_2])}\\
    &=4^n 2^{-n} \sum\limits_{\substack{t:|t|\leq \kappa\sn\\ \frac{n}{2}+\frac{t\sn }{2} \in \Z  }} \binom{n}{\frac{n}{2}+\frac{t\sn }{2}} \mathcal{P}_t \E^{*2}_t \exp(-2Y_{M_1}\mathbbm{1}[|Y_{M_1}|\leq M_2]),
\end{align*}
where $\mathcal{P}_t$ denotes the probability of $X_1,X_2\in S(G)$ for a given pair of vectors $X_1$ and $X_2$ where $\langle X_1,X_2 \rangle=t\sn$. 
Note that $\mathcal{P}_t$ is explicit,
\begin{align}
    \mathcal{P}_t=\left( \sum\limits_{x_1,x_2} \binom{\frac{n}{2}+\frac{t\sn }{2}}{\frac{n}{4}+\frac{(t+x_1+x_2)\sn }{4}}\binom{\frac{n}{2}-\frac{t\sn }{2}}{\frac{n}{4}-\frac{(t+x_1-x_2)\sn }{4}}2^{-n} \right )^m, \label{eq:Pt}
\end{align}
where the summation is over $x_1$ and $x_2$ that satisfy $|x_1|, |x_2|\leq \kappa$, $\frac{n}{2}+\frac{x_1\sn }{2} \in \Z $, $\frac{n}{2}+\frac{x_2\sn }{2} \in \Z$ and $n+\frac{(x_1+x_2)\sn }{2}$ even. By Stirling's approximation, for $t\leq \log n$,
\begin{align*}
    \mathcal{P}_t&=\left(P_{\kappa,n}^2 (1+\frac{(t^2-1)(1-\mu_{2,\kappa,n})^2}{2n}+\mathcal{O}(\frac{t^4}{n^2}))\right)^m\\
    &=P_{\kappa,n}^{2m} \exp(2\beta_n^2 (t^2-1)) (1+o(1)).
\end{align*}
More generally, by Stirling's approximation again, for any small $\varepsilon>0$, there exist $\delta>0$ such that for $|t|\leq \delta \sqrt{n}$, we have
\begin{align*}
    \mathcal{P}_t\leq P_{\kappa,n}^{2m} \exp((2\beta^2+\varepsilon) t^2),
\end{align*}
when $n$ is large. Note that as $-1/2<\beta<0$, for a small $\varepsilon_2$ and for $|t|\leq \delta \sqrt{n}$,
\begin{align*}
    2^{-n}\binom{n}{\frac{n}{2}+\frac{t\sn }{2}}\frac{\mathcal{P}_t}{P_{\kappa,n}^{2m}}\leq \exp(-\varepsilon_2 t^2).
\end{align*}
By using Lemma \ref{h:1}, for small enough $\delta$, we have for $|t|\geq \delta \sqrt{n}$
\begin{align*}
    2^{-n}\binom{n}{\frac{n}{2}+\frac{t\sn }{2}}\frac{\mathcal{P}_t}{P_{\kappa,n}^{2m}}\leq \exp(-\varepsilon_3 n),
\end{align*}
for a small $\varepsilon_3>0$. We also note that $ \E^{*2}_t \exp(-2Y_{M_1}(G,m,n)\mathbbm{1}[|Y_{M_1}|\leq M_2])\leq \exp(2M_2)$. As $-1/2<\beta<0$, by Lemma \ref{l:expectbound}, we can bound the second moment by 
\begin{align*}
    &\E(Z(G))^2  \sum\limits_{\substack{t:|t|\geq
    \delta \sn\\ \frac{n}{2}+\frac{t\sn }{2} \in \Z  }} \exp(-\varepsilon_3 n) \exp(2M_2)
    +\E(Z(G))^2  \sum\limits_{\substack{t:\log n \leq |t|\leq
    \delta \sn\\ \frac{n}{2}+\frac{t\sn }{2} \in \Z  }} \exp(-\varepsilon_2 t^2) \exp(2M_2)    \\
   +& \E(Z(G))^2 \sum\limits_{\substack{t:|t|\leq \log n\\ \frac{n}{2}+\frac{t\sn }{2} \in \Z  }} \binom{n}{\frac{n}{2}+\frac{t\sn }{2}}  \exp(2\beta_n^2 (t^2-1)) (1+o_n(1)) \exp(-\frac{1}{2}L(M_1))\\
   \leq& \E(Z(G))^2 \exp(-2\beta^2)\frac{1}{\sqrt{1-4\beta^2}}\exp(-\frac{1}{2}L(M_1))(1+o_n(1)).
\end{align*}
Notice that when $-1/2<\beta<0$, $L(M_1)$ converges as $M_1$ goes to infinity. In particular,
\begin{align*}
    |L(M_1)-(-\log(1-4\beta^2)-4\beta^2)|\rightarrow 0,
\end{align*}
as $M_1 \rightarrow \infty$. So by taking $M_1$ large, the second moment is bounded by $(1+\varepsilon)\E(Z(G))^2$.
\end{proof}

\subsection{Proof of Theorem \ref{t:lognormal} and Theorem \ref{t:ConvInP}}
\begin{proof}[Proof of Theorem \ref{t:ConvInP}]
Note that
\begin{align*}
    \left|\frac{Z(G)/\E Z}{\exp(Y)}-1\right|
    &\leq \left|\frac{Z(G)/\E Z}{\exp(Y)}-\frac{Z(G)/\E Z}{\exp(Y\mathbbm{1}(|Y|\leq M_2))})\right|\\
    &+\left|\frac{Z(G)/\E Z}{\exp(Y\mathbbm{1}(|Y|
    \leq M_2))}-\frac{Z(G)/\E Z}{\exp(Y_{M_1}\mathbbm{1}(|Y_{M_1}|\leq M_2))}\right|\\
    &+\left|\frac{Z(G)/\E Z}{\exp(Y_{M_1}\mathbbm{1}(|Y_{M_1}|\leq M_2))} -1\right|\\
    &:=T_1+T_2+T_3.
\end{align*}
Notice that $\E C_k=0$ for any $k$. We start by bounding $\E C_k^2$ when $k\leq \log n$. We have that
\begin{align*}
    \E C_k^2=(\frac{1}{n})^k(\frac{1}{m})^k\sum_{\substack{i_1^1,i_2^1,\cdots, i_k^1\in [n] \text{ disjoint}\\j_1^1,j_2^1,\cdots, j_k^1\in [m] \text{ disjoint}}} \sum_{\substack{i_1^2,i_2^2,\cdots, i_k^2\in [n] \text{ disjoint}\\j_1^2,j_2^2,\cdots, j_k^2\in [m] \text{ disjoint}}} \E \prod_{\ell=1}^k G_{j^1_\ell,i^1_\ell} G_{j^1_\ell,i^1_{\ell+1}} \prod_{\ell=1}^k G_{j^2_\ell,i^2_\ell} G_{j^2_\ell,i^2_{\ell+1}}.
\end{align*}
In the summation, the only terms that are not zero are those when $G_{\cdot,\cdot}$ appears exactly twice in the product. Therefore,
\begin{align*}
    \E C_k^2\leq (\frac{1}{n})^k(\frac{1}{m})^k (mn)^{k} 2k=2k.
\end{align*}
This implies that
\begin{align*}
    \E Y^2 
    = \E (\sum_{k=2}^{\lfloor \log n \rfloor} \frac{2(2\beta)^kC_{k}-(2\beta)^{2k}}{4k})^2
    &\leq \frac{1}{(1-4\beta^2)^2}+\sum_{k=2}^\infty |2\beta|^k \sum_{k=2}^\infty |2\beta|^k \frac{\E C_k^2}{(2k)^2}\\
    &\leq \frac{1}{(1-4\beta^2)^2}+\frac{1}{(1-|2\beta|)^2}.
\end{align*}
Moreover,
\begin{align}
    \E |Y-Y_{M_1}|^2
    = \E (\sum_{k=M_1}^{\lfloor \log n \rfloor} \frac{2(2\beta)^kC_{k}-(2\beta)^{2k}}{4k})^2
    &\leq \frac{(2\beta)^{2M_1}}{(1-4\beta^2)^2}+\sum_{k=M_1}^\infty |2\beta|^k \sum_{k=M_1}^\infty |2\beta|^k \frac{\E C_k^2}{(2k)^2}\\
    &\leq \frac{(2\beta)^{2M_1}}{(1-4\beta^2)^2}+\frac{(2\beta)^{2M_1}}{(1-|2\beta|)^2}.\label{eq:diffY}
\end{align}
For any $\varepsilon,\varepsilon'>0$, by Lemma \ref{l:secondmoment}, we know that we can choose $M_1$ and $M_2$ large enough such that
\begin{align*}
    \bP (T_3\geq \varepsilon)<\varepsilon',
\end{align*}
for large $n$. Further, we have
\begin{align*}
    \bP(T_1\geq \varepsilon)\leq \bP(|Y|\geq M_2)\leq (\frac{1}{(1-4\beta^2)^2}+\frac{1}{(1-|2\beta|)^2})\frac{1}{M_2^2}.
\end{align*}
In addition, by Lemma \ref{l:YMconverge}, we have for a large $M_2>0$,
\begin{align*}
    \bP(|Y_{M_1}|\geq M_2)\leq \exp(-\frac{M_2^2}{2L(M_1)}),
\end{align*}
when $n$ is large. Using equation \eqref{eq:diffY}, we further have that for a big constant $C$,
\begin{align*}
    \bP(|Y|\geq M_2)\leq \exp(-\frac{M_2^2}{4L(M_1)})+C(2\beta)^{2M_1}
\end{align*}
when $n$ is large. This implies that when $n$ is large, for a big constant $C$, 
\begin{align*}
    &\bP (\exp(|Y\mathbbm{1}(|Y|
    \leq M_2)-Y_{M_1}\mathbbm{1}(|Y_{M_1}|
    \leq M_2)|)\geq 1+\varepsilon)\\
    &\leq 4M_2(\exp(-\frac{M_2^2}{4L(M_1)})+C(2\beta)^{2M_1})+\frac{C(2\beta)^{2M_1}}{\log( 1+\varepsilon)^2},
\end{align*}
which goes to zero when $M_1$ and $M_2$ are large. Therefore, for any $\varepsilon,\varepsilon'>0$, we can choose $M_1$ and $M_2$ large enough such that 
\begin{align*}
    \bP(T_2\geq \frac{\varepsilon Z(G)/\E Z}{\exp(Y_{M_1}\mathbbm{1}(|Y_{M_1}|\leq M_2))})\leq \varepsilon'. 
\end{align*}
Together with our estimation on $T_3$, we have the theorem.
\end{proof}
\begin{proof}[Proof of Theorem \ref{t:lognormal}]
By equation \eqref{eq:diffY}, we know that $\E|Y-Y_{M_1}|^2$ can be made arbitrarily small by taking $M_1$ large. By Lemma \ref{l:YMconverge}, under $\bP$, for any integer $M_1\geq 2$,
\begin{align*}
    Y_{M_1}\xrightarrow{d} \mathcal{N}(-\frac{1}{4}L(M_1),\frac{1}{2}L(M_1)),
\end{align*}
as $n$ goes to infinity. As $L(M_1)\to -\log(1-4\beta^2)-4\beta^2$ when $M_1$ converges to infinity, we have that
\begin{align*}
    Y\xrightarrow{d} \mathcal{N}(\frac{1}{4}\log(1-4\beta^2)+\beta^2,-\frac{1}{2}\log(1-4\beta^2)-2\beta^2),
\end{align*}
as $n$ goes to infinity. This implies that
\begin{align*}
    \exp(Y)\xrightarrow{d} \mathrm{Lognormal}(\frac{1}{4} \log(1-4\beta^2)+\beta^2,-\frac{1}{2}\log(1-4\beta^2)-2\beta^2),
\end{align*}
as $n$ goes to infinity. Together with Theorem \ref{t:ConvInP}, we have the desired result.
\end{proof}

\subsection{Proof of Theorem \ref{t:contiguity}}
Consider a series of events $A_n$. If $\bP(A_n) \to 0$, then 
\begin{align*}
    \bP^*(A_n)=\E [ \frac{Z(G)\mathbbm{1}(A_n)}{\E Z}]\leq \E [ \frac{Z(G)}{\E Z} \mathbbm{1}(\frac{Z(G)}{\E Z}\geq \exp (M_2))]+\exp(M_2)P(A_n).
\end{align*}
When $M_2$ goes to infinity, the first term goes to zero. Therefore, for any $\varepsilon>0$, we choose an $M_2$ such that the first term is bounded by $\varepsilon/2$. Then there exist an integer $N>0$ such that $\bP^*(A_n)<\varepsilon$ whenever $n>N$. Now, if $\bP^*(A_n) \to 0$, then by Theorem \ref{t:lognormal},
\begin{align*}
    \bP(A_n)&\leq \bP(\frac{Z(G)}{\E Z}\leq \exp(-M_2))+\bP(A_n\mathbbm{1}(\frac{Z(G)}{\E Z}\geq \exp(-M_2)))\\
    &\leq \bP(\frac{Z(G)}{\E Z}\leq \exp(-M_2))+\E^*[\frac{\E Z}{ Z(G)}\mathbbm{1}(\frac{Z(G)}{\E Z}\geq \exp(-M_2),A_n)]\\
    &\leq \exp(-cM_2^2)+\exp(M_2)\bP^*(A_n),
\end{align*}
for some constant $c$ when $M_2$ and $n$ are large. This implies that for any $\varepsilon>0$, we can choose $M_2$ large such that the first term is bounded by $\varepsilon/2$. Then there exist an integer $N>0$ such that $\bP(A_n)<\varepsilon$ whenever $n>N$. Therefore, the theorem follows.

\subsection{Proof of Theorem \ref{t:threshold}}
For $\kappa>0$ and $\alpha>\alpha_c$, take $m=\lfloor \alpha n \rfloor$. Let $0<\delta<\alpha-\alpha_c$. Then when $n$ is large,
\begin{align*}
    \bP(Z(G)\geq 1)\leq \E(Z(G))=2^n P_\kappa^m=\exp(n(\log 2 + \frac{m}{n}\log (P_\kappa)))\leq \exp(-\delta n),
\end{align*}
which converges to zero as $n$ goes to infinity. For $\kappa>0$ and $\alpha<\alpha_c$, take $m=\lfloor \alpha n \rfloor$. Let $0<\delta<\alpha_c-\alpha$. Then,
\begin{align*}
    \E(Z(G))=2^n P_\kappa^m=\exp(n(\log 2 + \frac{m}{n}\log (P_\kappa)))\geq \exp(\delta n).
\end{align*}
This implies that
\begin{align*}
    \bP(Z(G)\geq 1)=\bP\left(\frac{Z(G)}{\E Z(G)}\geq \frac{1}{\E Z(G)}\right) \geq \bP\left(\frac{Z(G)}{\E Z(G)}\geq \exp(-\delta n)\right),
\end{align*}
which converges to $1$ as $n$ goes to infinity by Theorem \ref{t:lognormal}.

\subsection{Proof of Theorem \ref{t:freezing}}
Let $X$ be chosen uniformly at random from $\{\pm 1\}^n$. Consider the planted model $\bP^*$ when $X$ is planted. We firstly show that 
\begin{align*}
    \lim_{n\rightarrow \infty}\bP^*(\{X_2\in S(G):d(X,X_2)\leq \delta n)\}=\{X\})=1.
\end{align*}
Recall that we defined $\mathcal{P}_t$ to be the probability of $X_1,X_2\in S(G)$ for a given pair of vectors $X_1$ and $X_2$ where $\langle X_1,X_2 \rangle=t\sn$. Then we have
\begin{align*}
    \E^*(|\{X_2\in S(G):d(X_2,X)\leq \delta n\}|-1)=\sum_{\ell=1}^{\lfloor\delta n\rfloor} \binom{n}{\ell} \frac{\mathcal{P}_{(n-2\ell)/\sn}}{P_{\kappa,n}^m}.
\end{align*}
We start by showing that there exists a small $\delta_2>0$ and $c>0$ such that 
\begin{align*}
    \mathcal{P}_{(n-2\ell)/\sn}\leq \exp(-c\sqrt{\ell n})P_{\kappa,n}^m, 
\end{align*}
whenever $\ell\leq \delta_2 n$. Without loss of generality, assume that $X_1$ is the all one vector and $X_2$ is the vector with $\ell$ ones in the front and $-1$ in the rest of the entries. Let $X_0$ be chosen uniformly at random from $\{\pm 1\}^n$. Define $E_1$ to be the event that
$|\langle X_1,X_0 \rangle|\leq \kappa \sn$ and $E_2$ to be the event that
$|\langle X_2,X_0 \rangle|\leq \kappa \sn$.
For large $n$ and small $\delta_2$, if $\ell\leq \delta_2 n$, then there exists a small constant $c$ such that
\begin{align*}
    \bP(E_1)-\bP(E_1,E_2)\geq \bP\left(0\leq \kappa\sn-B_2\leq \sqrt{\ell},B_1\geq \sqrt{\ell}\right)\geq c\sqrt{\ell}/\sqrt{n},
\end{align*}
where $B_1$ and $B_2$ are independent, $B_1$ is the sum of $\ell$ independent Rademacher random variables and $B_2$ is the sum of $n-\ell$ independent Rademacher random variables. This implies that $\bP(E_1,E_2)\leq \exp(-c\sqrt{\ell}/\sqrt{n})P_{\kappa,n}$. Further we have for small constants $c_1$ and $c_2$,
\begin{align*}
    \mathcal{P}_{(n-2\ell)/\sn}\leq \exp(-c_1\sqrt{\ell}m/\sqrt{n})P_{\kappa,n}^m\leq \exp(-c_2\sqrt{\ell n})P_{\kappa,n}^m.
\end{align*}
Therefore,
\begin{align*}
    \sum_{\ell=1}^{\lfloor\delta n\rfloor} \binom{n}{\ell} \frac{\mathcal{P}_{(n-2\ell)/\sn}}{P_{\kappa,n}^m}\leq \sum_{\ell=1}^{\lfloor\delta n\rfloor} \binom{n}{\ell}\exp(-c_2\sqrt{\ell n})=o_n(1),
\end{align*}
when $\delta$ is small. By Theorem \ref{t:contiguity}, the statement holds.

\subsection{Properties of the special function}\label{ss:hypo}
\subsubsection{Proof of Lemma \ref{h:1}}
To prove this lemma, we firstly note that $F_{\kappa,\alpha}$ is symmetric around $1/2$, so it only suffices to focus on $x\in [0,1/2]$. Further, notice that $q_\kappa(0)=P_\kappa$ and $q_\kappa(1/2)=P_\kappa^2$. This implies that for any $x\in [0,1/2)$ and $0<\alpha<\alpha_c(\kappa)$,
\begin{align*}
    \left(F_{\kappa, \alpha} (\frac{1}{2}) - F_{\kappa, \alpha} (x) \right) - \left(F_{\kappa, \alpha_c} (\frac{1}{2}) - F_{\kappa, \alpha_c} (x) \right)= (\alpha-\alpha_c) \log \left( \frac{P_\kappa^2}{q_\kappa(x)}\right)>0.
\end{align*}
Therefore, in order to prove Lemma \ref{h:1}, it suffices to show that $F_{\kappa, \alpha_c}(x)\leq F_{\kappa, \alpha_c}(1/2)$ for any $x\in [0,1/2]$. For simplicity of notations, we sometimes write $F$ and $\alpha_c$ to denote $F_{\kappa, \alpha_c}$ and $\alpha_c(\kappa)$ respectively.

We prove this statement in a few steps. We start by showing that the inequality holds for any $\kappa>4$ (Lemma \ref{l:hypostep1}), followed by showing that the inequality holds for any $\kappa<0.1$ (Lemma \ref{l:hypostep2}). We then show that for $\kappa\in [0.1,4]$, when $x\in [0.3,0.5)$ (Lemma \ref{l:hypostep2.2}) and when $x\in [0,0.005]$ (Lemma \ref{l:hypostep2.3}), the inequality holds. Finally, we check that the inequality holds when $\kappa\in [0.1,4]$ and $x\in [0.005,0.3]$ (Lemma \ref{l:hypostep3}). The proofs of Lemma \ref{l:hypostep2.2} and \ref{l:hypostep3} require searches over certain grid points. We show in Section \ref{sss:specialfn} that all the computations involved are among rational numbers and thus can be evaluated exactly. The proofs of other lemmas are analytical.

\begin{lem}\label{l:hypostep1}
For any $\kappa>4$, $F_{\kappa, \alpha_c}(x)\leq F_{\kappa, \alpha_c}(1/2)$ for any $x\in [0,1/2]$. 
\end{lem}
\begin{proof}
We firstly note that 
\begin{align*}
    F(0)=\frac{-\log(2)}{ \log(P_\kappa)} \log (P_\kappa) = -\log(2) = \log(2) + \frac{-\log(2)}{ \log(P_\kappa)} \log (P_\kappa^2)= F(1/2). 
\end{align*}
$F$ is continuous on $[0,1]$ and smooth on $(0,1)$ with
\begin{align*}
    F'(x)=-\log(x)+\log(1-x)+\alpha_c \frac{-\exp(-\frac{\kappa^2}{2(1-x)})+\exp(-\frac{\kappa^2}{2x})}{\pi \sqrt{x(1-x)}q_\kappa(x)}.
\end{align*}
We will firstly show that when $x\in [0.2,0.5)$, $F'(x)<0$. Define a function
\begin{align*}
    \mathcal{L}(x)=\frac{\alpha_c}{\pi} \frac{\exp(-\frac{\kappa^2}{2(1-x)})-\exp(-\frac{\kappa^2}{2x})}{ \sqrt{x(1-x)}(\log(1-x)-\log(x))}.
\end{align*}
We analyze $\mathcal{L}(x)$ term by term. By Taylor expansion, it is easy to see that
\begin{equation}\label{eq:log1}
    \frac{\log(1-x)-\log(x)}{-(x-1/2)}\geq 4,
\end{equation}
for any $x\in [0.2,0.5]$. Moreover,
\begin{align*}
    \frac{d^2}{dx^2}\left(\exp\left(-\frac{\kappa^2}{2x}\right)\right)=\frac{\kappa^2 \exp(-\frac{\kappa^2}{2x})(\kappa^2-4x)}{4x^4}>0,
\end{align*}
when $\kappa>4$ and $x\leq 1/2$. This implies that
\begin{equation}\label{eq:exp1}
    \frac{\exp(-\frac{\kappa^2}{2(1-x)})-\exp(-\frac{\kappa^2}{2x})}{-(x-1/2)}\leq \frac{\kappa^2 \exp(-\frac{\kappa^2}{2(1-x)})}{(1-x)^2}.
\end{equation}
We also note that
\begin{align*}
    1-P_\kappa=1-\erf(\kappa/\sqrt{2})\geq \exp(-\kappa^2/2)\sqrt{\frac{2}{\pi}}(\frac{1}{\kappa}-\frac{1}{\kappa^3}).
\end{align*}
This implies that 
\begin{equation}\label{eq:alphac}
    \alpha_c=\frac{-\log(2)}{\log(P_\kappa)}\leq \frac{\log(2)}{1-P_\kappa}\leq \log(2)\sqrt{\frac{\pi}{2}}\exp(\kappa^2/2)\kappa \frac{16}{15},
\end{equation}
for $\kappa>4$. Combining equations \eqref{eq:log1}, \eqref{eq:exp1} and \eqref{eq:alphac}, we have for $\kappa>4$ and $x\in [0.35,0.5)$,
\begin{align*}
    \mathcal{L}(x)&\leq \frac{1}{4} \log(2)\sqrt{\frac{\pi}{2}}\exp(\kappa^2/2)\kappa \frac{16}{15} \frac{\kappa^2 \exp(-\frac{\kappa^2}{2(1-x)})}{(1-x)^2} \frac{1}{\sqrt{0.2 \cdot 0.8}}\\
    &\leq \exp\left(\frac{\kappa^2}{2}-\frac{\kappa^2}{2(1-x)}\right)\frac{\kappa^3}{(1-x)^2}0.19\\
    &\leq \exp\left(\frac{\kappa^2}{2}-\frac{\kappa^2}{1.3}\right)\frac{\kappa^3}{{0.5}^2}0.19
    \leq \exp\left(\frac{4^2}{2}-\frac{4^2}{1.3}\right)\frac{4^3}{{0.5}^2}0.19 < 0.9,
\end{align*}
where the second last inequality follows from the fact that $\exp(x^2/2-x^2/1.3)x^3$ is decreasing when $x>4$. When $x\in [0.2,0.35]$, we use a simple inequality
\begin{align*}
    \frac{\exp(-\frac{\kappa^2}{2(1-x)})-\exp(-\frac{\kappa^2}{2x})}{-(x-1/2)}\leq \frac{\exp(-\frac{\kappa^2}{2(1-x)})}{-(x-1/2)}.
\end{align*}
Combining this with equations \eqref{eq:log1} and \eqref{eq:alphac}, we have
\begin{align*}
    \mathcal{L}(x)&\leq \frac{1}{4} \log(2)\sqrt{\frac{\pi}{2}}\exp(\kappa^2/2)\kappa \frac{16}{15} \frac{\kappa^2 \exp(-\frac{\kappa^2}{2(1-x)})}{-(x-1/2)} \frac{1}{\sqrt{0.2 \cdot 0.8}}\\
    &\leq \exp\left(\frac{\kappa^2}{2}-\frac{\kappa^2}{2(1-x)}\right)\frac{\kappa}{-(x-1/2)}0.19\\
    &\leq \exp\left(\frac{\kappa^2}{2}-\frac{\kappa^2}{1.6}\right)\frac{\kappa}{0.15}0.19
    \leq \exp\left(\frac{4^2}{2}-\frac{4^2}{1.6}\right)\frac{4}{0.15}0.19<0.9,
\end{align*}
for any $x\in [0.2,0.35]$, when $\kappa>4$. The second last inequality follows from the fact that $\exp(x^2/2-x^2/1.6)x$ is decreasing when $x>4$. Moreover, we have that
\begin{align*}
    q_\kappa(x)\geq q_4(0.2)>0.9,
\end{align*}
for any $\kappa>4$ and $x\in[0.2,0.5)$. Therefore, we have for any $x\in [0.2,0.5)$ and $\kappa>4$,
\begin{align*}
    \mathcal{L}(x)=\frac{\alpha_c}{\pi} \frac{\exp(-\frac{\kappa^2}{2(1-x)})-\exp(-\frac{\kappa^2}{2x})}{ \sqrt{x(1-x)}(\log(1-x)-\log(x))}<q_\kappa(x),
\end{align*}
which is equivalent to 
\begin{align*}
    F'(x)>0.
\end{align*}
Next, we will show that $\mathcal{L}'(x)<q_\kappa'(x)$ for any $x\in (0,0.2]$. This implies that there exists some small constant $c_\kappa\in (0,0.2)$ such that $F'(x)<0$ when $x\in (0,c_\kappa)$ and $F'(x)>0$ when $x\in (c_\kappa,0.2)$. (This is because both $\mathcal{L}(x)$ and $q_\kappa(x)$ are positive and $\mathcal{L}(x)$ goes to infinity as $x$ approaches $0$.) In particular, together with above, this shows that $F(x)<F(0)=F(0.5)$ for $x\in (0,0.5)$. Now, we prove this statement. Note that
\begin{align*}
    -\mathcal{L}'(x)&=\frac{\alpha_c}{\pi} \left( \frac{\frac{\kappa^2 e^{-\frac{\kappa^2}{2x}}}{2x^2}   +\frac{\kappa^2 e^{-\frac{\kappa^2}{2(1-x)}}}{2(1-x)^2}  }{\sqrt{x(1-x)}(\log(1-x)-\log(x))}  -  \frac{ (1-2x)( e^{-\frac{\kappa^2}{2x}} - e^{-\frac{\kappa^2}{2(1-x)}})}{2(x(1-x))^{3/2}(\log(1-x)-\log(x))} \right.\\
    & \left.\qquad\qquad -\frac{ (-\frac{1}{x}-\frac{1}{1-x})( e^{-\frac{\kappa^2}{2x}} - e^{-\frac{\kappa^2}{2(1-x)}})}{\sqrt{x(1-x)}(\log(1-x)-\log(x))^2} \right).
\end{align*}
When $\kappa>4$ and $x<0.2$, $\exp(-\kappa^2/(2x))<0.01 \exp(-\kappa^2/(2(1-x)))$. This implies that
\begin{align*}
    -\mathcal{L}'(x)&\geq 0.99\frac{\alpha_c}{\pi} e^{-\frac{\kappa^2}{2(1-x)}} \left( \frac{\frac{\kappa^2 }{2(1-x)^2}  }{\sqrt{x(1-x)}(\log(1-x)-\log(x))}  +  \frac{ (1-2x)}{2(x(1-x))^{3/2}(\log(1-x)-\log(x))} \right.\\
    & \left.\qquad\qquad +\frac{ (-\frac{1}{x}-\frac{1}{1-x})}{\sqrt{x(1-x)}(\log(1-x)-\log(x))^2} \right)\\
    &= \frac{0.99\frac{\alpha_c}{\pi}e^{-\frac{\kappa^2}{2(1-x)}}}{\sqrt{x(1-x)}(\log(1-x)-\log(x))} \left( \frac{\kappa^2 }{2(1-x)^2}  +\frac{1-2x}{2x(1-x)}-\frac{1}{x(1-x)(\log(1-x)-\log(x))}\right).
\end{align*}
At the same time,
\begin{align*}
    -q_\kappa'(x)=\frac{\exp(-\frac{\kappa^2}{2(1-x)})-\exp(-\frac{\kappa^2}{2x})}{\pi \sqrt{x(1-x)}}\leq \frac{\exp(-\frac{\kappa^2}{2(1-x)})}{\pi \sqrt{x(1-x)}}.
\end{align*}
To show $\mathcal{L}'(x)<q_\kappa'(x)$, it suffices to show for $x \in (0,0.2]$ and $\kappa>4$,
\begin{align*}
    &\frac{\kappa^2 }{2(1-x)^2(\log(1-x)-\log(x))}  +\frac{1-2x}{2x(1-x)(\log(1-x)-\log(x))}-\frac{1}{x(1-x)(\log(1-x)-\log(x))^2}\\
    &\geq \frac{1}{0.99\alpha_c}.
\end{align*}
Further, as the left hand side is increasing in $\kappa$ while the right hand side is decreasing in $\kappa$, we only need to show for $x \in (0,0.2]$
\begin{align*}
    &\frac{8 }{(1-x)^2(\log(1-x)-\log(x))}  +\frac{1-2x}{2x(1-x)(\log(1-x)-\log(x))}-\frac{1}{x(1-x)(\log(1-x)-\log(x))^2}\\
    &\geq \frac{1}{10000}.
\end{align*}
Name the three fractions $A_1$, $A_2$ and $A_3$ respectively. When $x<0.05$, $A_2>1.2 A_3$ because $(1-2x)(\log(1-x)-\log(x))/2>1.2$ when $x<0.2$. Therefore,
\begin{align*}
    A_1+A_2+A_3\geq 0.1 \frac{1-2x}{2x(1-x)(\log(1-x)-\log(x))} \geq 0.1 \frac{0.6}{2\cdot 0.2 \cdot 2} >0.01.
\end{align*}
When $x\geq 0.05$,
\begin{align*}
    A_1\geq \frac{8}{0.95^2(\log(0.95)-\log(0.05))} >3.
\end{align*}
And
\begin{align*}
    A_2+A_3=\frac{\frac{1-2x}{2}(\log(1-x)-\log(x))-1}{x(1-x)(\log(1-x)-\log(x))^2} \geq \frac{\frac{1-0.4}{2}(\log(0.8)-\log(0.2))-1}{0.2(1-0.2)(\log(1-0.2)-\log(0.2))^2} >-2.
\end{align*}
To see the second last inequality, if the numerator is non-negative, then the inequality holds trivially. Else, the numerator is decreasing in $x$ and the denominator is concave in $[0.05,0.2]$ where its minimum is achieved at $x=0.2$ (concavity is easily seen by taking a third derivative of the denominator). 
Therefore, $A_1+A_2+A_3>1$ for $x\in [0.05,0.2]$. Combining all the arguments above, we have the lemma. 
\end{proof}

\begin{lem}\label{l:hypostep2}
For any $0<\kappa<0.1$, $F_{\kappa, \alpha_c}(x)\leq F_{\kappa, \alpha_c}(1/2)$ for any $x\in [0,1/2]$. 
\end{lem}
\begin{proof}
We firstly note that
\begin{align*}
    q_\kappa(x)&=\frac{1}{2\pi} \int_{-\kappa}^{\kappa} \int_{\frac{-\kappa+(1-2x)y}{2\sqrt{x(1-x)}}} ^{\frac{\kappa+(1-2x)y}{2\sqrt{x(1-x)}}} \exp\left(-\frac{y^2+z^2}{2}\right) dz dy\\
    &\leq \frac{1}{2\pi} 2\kappa \frac{2\kappa}{2 \sqrt{x(1-x)}} = \frac{\kappa^2}{\pi \sqrt{x(1-x)}}.
\end{align*}
We also note that
\begin{align}
    q_\kappa(x)&=\frac{1}{2\pi} \int_{-\kappa}^{\kappa} \int_{\frac{-\kappa+(1-2x)y}{2\sqrt{x(1-x)}}} ^{\frac{\kappa+(1-2x)y}{2\sqrt{x(1-x)}}} \exp\left(-\frac{y^2+z^2}{2}\right) dz dy\\
    &\geq \frac{1}{2\pi} 2\kappa \frac{2\kappa}{2 \sqrt{x(1-x)}} \exp(-\kappa^2/2x)=\frac{\kappa^2\exp(-\kappa^2/2x)}{\pi \sqrt{x(1-x)}}. \label{eq:boundonq}
\end{align}
Besides, for $\kappa<0.1$, 
\begin{align*}
    P_\kappa = \frac{1}{\sqrt{2\pi}} \int_{-\kappa}^\kappa \exp(-x^2/2) dx \leq \frac{2\kappa}{\sqrt{2\pi}},
\end{align*}
and 
\begin{align*}
    P_\kappa = \frac{1}{\sqrt{2\pi}} \int_{-\kappa}^\kappa \exp(-x^2/2) dx \geq 0.99\frac{2\kappa}{\sqrt{2\pi}}.
\end{align*}
Our proof is divided into several steps. We will show that when $x\in [0.27,0.5)$, $\mathcal{L}(x)<q_\kappa(x)$; when $x\in (0, \kappa^2/2]$, $\mathcal{L}(x)>q_\kappa(x)$; when $x\in (\kappa^2/2,\kappa/12]$, $\mathcal{L}(x)>q_\kappa(x)$; when $x \in [0.14,0.27)$, $F_\kappa(x)<F_\kappa(1/2)$; when $x \in [0.04,0.14)$, $F_\kappa(x)<F_\kappa(1/2)$; when $x \in [\kappa/12,0.04)$, $F_\kappa(x)<F_\kappa(1/2)$. These results combined will give us the lemma. 

We will firstly show that for any $x\in [0.27,0.5)$, $\mathcal{L}(x)<q_\kappa(x)$. By Taylor expansion, it is easy to see that
\begin{align*}
    \frac{\log(1-x)-\log(x)}{-(x-1/2)}\geq 4,
\end{align*}
for any $x\in [0.27,0.5)$. Moreover,
\begin{align*}
    \frac{d^2}{dx^2}\left(\exp\left(-\frac{\kappa^2}{2x}\right)\right)=\frac{\kappa^2 \exp(-\frac{\kappa^2}{2x})(\kappa^2-4x)}{4x^4}<0,
\end{align*}
when $\kappa<0.01$ and $x\geq 0.27$. This implies that
\begin{align*}
    \frac{\exp(-\frac{\kappa^2}{2(1-x)})-\exp(-\frac{\kappa^2}{2x})}{-(x-1/2)}\leq \frac{\kappa^2 \exp(-\frac{\kappa^2}{2x})}{x^2}.
\end{align*}
Therefore, by equation \eqref{eq:boundonq},
\begin{align*}
    \mathcal{L}(x)\leq \frac{\alpha_c}{\pi} \frac{1}{\sqrt{x(1-x)}} \frac{\kappa^2}{x^2}\exp(-\frac{\kappa^2}{2x}) \frac{1}{4} \leq \frac{\alpha_c}{4x^2}q_{\kappa}(x).
\end{align*}
When $\kappa<0.1$ and $x\geq 0.27$, we have $\alpha_c\leq 0.274$ and thus $\alpha_c/4x^2<1$. This implies that $\mathcal{L}(x)<q_\kappa(x)$. Next we show that when $x\in (0, \kappa^2/2]$, $\mathcal{L}(x)>q_\kappa(x)$. Note that
\begin{align*}
    -\log(P_\kappa) \mathcal{L}(x)&=\frac{\log(2)}{\pi} \frac{\exp(-\frac{\kappa^2}{2(1-x)})-\exp(-\frac{\kappa^2}{2x})}{ \sqrt{x(1-x)}(\log(1-x)-\log(x))} \\
    &\geq \frac{\log(2)}{\pi} \frac{1/2}{\sqrt{x}(-\log(x))}\geq \frac{\log(2)}{\pi} \frac{1/2}{\sqrt{0.005}(-\log(0.005))}>0.29,
\end{align*}
as $-\sqrt{x}\log(x)$ is increasing when $x\leq 0.005$. At the same time
\begin{align*}
    -\log(P_\kappa) q_\kappa(x) \leq -\log(P_\kappa) P_\kappa \leq -\log(P_{0.1}) P_{0.1}< 0.21,
\end{align*}
as $-x\log(x)$ is increasing when $x<0.08$. These combined gives $\mathcal{L}(x)>q_\kappa(x)$ when $x\in (0, \kappa^2/2]$. For the next step, we show that $\mathcal{L}(x)>q_\kappa(x)$ when $x\in ( \kappa^2/2, \kappa/12]$. Note that by Taylor expansion, when $x\in ( \kappa^2/2, \kappa/12]$,
\begin{align*}
    \exp(-\frac{\kappa^2}{2(1-x)})-\exp(-\frac{\kappa^2}{2x})\geq -\frac{\kappa^2}{2(1-x)}+\frac{\kappa^2}{2x}-\frac{\kappa^4}{8x^2} > \frac{\kappa^2}{5x}.
\end{align*}
This implies that
\begin{align*}
    \mathcal{L}(x)&\geq \frac{\log(2)}{\pi} \frac{\kappa^2/5x}{\sqrt{x(1-x)}(-\log(x))}\frac{1}{-\log(P_\kappa)}\\
    &\geq q_\kappa(x)\frac{\log(2)}{5x(-\log(x))(-\log(P_\kappa))}\\
    &\geq q_\kappa(x)\frac{\log(2)}{5\kappa/12(-\log(\kappa/12))(-\log(0.99\frac{2\kappa}{\sqrt{2\pi}}))}\\
    &\geq q_\kappa(x)\frac{\log(2)}{5\cdot 0.1/12(-\log(0.1/12))(-\log(0.99\cdot 0.2/\sqrt{2\pi}))}>q_\kappa(x).
\end{align*}
where we have used the fact that $x(-\log(x))$ is increasing on $(0,0.01]$ and $x (-\log(\frac{x}{12})) (-\log(\frac{1.98x}{\sqrt{2\pi}}))$ is increasing on $(0,0.1]$. Next, we show that if $x\in [0.14,0.27]$, then $F_\kappa(x)< F_{\kappa}(1/2)$. By definition, the inequality is equivalent to 
\begin{align*}
    -x\log(x)-(1-x)\log(1-x)+\alpha_c \log(q_\kappa(x))< \log(2)+\alpha_c(P_\kappa^2)
\end{align*}
which is further equivalent to 
\begin{align*}
    \log\left(\frac{q_\kappa(x)}{P_\kappa^2}\right)< \frac{1}{\alpha_c }( \log(2)+x\log(x)+(1-x)\log(1-x)).
\end{align*}
When $x\in [0.14,0.27]$ and $\kappa<0.1$, the right hand
\begin{align*}
    &\frac{1}{\alpha_c }( \log(2)+x\log(x)+(1-x)\log(1-x)) \\
    &\geq \frac{1}{\alpha_c(0.1) }( \log(2)+0.27\log(0.27)+(1-0.27)\log(1-0.27))>0.4.
\end{align*}
And the left hand side
\begin{align*}
    \log\left(\frac{q_\kappa(x)}{P_\kappa^2}\right)<\log\left(\frac{1}{\sqrt{x(1-x)}0.99^2 \cdot 2}\right) \leq \log\left(\frac{1}{\sqrt{0.14(1-0.14)}0.99^2 \cdot 2}\right)<0.4.
\end{align*}
Therefore, when $x\in [0.14,0.27]$, we have $F_\kappa(x)< F_{\kappa}(1/2)$. In the next step, we repeat the above argument and show that when $x\in [0.04,0.14]$, $F_\kappa(x)< F_{\kappa}(1/2)$. We consider the same argument and have that
\begin{align*}
    &\frac{1}{\alpha_c(\kappa) }( \log(2)+x\log(x)+(1-x)\log(1-x)) \\
    &\geq \frac{1}{\alpha_c(0.1) }( \log(2)+0.14\log(0.14)+(1-0.14)\log(1-0.14))>1.05.
\end{align*}
At the same time,
\begin{align*}
    \log\left(\frac{q_\kappa(x)}{P_\kappa^2}\right)<\log\left(\frac{1}{\sqrt{x(1-x)}0.99^2 \cdot 2}\right) \leq \log\left(\frac{1}{\sqrt{0.04(1-0.04)}0.99^2 \cdot 2}\right)<1.05.
\end{align*}
This implies that when $x\in [0.04,0.14]$, $F_\kappa(x)< F_{\kappa}(1/2)$. In the last step, we consider $x\in [\kappa/12,0.04]$. Consider the same equivalence and we have that 
\begin{align*}
    &\frac{1}{\alpha_c(\kappa) }( \log(2)+x\log(x)+(1-x)\log(1-x)) \\
    &\geq \frac{-\log(P_\kappa)}{\log(2) }( \log(2)+0.04\log(0.04)+(1-0.04)\log(1-0.04))\\
    &> -0.75 \log\left(\frac{2\kappa}{\sqrt{2\pi}}\right)=\log\left(\left(\frac{\sqrt{2\pi}}{2}\right)^{0.75} \frac{1}{\kappa^{0.75}}\right)>\log\left(\frac{2}{\sqrt{\kappa}}\right)
\end{align*}
At the same time
\begin{align*}
    \log\left(\frac{q_\kappa(x)}{P_\kappa^2}\right)<\log\left(\frac{1}{0.99^2 \cdot 2 \sqrt{x}}\right) \leq \log\left(\frac{1}{0.99^2 \cdot 2 \sqrt{\kappa/12}}\right)<\log\left(\frac{2}{\sqrt{\kappa}}\right).
\end{align*}
Therefore, when $x\in [\kappa/12,0.04]$, we have $F_\kappa(x)< F_{\kappa}(1/2)$. Combining all the steps, we have the lemma.
\end{proof}

\begin{lem}\label{l:hypoboundonk}
Define a function
\begin{align*}
    \mathcal{L}_1(\kappa):=\frac{\pi P_\kappa^2}{\alpha_c(\kappa)2 \kappa^2 \exp(-\kappa^2)}.
\end{align*}
Then,
\begin{align*}
    \mathcal{L}_1(\kappa) \geq 1.01 , \text{ if } \kappa \in [0.41,2.2].
\end{align*}
Moreover, we have a more detailed description:
\begin{align*}
    &\mathcal{L}_1(\kappa) \geq 1.02 , \text{ if } \kappa \in [1.18,1.23]. \quad &&\mathcal{L}_1(\kappa) \geq 1.031 , \text{ if } \kappa \in [1.13,1.18].\\
    &\mathcal{L}_1(\kappa) \geq 1.041 , \text{ if } \kappa \in [1.08,1.13]. \quad
    &&\mathcal{L}_1(\kappa) \geq 1.06 , \text{ if } \kappa \in [1,1.08].\\
    &\mathcal{L}_1(\kappa) \geq 1.09 , \text{ if } \kappa \in [0.86,1]. \quad &&\mathcal{L}_1(\kappa) \geq 1.17 , \text{ if } \kappa \in [0.6,0.86].\\
    &\mathcal{L}_1(\kappa) \geq 1.45 , \text{ if } \kappa \in [0.41,0.6]\cup [2,2.2].
\end{align*}
\end{lem}

With this lemma, we are able to establish the following lemma.
\begin{lem}\label{l:hypostep2.2}
For any $\kappa\in [0.1,4]$, $F_{\kappa, \alpha_c}(x)< F_{\kappa, \alpha_c}(1/2)$ for any $x\in [0.3,1/2]$.
\end{lem}
\begin{proof}[Proof of Lemma \ref{l:hypostep2.2}]
Recall that if we can show
\begin{align*}
    \mathcal{L}(x)=\frac{\alpha_c}{\pi} \frac{\exp(-\frac{\kappa^2}{2(1-x)})-\exp(-\frac{\kappa^2}{2x})}{ \sqrt{x(1-x)}(\log(1-x)-\log(x))}<q_\kappa(x),
\end{align*}
for any $x\in [0.3,0.5)$, then the derivative of $F(x)$ is positive and the lemma follows. Define
\begin{align*}
    \mathcal{E}_\kappa(x)=\frac{\exp(-\frac{\kappa^2}{2(1-x)})-\exp(-\frac{\kappa^2}{2x})}{(\frac{1}{2}-x)4 \exp(-\kappa^2)\kappa^2}.
\end{align*}
Then by Taylor expansion and simple analysis, we have for any $x\in [0.3,0.5]$,
\begin{align*}
    &\mathcal{E}_\kappa(x) \leq 1-0.5(x-0.5)^2 , \text{ if } \kappa \in [1.23,2]. \  &&\mathcal{E}_\kappa(x) \leq 1-0.25(x-0.5)^2 , \text{ if } \kappa \in [1.18,1.23].\\
    &\mathcal{E}_\kappa(x) \leq 1 , \text{ if } \kappa \in [1.13,1.18]. \ 
    &&\mathcal{E}_\kappa(x) \leq 1+0.25(x-0.5)^2 , \text{ if } \kappa \in [1.08,1.13].\\
    &\mathcal{E}_\kappa(x) \leq 1+0.7(x-0.5)^2 , \text{ if } \kappa \in [1,1.08]. \  &&\mathcal{E}_\kappa(x) \leq 1+1.45(x-0.5)^2 , \text{ if } \kappa \in [0.86,1].\\
    &\mathcal{E}_\kappa(x) \leq 1+3.35(x-0.5)^2 , \text{ if } \kappa \in [0.6,0.86]. \  && \mathcal{E}_\kappa(x) \leq 1+5(x-0.5)^2 , \text{ if } \kappa \in [0.41,0.6]\cup[2,2.2].
\end{align*}
Note that by Lemma \ref{l:hypoboundonk}, we have explicit bounds on $\mathcal{L}_1(\kappa)=\mathcal{L}(0.5)/q_\kappa(0.5)$. Together with the fact that the function
\begin{align*}
    \frac{(1+c(x-0.5)^2)(0.5-x)}{\sqrt{x(1-x)}(\log(1-x)-\log(x))}
\end{align*}
is decreasing from $[0.3,0.5]$, when $c\geq-0.5$, we have the lemma for $\kappa \in [0.41,2.2]$. It remains to check the case when $\kappa \in [0.1,0.41]$ and $\kappa \in [2.2,4]$. We will check that for any $\kappa\in [2.2,4]$ and $x\in [0.3,0.5]$,
\begin{align*}
    \mathcal{L}(x)=\frac{\alpha_c}{\pi} \frac{\exp(-\frac{\kappa^2}{2(1-x)})-\exp(-\frac{\kappa^2}{2x})}{ \sqrt{x(1-x)}(\log(1-x)-\log(x))}<P_\kappa^2.
\end{align*}
In this case, $\mathcal{L}(x)$ is decreasing for $x\in [0.3,0.5]$, therefore, we only need to check 
\begin{align*}
    \mathcal{L}(0.3)=\frac{\alpha_c}{\pi} \frac{\exp(-\frac{\kappa^2}{2(1-0.3)})-\exp(-\frac{\kappa^2}{2\cdot 0.3})}{ \sqrt{0.3(1-0.3)}(\log(1-0.3)-\log(0.3))}<P_\kappa^2.
\end{align*}
Note that we have
\begin{align*}
    \mathcal{L}(0.3)(-\log(P_\kappa))<\frac{\log(2)}{\pi} \frac{\exp(-\frac{\kappa^2}{1.4})}{0.388}<0.569 \exp(-\frac{\kappa^2}{1.4}).
\end{align*}
At the same time, as 
\begin{align*}
    P_x=\erf\left(\frac{x}{\sqrt{2}}\right)\leq 1+e^{-x^2/2}\sqrt{\frac{2}{\pi}}\left(-\frac{1}{x}+\frac{1}{x^3}\right),
\end{align*}
$-x^2\log(x)\geq -(x-1)-3/2(x-1)^2$ when $x<1$ and $x-3/2x^2$ is increasing when $x<0.2$, we have that for $\kappa \geq 2.2$,
\begin{align*}
    (-\log(P_\kappa))P_\kappa^2\geq -(P_\kappa-1)-3/2(P_\kappa-1)^2 \geq e^{-\kappa^2/2}\sqrt{\frac{2}{\pi}}\left(\frac{1}{\kappa}-\frac{1}{\kappa^3}\right)-\frac{3}{2}e^{-\kappa^2}\frac{2}{\pi}\left(\frac{1}{\kappa}-\frac{1}{\kappa^3}\right)^2.
\end{align*}
Then, as the following is increasing in $\kappa$, we have
\begin{align*}
    &\exp\left(\frac{\kappa^2}{1.4}\right)\left( e^{-\kappa^2/2}\sqrt{\frac{2}{\pi}}\left(\frac{1}{\kappa}-\frac{1}{\kappa^3}\right)-\frac{3}{2}e^{-\kappa^2}\frac{2}{\pi}\left(\frac{1}{\kappa}-\frac{1}{\kappa^3}\right)^2 \right)\\
    & \geq \exp\left(\frac{2^2}{1.4}\right)\left( e^{-2^2/2}\sqrt{\frac{2}{\pi}}\left(\frac{1}{2}-\frac{1}{2^3}\right)-\frac{3}{2}e^{-2^2}\frac{2}{\pi}\left(\frac{1}{2}-\frac{1}{2^3}\right)^2 \right)>0.6.
\end{align*}
This shows that for any $x\in [0.3,0.5]$ and $\kappa \in [2.2,4]$, $\mathcal{L}(x)<q_\kappa(x)$.
Now we deal with the case when $\kappa$ is small. Note that for any $\kappa<0.41$, we have that $\mathcal{L}(x)$ is decreasing for $x\in [0.3,0.5]$. Therefore, 
\begin{align*}
    \mathcal{L}(x)\leq \mathcal{L}(0.3)=\frac{\alpha_c}{\pi} \frac{\exp(-\frac{\kappa^2}{2(1-0.3)})-\exp(-\frac{\kappa^2}{2\cdot 0.3})}{ \sqrt{0.3(1-0.3)}(\log(1-0.3)-\log(0.3))}<\frac{\alpha_c}{\pi} \frac{-\frac{\kappa^2}{2(1-0.3)}+\frac{\kappa^2}{2\cdot 0.3}}{0.388}.
\end{align*}
Note that when $\kappa\leq 0.41$,
\begin{align*}
    \erf(\frac{\kappa}{\sqrt{2}})=\frac{1}{\sqrt{2\pi}} \int_{-\kappa}^\kappa \exp(-x^2/2) dx \geq 0.92\frac{2\kappa}{\sqrt{2\pi}}.
\end{align*}
Further, as $-x^2\log(x)$ is increasing when $x< 0.5$, we have that for any $\kappa\in [0.1,0.41]$,
\begin{align*}
    -\log(P_\kappa)P_\kappa^2\kappa^{-2}\geq -\frac{0.92^2\cdot 2}{\pi} \log(\frac{0.92\cdot 2\kappa}{\sqrt{2\pi }})&\geq -\frac{0.92^2\cdot 2}{\pi} \log(\frac{0.92\cdot 2\cdot 0.4}{\sqrt{2\pi }}) \geq 0.66\\
    &>0.55> \frac{\log(2)}{\pi} \frac{-\frac{1}{2(1-0.3)}+\frac{1}{2\cdot 0.3}}{0.388}.
\end{align*}
Therefore, the lemma holds for $\kappa\in [0.1,0.41]$.
\end{proof}

\begin{proof}[Proof of Lemma \ref{l:hypoboundonk}]
Note that
\begin{align*}
    \mathcal{L}_1(\kappa)=\frac{\pi P_\kappa^2}{\alpha_c(\kappa)2 \kappa^2 \exp(-\kappa^2)}= -\frac{\pi}{2\log(2)}\frac{P_\kappa^2 \log(P_\kappa)}{\exp(-\kappa^2)\kappa^{2}}.
\end{align*}
As we have constructed explicit bounds of $\erf(x/\sqrt{2})$, $\exp(-x^2/2)$ and $\log(x)$, see Section $\ref{sss:specialfn}$ for detailed definitions, we have
\begin{align*}
    \mathcal{L}_1(\kappa)\geq -2.26618 \frac{h_l^2(\kappa)\mathcal{G}_u(h_u(\kappa))}{g_u^2(\kappa) \kappa^2}.
\end{align*}
We check that for any $\kappa \in [[0.41,2,0.002]]$, the right hand side satisfies the inequalities required by the lemma by at least $0.002$. Here we use $[[a,b,\delta]]$ to denote the grid points that starts at $a$, ends at $b$ with step size $\delta$.

To see that this implies the lemma, we compute the second derivative of $\mathcal{L}_1(x)$.
\begin{align*}
    \mathcal{L}_1''(x)&=-\frac{2 \left(2 \log
   \left(\erf\left(\frac{x}{\sqrt{2}}\right)\right)+3\right)}{x^2 \log (4)}-\frac{\sqrt{2 \pi }
   e^{\frac{x^2}{2}} \left(3 x^2-4\right)  \erf\left(\frac{x}{\sqrt{2}}\right)
   \left(2 \log \left(\erf\left(\frac{x}{\sqrt{2}}\right)\right)+1\right)}{x^3 \log (4)}\\
   &\quad -\frac{2 \pi 
   e^{x^2} \left(2 x^4-3 x^2+3\right) \erf\left(\frac{x}{\sqrt{2}}\right)^2 \log
   \left(\erf\left(\frac{x}{\sqrt{2}}\right)\right)}{x^4 \log (4)}\\
   &:=A_1+A_2+A_3.
\end{align*}
We consider all $x\in [0.41,2.2]$. Note that $A_1$ is always negative when $x\in [0.41, 2.2]$. $A_2$ is positive only when $3x^2-4< 0$ and $2\log(\erf(x/\sqrt{2}))+1>0$. In this case, $2\log(\erf(x/\sqrt{2}))+1<0.5$. Therefore, we have that
\begin{align*}
    A_2\leq -\frac{\sqrt{2 \pi }
   e^{\frac{0.41^2}{2}} \left(3 \cdot 0.41^2-4\right)}{2\cdot 0.41^3 \log (4)}\leq 50.
\end{align*}
For $A_3$, notice that $\exp(x^2)(2x^4-3x^2+3)/x^4$ is convex and thus it achieves maximum at $x=2.2$. Moreover, $-\erf(\frac{x}{\sqrt{2}})^2 \log(\erf(\frac{x}{\sqrt{2}})$ is bounded by $e^{-1/2}$. This implies that 
\begin{align*}
    A_3\leq \frac{2.2 \pi 
   e^{2.2^2} \left(2 \cdot 2.2^4-3 \cdot 2.2^2+3\right) e^{-1/2}}{2.2^4 \log (4)}\leq 550.
\end{align*}
Combining the computations above, we have that
\begin{align*}
    \mathcal{L}_1''(x)\leq 600. 
\end{align*}
Therefore, the lemma is established.
\end{proof}

\begin{lem}\label{l:hypostep2.3}
For any $\kappa\in [0.1,4]$, $F_{\kappa, \alpha_c}(x)< F_{\kappa, \alpha_c}(1/2)$ for any $x\in [0,0.005]$.
\end{lem}
\begin{proof}[Proof of Lemma \ref{l:hypostep2.3}]
We will check that for any $x\in [0,0.005]$,
\begin{align*}
    \mathcal{L}(x)=\frac{\alpha_c}{\pi} \frac{\exp(-\frac{\kappa^2}{2(1-x)})-\exp(-\frac{\kappa^2}{2x})}{ \sqrt{x(1-x)}(\log(1-x)-\log(x))}>P_\kappa.
\end{align*}
Note that $\mathcal{L}(x)$ is decreasing for $x\in [0,0.005]$. Therefore, we only need to show the inequality for $x=0.005$. This is equivalent to 
\begin{align*}
    \frac{\exp(-\frac{\kappa^2}{2(1-0.005)})-\exp(-\frac{\kappa^2}{2\cdot 0.005})}{-\log(P_\kappa)P_\kappa}>\frac{\pi}{ \log(2)}\sqrt{0.995\cdot 0.005}(\log(0.995)-\log(0.005)).
\end{align*}
We use $L_0(\kappa)$ to denote the left hand side. The right hand side is smaller than $1.7$. So it only remains to show that $L_0(\kappa)>1.7$. As $-x\log(x)$ is maximized at $1/e$ with max value $1/e$, for $\kappa\in [0.1,0.95]$, 
\begin{align*}
    L_0(\kappa)\geq e (\exp(-\kappa^2/(2(1-0.005)))-\exp(-\kappa^2/(2\cdot 0.005)))> 1.7.
\end{align*}
Next, note that
\begin{align*}
    1-\erf\left(\frac{\kappa}{\sqrt{2}}\right)\leq \exp\left(-\frac{\kappa^2}{2}\right)\sqrt{\frac{2}{\pi}}\frac{1}{\kappa}.
\end{align*}
As $-x\log(x)\leq 1-x$, we have that when $\kappa \in [1.45,4]$, 
\begin{align*}
    L_0(\kappa)& \geq (\exp(-\kappa^2/(2(1-0.005)))-\exp(-\kappa^2/(2\cdot 0.005)))\exp\left(\frac{\kappa^2}{2}\right)\sqrt{\frac{\pi}{2}}\kappa \\
    &\geq (\exp\left(\frac{-0.005\kappa^2}{2\cdot 0.995}\right)-\exp(-99.5\kappa^2))\sqrt{\frac{\pi}{2}}\kappa >1.7.
\end{align*}
Further, when $\kappa \in (0.95,1.2)$, 
\begin{align*}
    L_0(\kappa) \geq\frac{\exp(-1.2^2/(2(1-0.005)))-\exp(-0.95^2/(2\cdot 0.005))}{-\log(P_{0.95})P_{0.95}}>1.7.
\end{align*}
Similarly, when $\kappa \in (1.2,1.45)$,
\begin{align*}
    L_0(\kappa) \geq\frac{\exp(-1.45^2/(2(1-0.005)))-\exp(-1.2^2/(2\cdot 0.005))}{-\log(P_{1.2})P_{1.2}}>1.7.
\end{align*}
Therefore, we have the lemma.
\end{proof}

\begin{lem}\label{l:hypostep3}
For any $\kappa\in [0.1,4]$, $F_{\kappa, \alpha_c}(x)< F_{\kappa, \alpha_c}(1/2)$ for any $x\in [0.005,0.3]$.
\end{lem}
\begin{proof}
We will show that for any $\kappa \in [0.1,4]$ and $x\in [0.005,0.3]$,
\begin{align*}
    F_{\kappa,\alpha_c}(x)-F_{\kappa,\alpha_c}(1/2)=\alpha_c(\kappa) \log q_\kappa(x)-x \log x -(1-x) \log (1-x) - \alpha_c(\kappa) \log P_\kappa <0.
\end{align*}
We check that on all grid points $[[0.005,0.3,0.0005]]_x \times [[0.1,4,0.0005]]_\kappa$,
\begin{align*}
    \frac{\log(2)}{-\mathcal{G}_u(g_u(\kappa))} \mathcal{G}_u( q_{\kappa,u}(x))-x \mathcal{G}_l(x) -(1-x) \mathcal{G}_l(1-x) -0.69314718 <-0.006.
\end{align*}
This implies that 
\begin{align*}
    F_{\kappa,\alpha_c}(x)-F_{\kappa,\alpha_c}(1/2)<-0.006,
\end{align*}
on any grid points $[[0.005,0.3,0.0005]]_x \times [[0.1,4,0.0005]]_\kappa$. Together with the following bounds on the derivative, we have the lemma. Note that
\begin{align*}
    \frac{\partial}{\partial x}(F_{\kappa,\alpha_c}(x)-F_{\kappa,\alpha_c}(1/2)) &= -\log(x)+\log(1-x)+\alpha_c(\kappa) \frac{-\exp(-\frac{\kappa^2}{2(1-x)})+\exp(-\frac{\kappa^2}{2x})}{\pi \sqrt{x(1-x)}q_\kappa(x)}\\
    &\leq -\log(0.005)+\log(1-0.005)<6.
\end{align*}
And
\begin{align*}
    \frac{\partial}{\partial \kappa}(F_{\kappa,\alpha_c}(x)-F_{\kappa,\alpha_c}(1/2))&=\frac{\partial}{\partial \kappa}\left(\frac{\log(2)\log(q_\kappa(x))}{-\log(P_\kappa)}\right)\geq \frac{\log(2)\log(q_\kappa(x))\exp(-\frac{\kappa^2}{2})\sqrt{\frac{2}{\pi}}}{P_\kappa \log(P_\kappa)^2}\\
    &\geq \frac{2\log(2)\exp(-\frac{\kappa^2}{2})\sqrt{\frac{2}{\pi}}}{P_\kappa \log(P_\kappa)}.
\end{align*}
Note that for any $\kappa\in [0.1,1]$, $-P_\kappa \log(P_\kappa)>0.2 $. This implies that for $\kappa\in [0.1,1]$,
\begin{align*}
    \frac{\partial}{\partial \kappa}(F_{\kappa,\alpha_c}(x)-F_{\kappa,\alpha_c}(1/2))\geq \frac{-2\log(2)\sqrt{\frac{2}{\pi}}}{0.2}>-6.
\end{align*}
For $\kappa\in [1,1.5]$, as $-P_\kappa \log(P_\kappa)$ is decreasing,
\begin{align*}
    \frac{\partial}{\partial \kappa}(F_{\kappa,\alpha_c}(x)-F_{\kappa,\alpha_c}(1/2))\geq \frac{2\log(2)\exp(-\frac{1^2}{2})\sqrt{\frac{2}{\pi}}}{P_{1.5} \log(P_{1.5})} > -6.
\end{align*}
For $\kappa\in [1.5,4]$, note that as 
\begin{align*}
    P_\kappa=\erf\left(\frac{\kappa}{\sqrt{2}}\right)\leq 1+\exp\left(-\frac{\kappa^2}{2}\right)\sqrt{\frac{2}{\pi}}\left(-\frac{1}{\kappa}+\frac{1}{\kappa^3}\right),
\end{align*}
and $-x\log(x)\geq (1-x)-(1-x)^2/2$, we have that
\begin{align*}
    \frac{\partial}{\partial \kappa}(F_{\kappa,\alpha_c}(x)-F_{\kappa,\alpha_c}(1/2)) &\geq -\frac{2\log(2)\exp(-\frac{\kappa^2}{2})\sqrt{\frac{2}{\pi}}}{\exp(-\frac{\kappa^2}{2})\sqrt{\frac{2}{\pi}}(\frac{1}{\kappa}-\frac{1}{\kappa^3})-\frac{1}{2}\exp(-\kappa^2)\frac{2}{\pi}(\frac{1}{\kappa}-\frac{1}{\kappa^3})^2}\\
    &= -\frac{2\log(2)}{\frac{1}{\kappa}-\frac{1}{\kappa^3}-\frac{1}{2}\exp(-\frac{\kappa^2}{2})\sqrt{\frac{2}{\pi}}(\frac{1}{\kappa}-\frac{1}{\kappa^3})^2}.
\end{align*}
It is not hard to see that the denominator is minimized at $\kappa=4$, when $\kappa \in [1.5,4]$. This implies that
\begin{align*}
    \frac{\partial}{\partial \kappa}(F_{\kappa,\alpha_c}(x)-F_{\kappa,\alpha_c}(1/2)) \geq -\frac{2\log(2)}{\frac{1}{4}-\frac{1}{4^3}-\frac{1}{2}\exp(-\frac{4^2}{2})\sqrt{\frac{2}{\pi}}(\frac{1}{4}-\frac{1}{4^3})^2} >-6.
\end{align*}
\end{proof}

\subsubsection{Bounds on Special Functions}\label{sss:specialfn}
\textbf{Polynomial bound on $\erf(\frac{x}{\sqrt{2}})$.} We will use $g_u$ and $g_l$ to denote the upper and lower bound respectively. By Taylor expansion, we know that for any $x\geq 0$,
\begin{align*}
    \erf\left(\frac{x}{ \sqrt{2}}\right)&\geq \sqrt{\frac{2}{\pi }} x-\frac{x^3}{3 \sqrt{2 \pi }}+\frac{x^5}{20 \sqrt{2 \pi
   }}-\frac{x^7}{168 \sqrt{2 \pi }}\\
   &\geq 0.797884 x-0.132982 x^3+0.0199470 x^5-0.00237467 x^7:=g_{l,0}(x).
\end{align*}
By Taylor expansion at $1$,
\begin{align*}
    \erf\left(\frac{x}{ \sqrt{2}}\right)
    &\geq \erf\left(\frac{1}{\sqrt{2}}\right)+\sqrt{\frac{2}{e \pi }}
   (x-1)-\frac{(x-1)^2}{\sqrt{2 e \pi }}+\frac{(x-1)^4}{6 \sqrt{2 e \pi
   }}\\
   & \quad -\frac{(x-1)^5}{30 \sqrt{2 e \pi }}-\frac{(x-1)^6}{60 \sqrt{2 e \pi }}+\frac{1}{315}
   \sqrt{\frac{2}{e \pi }} (x-1)^7\\
   &\geq -0.000397705 + 0.801191 x - 0.0120986 x^2 - 0.107544 x^3 - 
 0.0336071 x^4 \\
 & \quad + 0.0483940 x^5 - 0.0147872 x^6 + 0.00153631 x^7:=g_{l,1}(x).
\end{align*}
By Taylor expansion at $2$,
\begin{align*}
   \erf\left(\frac{x}{ \sqrt{2}}\right)
    &\geq \erf\left(\sqrt{2}\right)+\frac{\sqrt{\frac{2}{\pi }}
   (x-2)}{e^2}-\frac{\sqrt{\frac{2}{\pi }} (x-2)^2}{e^2}+\frac{(x-2)^3}{e^2 \sqrt{2 \pi
   }}-\frac{(x-2)^4}{6 \left(e^2 \sqrt{2 \pi }\right)}-\frac{(x-2)^5}{12 \left(e^2
   \sqrt{2 \pi }\right)}\\
   &\geq -0.125320 + 1.11581 x - 0.287952 x^2 - 0.0539910 x^3 \\
  &\quad  +  0.0359939 x^4 - 0.00449925 x^5:=g_{l,2}(x).
\end{align*}
By Taylor expansion at $3$,
\begin{align*}
\erf\left(\frac{x}{ \sqrt{2}}\right)
    &\geq 
    \text{erf}\left(\frac{3}{\sqrt{2}}\right)+\frac{\sqrt{\frac{2}{\pi }}
   (x-3)}{e^{9/2}}-\frac{3 (x-3)^2}{e^{9/2} \sqrt{2 \pi }}+\frac{4 \sqrt{\frac{2}{\pi }}
   (x-3)^3}{3 e^{9/2}}-\frac{3 (x-3)^4}{2 \left(e^{9/2} \sqrt{2 \pi
   }\right)}\\
   & \quad +\frac{(x-3)^5}{2 e^{9/2} \sqrt{2 \pi }}-\frac{(x-3)^6}{20 \left(e^{9/2}
   \sqrt{2 \pi }\right)}-\frac{2 \sqrt{\frac{2}{\pi }} (x-3)^7}{105 e^{9/2}}\\
   &\geq -0.337289 + 1.48466 x - 0.484624 x^2 - 0.0679551 x^3 + 
 0.0897448 x^4 \\
 & \quad - 0.0257048 x^5 + 0.00332388 x^6 - 0.000168833 x^7:=g_{l,3}(x).
\end{align*}
By Taylor expansion at $4$,
\begin{align*}
\erf\left(\frac{x}{ \sqrt{2}}\right)
    &\geq 
    \text{erf}\left(2 \sqrt{2}\right)+\frac{\sqrt{\frac{2}{\pi }} (x-4)}{e^8}-\frac{2
   \sqrt{\frac{2}{\pi }} (x-4)^2}{e^8}+\frac{5 (x-4)^3}{e^8 \sqrt{2 \pi }}-\frac{13
   (x-4)^4}{3 \left(e^8 \sqrt{2 \pi }\right)}+\frac{163 (x-4)^5}{60 e^8 \sqrt{2 \pi }} \\
   & \quad -\frac{37 (x-4)^6}{30 \left(e^8 \sqrt{2 \pi }\right)}+\frac{961 (x-4)^7}{2520 e^8
   \sqrt{2 \pi }}-\frac{59 (x-4)^8}{1008 \left(e^8 \sqrt{2 \pi }\right)}-\frac{223
   (x-4)^9}{20160 \left(e^8 \sqrt{2 \pi }\right)}\\
   &\geq -1.21084 + 3.28149 x - 2.05347 x^2 + 0.676529 x^3 - 0.111157 x^4 + 
 0.00179703 x^5 \\
 &\quad + 0.00285503 x^6 - 0.000550989 x^7 + 
 0.0000454597 x^8 - 0.00000148037 x^9:=g_{l,4}(x).
\end{align*}
Define a function 
\begin{align*}
    g_l(x):=
    \begin{cases}
    g_{l,0}(x), & \quad x\in [0,0.5)\\
    g_{l,1}(x), & \quad x\in [0.5,1.5)\\
    g_{l,2}(x), & \quad x\in [1.5,2.5)\\
    g_{l,3}(x), & \quad x\in [2.5,3.5)\\
    g_{l,4}(x), & \quad x\in [3.5,4.5)\\
    0.99999, & \quad x\in [4.5,\infty)
    \end{cases}
\end{align*}
Therefore $g_l$ is a lower bound of $\erf(x/\sqrt{2})$ when $x\geq 0$. We give an upper bound similarly. By Taylor expansion, we know that for any $x\geq 0$,
\begin{align*}
    \erf\left(\frac{x}{ \sqrt{2}}\right)&\leq \sqrt{\frac{2}{\pi }} x-\frac{x^3}{3 \sqrt{2 \pi }}+\frac{x^5}{20 \sqrt{2 \pi
   }}\\
   &\leq 0.797886 x - 0.132980 x^3 + 0.0199472 x^5:=g_{u,0}(x).
\end{align*}
By Taylor expansion at $1$,
\begin{align*}
    \erf\left(\frac{x}{ \sqrt{2}}\right)
    &\leq \erf\left(\frac{1}{\sqrt{2}}\right)+\sqrt{\frac{2}{e \pi }}
   (x-1)-\frac{(x-1)^2}{\sqrt{2 e \pi }}+\frac{(x-1)^4}{6 \sqrt{2 e \pi
   }}-\frac{(x-1)^5}{30 \sqrt{2 e \pi }}\\
   &\leq 0.00517147 + 0.766242 x + 0.0806570 x^2 - 0.241970 x^3 \\
   &\quad + 0.0806570 x^4 - 0.00806568 x^5:=g_{u,1}(x).
\end{align*}
By Taylor expansion at $2$,
\begin{align*}
    \erf\left(\frac{x}{ \sqrt{2}}\right)
    &\leq \erf\left(\sqrt{2}\right)+\frac{\sqrt{\frac{2}{\pi }}
   (x-2)}{e^2}-\frac{\sqrt{\frac{2}{\pi }} (x-2)^2}{e^2}+\frac{(x-2)^3}{e^2 \sqrt{2 \pi
   }}-\frac{(x-2)^4}{6 \left(e^2 \sqrt{2 \pi }\right)}\\
   & \quad -\frac{(x-2)^5}{12 \left(e^2
   \sqrt{2 \pi }\right)}+\frac{(x-2)^6}{20 e^2 \sqrt{2 \pi }}-\frac{11 (x-2)^7}{2520
   \left(e^2 \sqrt{2 \pi }\right)}\\
   &\leq 0.0776180 + 0.491917 x + 0.518314 x^2 - 0.617896 x^3 + 0.263957 x^4 \\
   &\quad - 
 0.0566904 x^5 + 0.00599901 x^6 - 0.000235674 x^7:=g_{u,2}(x).
\end{align*}
By Taylor expansion at $3$,
\begin{align*}
    \erf\left(\frac{x}{ \sqrt{2}}\right)
    &\leq \erf\left(\frac{3}{\sqrt{2}}\right)+\frac{\sqrt{\frac{2}{\pi }}
   (x-3)}{e^{9/2}}-\frac{3 (x-3)^2}{e^{9/2} \sqrt{2 \pi }}+\frac{4 \sqrt{\frac{2}{\pi }}
   (x-3)^3}{3 e^{9/2}}-\frac{3 (x-3)^4}{2 \left(e^{9/2} \sqrt{2 \pi
   }\right)}+\frac{(x-3)^5}{2 e^{9/2} \sqrt{2 \pi }}\\
   &\leq -0.544982 + 2.02315 x - 1.07693 x^2 + 0.291026 x^3 \\
   & \quad - 0.0398865 x^4 + 
 0.00221593 x^5:=g_{u,3}(x).
\end{align*}
By Taylor expansion at $4$,
\begin{align*}
    \erf\left(\frac{x}{ \sqrt{2}}\right)
    &\leq \erf\left(2 \sqrt{2}\right)+\frac{\sqrt{\frac{2}{\pi }} (x-4)}{e^8}-\frac{2
   \sqrt{\frac{2}{\pi }} (x-4)^2}{e^8}+\frac{5 (x-4)^3}{e^8 \sqrt{2 \pi }}-\frac{13
   (x-4)^4}{3 \left(e^8 \sqrt{2 \pi }\right)}+\frac{163 (x-4)^5}{60 e^8 \sqrt{2 \pi
   }}\\
   &\leq 0.426716 + 0.650505 x - 0.296924 x^2 + 0.0681197 x^3 \\
   & \quad - 0.00785136 x^4 + 0.000363573 x^5:=g_{u,4}(x).
\end{align*}
Define a function 
\begin{align*}
    g_u(x):=
    \begin{cases}
    g_{u,0}(x), & \quad x\in [0,0.5)\\
    g_{u,1}(x), & \quad x\in [0.5,1.5)\\
    g_{u,2}(x), & \quad x\in [1.5,2.5)\\
    g_{u,3}(x), & \quad x\in [2.5,3.5)\\
    g_{u,4}(x), & \quad x\in [3.5,4.5)\\
    1, & \quad x\in [4.5,\infty)
    \end{cases}
\end{align*}
Therefore $g_u$ is an upper bound of $\erf(x/\sqrt{2})$ when $x\geq 0$. Note that both $g_l$ and $g_u$ are polynomials with rational coefficient. We can compute their values exactly on rational points.\\

\noindent\textbf{Polynomial bound on $\exp(-x^2/2)$.} Similarly, we give lower and upper bounds on $\exp(-x^2/2)$. We use $h_l$ and $h_u$ to denote them respectively. By Taylor expansion, we know that
\begin{align*}
    \exp\left(\frac{-x^2}{2}\right) \geq 1-\frac{x^2}{2}+\frac{x^4}{8}-\frac{x^6}{48}:=h_{l,0}(x).
\end{align*}
By Taylor expansion at $1$,
\begin{align*}
\exp\left(\frac{-x^2}{2}\right) &\geq
    \frac{1}{\sqrt{e}}-\frac{x-1}{\sqrt{e}}+\frac{(x-1)^3}{3 \sqrt{e}}-\frac{(x-1)^4}{12
   \sqrt{e}}-\frac{(x-1)^5}{20 \sqrt{e}}\\
   &\geq 0.990666 + 0.0505441 x - 0.606532 x^2 + 0.101087 x^3 \\
   & \quad + 0.101087 x^4 - 
 0.0303266 x^5 :=h_{l,1}(x).
\end{align*}
By Taylor expansion at $2$,
\begin{align*}
\exp\left(\frac{-x^2}{2}\right) &\geq
    \frac{1}{e^2}-\frac{2 (x-2)}{e^2}+\frac{3 (x-2)^2}{2 e^2}-\frac{(x-2)^3}{3 e^2}-\frac{5
   (x-2)^4}{24 e^2}\\
   & \quad +\frac{3 (x-2)^5}{20 e^2}-\frac{11 (x-2)^6}{720 e^2}-\frac{43
   (x-2)^7}{2520 e^2}\\
   &\geq 0.912116 + 0.264655 x - 0.771412 x^2 + 0.0300744 x^3 + 0.291346 x^4 \\
   & \quad - 0.148870 x^5 + 0.0302624 x^6 - 0.00230930 x^7 :=h_{l,2}(x).
\end{align*}
By Taylor expansion at $3$,
\begin{align*}
\exp\left(\frac{-x^2}{2}\right) &\geq
    \frac{1}{e^{9/2}}-\frac{3 (x-3)}{e^{9/2}}+\frac{4 (x-3)^2}{e^{9/2}}-\frac{3
   (x-3)^3}{e^{9/2}}+\frac{5 (x-3)^4}{4 e^{9/2}}-\frac{3 (x-3)^5}{20 e^{9/2}}\\
   & \quad -\frac{2
   (x-3)^6}{15 e^{9/2}}+\frac{11 (x-3)^7}{140 e^{9/2}}-\frac{43 (x-3)^8}{3360
   e^{9/2}}-\frac{(x-3)^9}{224 e^{9/2}}\\
   &\geq -0.00478085 + 2.79830 x - 3.70700 x^2 + 1.82214 x^3 - 0.273560 x^4 - 
 0.101232 x^5 \\
 & \quad + 0.0568409 x^6 - 0.0117836 x^7 + 0.00119685 x^8 - 
 0.0000495938 x^9 :=h_{l,3}(x).
\end{align*}
By Taylor expansion at $4$,
\begin{align*}
\exp\left(\frac{-x^2}{2}\right) &\geq
    \frac{1}{e^8}-\frac{4 (x-4)}{e^8}+\frac{15 (x-4)^2}{2 e^8}-\frac{26 (x-4)^3}{3
   e^8}+\frac{163 (x-4)^4}{24 e^8}-\frac{37 (x-4)^5}{10 e^8}\\
   &\geq 2.08628 - 2.33304 x + 1.05049 x^2 - 0.237956 x^3 \\
   & \quad + 0.0271025 x^4 - 
 0.00124122 x^5 :=h_{l,4}(x).
\end{align*}
Define a function 
\begin{align*}
    h_l(x):=
    \begin{cases}
    h_{l,0}(x), & \quad x\in [0,0.5)\\
    h_{l,1}(x), & \quad x\in [0.5,1.5)\\
    h_{l,2}(x), & \quad x\in [1.5,2.5)\\
    h_{l,3}(x), & \quad x\in [2.5,3.5)\\
    h_{l,4}(x), & \quad x\in [3.5,4.5)\\
    0, & \quad x\in [4.5,\infty)
    \end{cases}
\end{align*}
We further define $h_l(x):=h_l(-x)$ when $x<0$. Then $h_l$ is a lower bound of $\exp(-x^2/2)$. Similar, we construct an upper bound of $\exp(-x^2/2)$. By Taylor expansion, we know that
\begin{align*}
    \exp\left(\frac{-x^2}{2}\right) \leq 1-\frac{x^2}{2}+\frac{x^4}{8}-\frac{x^6}{48}+\frac{x^8}{384}:=h_{u,0}(x).
\end{align*}
By Taylor expansion at $1$,
\begin{align*}
\exp\left(\frac{-x^2}{2}\right) &\leq
    \frac{1}{\sqrt{e}}-\frac{x-1}{\sqrt{e}}+\frac{(x-1)^3}{3 \sqrt{e}}-\frac{(x-1)^4}{12
   \sqrt{e}}-\frac{(x-1)^5}{20 \sqrt{e}}+\frac{(x-1)^6}{45 \sqrt{e}}+\frac{(x-1)^7}{252
   \sqrt{e}}\\
   &\leq 1.00175 - 0.0134784 x - 0.454897 x^2 - 0.0842403 x^3 + 0.219026 x^4 \\
   & \quad -  0.0606530 x^5 - 0.00336960 x^6 + 0.00240688 x^7 :=h_{u,1}(x).
\end{align*}
By Taylor expansion at $2$,
\begin{align*}
\exp\left(\frac{-x^2}{2}\right) &\leq
    \frac{1}{e^2}-\frac{2 (x-2)}{e^2}+\frac{3 (x-2)^2}{2 e^2}-\frac{(x-2)^3}{3 e^2}-\frac{5
   (x-2)^4}{24 e^2}+\frac{3 (x-2)^5}{20 e^2}\\
   &\leq 0.748856 + 0.902236 x - 1.82702 x^2 + 0.992460 x^3 \\
   & \quad - 0.231197 x^4 + 
 0.0203004 x^5 :=h_{u,2}(x).
\end{align*}
By Taylor expansion at $3$,
\begin{align*}
\exp\left(\frac{-x^2}{2}\right) &\leq
    \frac{1}{e^{9/2}}-\frac{3 (x-3)}{e^{9/2}}+\frac{4 (x-3)^2}{e^{9/2}}-\frac{3
   (x-3)^3}{e^{9/2}}+\frac{5 (x-3)^4}{4 e^{9/2}}-\frac{3 (x-3)^5}{20
   e^{9/2}}\\
   &\leq 2.94056 - 3.37435 x + 1.54416 x^2 - 0.349932 x^3 \\
   & \quad + 0.0388816 x^4 - 
 0.00166634 x^5 :=h_{u,3}(x).
\end{align*}
By Taylor expansion at $4$,
\begin{align*}
\exp\left(\frac{-x^2}{2}\right) &\leq
    \frac{1}{e^8}-\frac{4 (x-4)}{e^8}+\frac{15 (x-4)^2}{2 e^8}-\frac{26 (x-4)^3}{3
   e^8}+\frac{163 (x-4)^4}{24 e^8}\\
   & \quad -\frac{37 (x-4)^5}{10 e^8}+\frac{961 (x-4)^6}{720
   e^8}-\frac{59 (x-4)^7}{252 e^8}\\
   &\leq 5.20709 - 7.33592 x + 4.45881 x^2 - 1.51479 x^3 + 0.310495 x^4 \\
   & \quad - 0.0383768 x^5 + 0.00264690 x^6 - 0.0000785408 x^7 :=h_{u,4}(x).
\end{align*}
Define a function 
\begin{align*}
    h_u(x):=
    \begin{cases}
    h_{u,0}(x), & \quad x\in [0,0.5)\\
    h_{u,1}(x), & \quad x\in [0.5,1.5)\\
    h_{u,2}(x), & \quad x\in [1.5,2.5)\\
    h_{u,3}(x), & \quad x\in [2.5,3.5)\\
    h_{u,4}(x), & \quad x\in [3.5,4.5)\\
    0.00005, & \quad x\in [4.5,\infty)
    \end{cases}
\end{align*}
We further define $h_u(x):=h_u(-x)$ when $x<0$. Then $h_u$ is an upper bound of $\exp(-x^2/2)$. Note that both $h_l$ and $h_u$ are polynomials with rational coefficient. We can compute their values exactly on rational points.\\

\noindent\textbf{Bound on $\sqrt{x}$.} Note that for any rational $x>0$, we can compute $\sqrt{x}$ with error smaller than $10^{-6}$. Define $\mathcal{S}_l(x):=10^{-6} z$, where $z$ is the maximum in $\{z\in \mathbb{Z},z^2\leq 10^{12}x\}$. In other words, $\mathcal{S}_l(x)$ is a lower bound of $\sqrt{x}$ up to $10^{-6}$. We define similarly $\mathcal{S}_u(x):=10^{-6} z$, where $z$ is the minimum in $\{z\in \mathbb{Z},z^2\geq 10^{12}x\}$.\\

\noindent\textbf{Bound on $q_\kappa(x)$.}
Note that 
\begin{align*}
    q_\kappa(x)
    =\frac{1}{\sqrt{2\pi}} \int_{-\kappa}^{\kappa} \left(\erf\left(\frac{\kappa+(1-2x)y}{2\sqrt{2x(1-x)}}\right)-\erf\left(\frac{-\kappa+(1-2x)y}{2\sqrt{2x(1-x)}}\right)\right)\exp\left(-\frac{y^2}{2}\right) dy.
\end{align*}
If we define
\begin{align*}
    f(x)=\left(\erf\left(\frac{\kappa+(1-2x)y}{2\sqrt{2x(1-x)}}\right)-\erf\left(\frac{-\kappa+(1-2x)y}{2\sqrt{2x(1-x)}}\right)\right)\exp\left(-\frac{y^2}{2}\right),
\end{align*}
then we have
\begin{align*}
    f'(y) & =  e^{-\frac{y^2}{2}} \left(\frac{(1-2 x) e^{-\frac{(\kappa+(1-2 x) y)^2}{8 (1-x) x}}}{\sqrt{2
   \pi } \sqrt{(1-x) x}}+\frac{(2 x-1) e^{-\frac{(\kappa-(1-2 x) y)^2}{8 (1-x) x}}}{\sqrt{2
   \pi } \sqrt{(1-x) x}}\right)\\
   &-e^{-\frac{y^2}{2}} y \left(\erf\left(\frac{\kappa-(1-2
   x) y}{2 \sqrt{2} \sqrt{(1-x) x}}\right)+\erf\left(\frac{\kappa+(1-2 x) y}{2 \sqrt{2}
   \sqrt{(1-x) x}}\right)\right).
\end{align*}
and
\begin{align*}
    &f''(y)=e^{-\frac{y^2}{2}} (y^2-1) \left(\erf\left(\frac{\kappa-(1-2 x) y}{2 \sqrt{2} \sqrt{(1-x)
   x}}\right)+\erf\left(\frac{\kappa+(1-2 x) y}{2 \sqrt{2} \sqrt{(1-x)
   x}}\right)\right)\\
   &-2 e^{-\frac{y^2}{2}} y \left(\frac{(1-2 x)
   e^{-\frac{(\kappa+(1-2 x) y)^2}{8 (1-x) x}}}{\sqrt{2 \pi } \sqrt{(1-x) x}}+\frac{(2 x-1)
   e^{-\frac{(\kappa-(1-2 x) y)^2}{8 (1-x) x}}}{\sqrt{2 \pi } \sqrt{(1-x)
   x}}\right)\\
   &+e^{-\frac{y^2}{2}} \left(-\frac{(1-2 x)^2 e^{-\frac{(\kappa+(1-2 x) y)^2}{8
   (1-x) x}} (\kappa+(1-2 x) y)}{4 \sqrt{2 \pi } (1-x) x \sqrt{(1-x) x}}-\frac{(2 x-1)^2
   e^{-\frac{(\kappa-(1-2 x) y)^2}{8 (1-x) x}} (\kappa-(1-2 x) y)}{4 \sqrt{2 \pi } (1-x) x
   \sqrt{(1-x) x}}\right).
\end{align*}
This implies that for any $x \in [0.005,0.995]$ and $y\in [-\kappa,\kappa]$,
\begin{align*}
    &f''(y)\\
    &\geq -2-\frac{e^{-\frac{y^2}{2}}(1-2 x)^2}{4 \sqrt{2 \pi } (1-x) x \sqrt{(1-x) x}}\left( e^{-\frac{(\kappa+(1-2 x) y)^2}{8
   (1-x) x}} (\kappa+(1-2 x) y) +e^{-\frac{(\kappa-(1-2 x) y)^2}{8 (1-x) x}} (\kappa-(1-2 x) y)\right)\\
   &\geq -2 -\frac{\kappa e^{-\frac{y^2}{2}}(1-2 x)^2}{4 \sqrt{2 \pi } (1-x) x \sqrt{(1-x) x}}\left( e^{-\frac{(\kappa+(1-2 x) y)^2}{8
   (1-x) x}} +e^{-\frac{(\kappa-(1-2 x) y)^2}{8 (1-x) x}}\right)\\
   &\geq -2 - \frac{2e^{-\frac{\kappa^2}{2}}\kappa (1-2 x)^2}{4 \sqrt{2 \pi } (1-x) x \sqrt{(1-x) x}} \geq -600 \kappa e^{-\frac{\kappa^2}{2}}.
\end{align*}
Take $\delta=0.001$. This implies that
\begin{align*}
    &q_\kappa(x)\\
    &\leq \sum_{y\in T_{\delta}} \delta f(y) + 100\kappa^2 \exp(-\kappa^2/2)\delta^2\\
    &\leq \sum_{y\in T_{\delta}} \delta \left(\erf\left(\frac{1}{\sqrt{2}}\mathcal{S}_u\left(\frac{(\kappa+(1-2x)y)^2}{4x(1-x)}\right)\right)+\erf\left(\frac{1}{\sqrt{2}}\mathcal{S}_u\left(\frac{(\kappa-(1-2x)y)^2}{4x(1-x)}\right)\right)\right)\exp(-\frac{y^2}{2})\\
    &\quad \quad + 100\kappa^2 \exp(-\kappa^2/2)\delta^2\\
    &\leq \sum_{y\in T_{\delta}} \delta  \left(g_u\left(\mathcal{S}_u\left(\frac{(\kappa+(1-2x)y)^2}{4x(1-x)}\right)\right)+g_u\left(\mathcal{S}_u\left(\frac{(\kappa-(1-2x)y)^2}{4x(1-x)}\right)\right)\right)h_u(y)\\
    &\quad \quad + 100\kappa^2 h_u(\kappa)\delta^2:=q_{\kappa,u}(x),
\end{align*}
where $\sum_{y\in T_{\delta}}$ means that we apply the Trapezoidal rule with step size $\delta$. Note that $q_{\kappa,u}(x)$ can be evaluated exactly on rational points. \\

\noindent\textbf{Bound on $\log(x)$.}
We use $\mathcal{G}_l$ and $\mathcal{G}_u$ to denote the upper and lower bound respectively. By Taylor expansion, for $x\in [0.5,1]$,
\begin{align*}
    \log(x)&\geq \sum_{i=1}^{15} \frac{(-1)^{i+1}}{i}(x-1)^i:=\mathcal{G}_{l,0}(x)
\end{align*}
and
\begin{align*}
    \log(x)&\leq \sum_{i=1}^{16} \frac{(-1)^{i+1}}{i}(x-1)^i:=\mathcal{G}_{u,0}(x).
\end{align*}
From this for any $x>1$, let $a$ be the smallest non-negative integer such that $x^{1/2^a}\in [1,2]$. Note that $x^{1/2^a}=\sqrt{x}^{(a)}$, where the notation means that we apply square root $a$ times. Therefore, we have that
\begin{align*}
    &\log(x)=-2^a \log(x^{\frac{1}{-2^a}}) =-2^a \log(1/\sqrt{x}^{(a)})\geq -2^a \mathcal{G}_{u,0}(1/\mathcal{S}_l^{(a)}(x)):=\mathcal{G}_{l}(x)\\
    &\log(x)=-2^a \log(x^{\frac{1}{-2^a}}) =-2^a \log(1/\sqrt{x}^{(a)})\leq -2^a \mathcal{G}_{l,0}(1/\mathcal{S}_u^{(a)}(x)):=\mathcal{G}_{u}(x).
\end{align*}
Similarly, for any $0<x<1$, let $b$ be the smallest non-negative integer such that $x^{-1/2^b}\in [1,2]$. Then
\begin{align*}
    &\log(x)=2^b \log(x^{\frac{1}{2^b}}) =2^b \log(\sqrt{x}^{(b)})\geq 2^b \mathcal{G}_{l,0}(\mathcal{S}_l^{(a)}(x)):=\mathcal{G}_{l}(x)\\
    &\log(x)=2^b \log(x^{\frac{1}{2^b}}) =2^b \log(\sqrt{x}^{(b)})\leq 2^b \mathcal{G}_{u,0}(\mathcal{S}_u^{(a)}(x)):=\mathcal{G}_{u}(x).
\end{align*}
Note that $\mathcal{G}_{u}$ and $\mathcal{G}_{l}$ can be evaluated exactly on rational points.

\subsubsection{Bound on $\beta$}
By Lemma \ref{h:1}, for any $0<\alpha<\alpha_c(\kappa)$, we know that the second derivative satisfies
\begin{align*}
    F''_{\kappa,\alpha}\left(\frac{1}{2}\right)<0.
\end{align*}
This is equivalent to
\begin{align}
    \frac{\pi P_\kappa^2}{\alpha2 \kappa^2 \exp(-\kappa^2)}>1. \label{eq:beta}
\end{align}
Note that
\begin{align*}
    \beta^2=\frac{\alpha}{4}(1-\mu_{2,\kappa})=\frac{\alpha}{4}\frac{2\exp(-\kappa^2)\kappa^2}{\pi P_\kappa^2}.
\end{align*}
As $\beta<0$, it is easy to see that $\beta>-1/2$ is equivalent to \eqref{eq:beta}. Therefore, $-1/2<\beta<0$.

\printbibliography
\end{document}